\def \Vol{{\rm{Vol}}}
\def \diam{{\rm{diam}}}
\def \dist{{\rm{dist}}}
\def \Ric{{\rm{Ric}}}
\newtheorem{theorem}{Theorem}[section]
\newtheorem{lemma}[theorem]{Lemma}
\theoremstyle{definition}
\newtheorem{conjecture}[theorem]{Conjecture}
\newtheorem{corollary}[theorem]{Corollary}
\newtheorem{proposition}[theorem]{Proposition}
\theoremstyle{remark}
\newtheorem{remark}[theorem]{Remark}
\numberwithin{equation}{section}
\begin{document}

\title{Bergman kernel along the K\"{a}hler Ricci flow and Tian's Conjecture}

\author{Wenshuai Jiang}
    \address{BICMR, Peking University,Yiheyuan Road 5, Beijing, 100871, China}
    \email{jiangwenshuai@pku.edu.cn}

\keywords{K\"{a}hler Ricci flow, scalar curvature, Bergman kernel, partial $C^0$-estimate}

\begin{abstract}
In this paper, we study the behavior of Bergman kernels along the K\"{a}hler Ricci flow on Fano manifolds. We show that the Bergman kernels
 are equivalent along the K\"{a}hler Ricci flow under certain condition on the Ricci curvature of the initial metric. Then, using a recent work of Tian and Zhang,
 we can solve a conjecture of Tian for Fano manifolds of complex dimension $\le 3$.
\end{abstract}

\maketitle

\section{Introduction}
A Fano manifold is a compact K\"{a}hler manifold with positive first Chern
class. It has been one of main problems in K\"{a}hler geometry to study if a
Fano manifold admits a K\"{a}hler-Einstein metric since the Calabi-Yau
theorem on Ricci-flat K\"{a}hler metrics in 70's and the Aubin-Yau theorem
on K\"{a}hler-Einstein metrics with negative scalar curvature. This problem is more difficult
because there are new obstructions to the existence. It was
conjectured that the existence of K\"{a}hler-Einstein metrics on M is equivalent to
the K-stability, the Yau-Tian-Donaldson conjecture in the case of Fano manifolds.
\begin{theorem}
Let $M$ be a Fano manifold without non-zero holomorphic vector fields, then $M$ admits a K\"{a}hler-Einstein metric
if and only if it is K-stable.
\end{theorem}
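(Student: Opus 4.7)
The plan is to establish the hard direction (K-stability implies the existence of a K\"ahler-Einstein metric), since the reverse implication is by now classical via the Bando-Mabuchi uniqueness theorem together with the sign computation of the Donaldson-Futaki invariant. I would run the normalized K\"ahler-Ricci flow $\partial_t \omega(t) = -\Ric(\omega(t)) + \omega(t)$ starting from a reference metric $\omega_0 \in 2\pi c_1(M)$ satisfying the Ricci hypothesis of the paper. By Perelman's deep estimates together with Sesum-Tian, the flow exists for all time with uniformly bounded scalar curvature, diameter and Ricci potential, so sequences $(M,\omega(t_j))$ subconverge in the Gromov-Hausdorff topology to a compact metric space $(X_\infty, d_\infty)$.

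The central analytic input is the \emph{partial $C^0$-estimate} along the flow: a uniform lower bound $\inf_M \rho_{k,t} \ge c(k) > 0$ for the Bergman kernel of $(M,\omega(t))$ with respect to a fixed large power $k$ of $-K_M$. This is exactly what the Bergman kernel equivalence of the present paper, combined with the work of Tian-Zhang, is designed to deliver: equivalence along the flow transfers the partial $C^0$-estimate from $t=0$ to all $t$, and at $t=0$ it holds by direct H\"ormander $L^2$-estimates on the fixed smooth metric $\omega_0$. Once in hand, the partial $C^0$-estimate produces, for $k \gg 1$, uniform embeddings $\Phi_{k,t}\colon M \hookrightarrow \mathbb{CP}^{N_k}$ by orthonormal pluri-anticanonical sections, and the images $\Phi_{k,t}(M)$ converge in the Chow variety to a normal $\mathbb{Q}$-Fano variety $X_\infty$ carrying a weak K\"ahler-Einstein current (Donaldson-Sun, Tian).

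The family $\{\Phi_{k,t}(M)\}$ is naturally realized as a test configuration for $M$ with central fibre $X_\infty$; its Donaldson-Futaki invariant is shown, by an argument going back to Tian and made rigorous by Li-Xu, to be non-positive, with equality iff $X_\infty \cong M$. K-stability then forces the biholomorphism $X_\infty \cong M$, whence the weak K\"ahler-Einstein current pulls back to a smooth K\"ahler-Einstein metric on $M$ by regularity theory for degenerate complex Monge-Amp\`ere equations (Ko\l odziej, Eyssidieux-Guedj-Zeriahi). The absence of non-zero holomorphic vector fields excludes the jumping phenomenon and upgrades subsequential convergence to smooth convergence of the entire flow.

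The principal obstacle I expect is precisely the partial $C^0$-estimate along the flow: Perelman-type estimates alone do not control Bergman kernels, and one needs a quantitative comparison that is preserved under the Ricci evolution, which is the technical core of the paper. A secondary difficulty is to ensure that the candidate degeneration extracted from the flow is a \emph{special} test configuration in the sense of Li-Xu, so that ordinary K-stability (rather than the stronger uniform or polystable variants) is sufficient to conclude $X_\infty \cong M$.
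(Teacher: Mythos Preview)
The paper does not prove this theorem; it is stated as background, with the necessary direction attributed to Tian and the sufficient direction to Tian \cite{T3} and to Chen--Donaldson--Sun \cite{CDS1,CDS2,CDS3}. Both of those proofs proceed via a continuity path of \emph{conic} K\"ahler--Einstein metrics, not via the K\"ahler--Ricci flow. The paper's own contribution is the partial $C^0$-estimate (Conjecture~\ref{conj}) in complex dimension $\le 3$, which is then fed into Aubin's smooth continuity method (see the Corollary following Theorem~\ref{thm23} and the reference to \cite{Zha}) rather than into a flow argument.

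Your proposal follows a genuinely different route and has concrete gaps relative to what the paper actually supplies. First, the Bergman-kernel equivalence established here (Theorem~\ref{Beq}) compares $\rho_{\omega,l}$ with $\rho_{\omega_s,l}$ only for $s\in[\tfrac12,1]$; it does not propagate a lower bound from $t=0$ to all $t\to\infty$, so your ``transfer'' step fails as stated. Second, the uniform lower bound along the flow that the paper does obtain (Theorem~\ref{bergman}) rests on the Tian--Zhang $L^4$ Ricci estimate and the attendant structure theory for Gromov--Hausdorff limits, which in this paper is available only for $m=2,3$; nothing here yields the partial $C^0$-estimate along the flow in arbitrary dimension. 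Third, even granting that estimate, extracting from the flow limit a \emph{test configuration} and showing its Donaldson--Futaki invariant is non-positive is a separate and substantial program not addressed in this paper. In short, you are sketching an alternative strategy that the paper neither pursues nor equips you to carry out beyond dimension $3$; the theorem as stated is simply cited.
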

The necessary part of this theorem is proved by Tian in \cite{T2}. Last Fall, Tian gave a
proof for the sufficient part (see \cite{T3}) by establishing the partial $C^0$-estimate for conic K\"{a}hler-
Einstein metrics. Another proof for the sufficient part was given in \cite{CDS1,CDS2,CDS3}

An older approach for solving the conjecture is to solve the following complex Monge-Amper\'{e} equations by the continuity method:
\begin{equation}\label{cma}
(\omega+\sqrt{-1}\partial\overline{\partial}\varphi)^m\,=\,e^{h-t\varphi}\,\omega^m,~~~~\omega + \sqrt{-1}\partial\overline{\partial}\varphi\,>\,0,
\end{equation}
where $\omega$ is a given K\"{a}hler metric with its K\"{a}hler class $[\omega] = 2\pi c_1(M)$ and $h$ is uniquely determined by
\begin{equation*}
\Ric(\omega)\,-\,\omega\,=\,\sqrt{-1}\partial\overline{\partial}h,~~~~\int_M(e^h-1)\omega^m\,=\,0.
\end{equation*}
Let $I$ be the set of t for which (\ref{cma}) is solvable. Then we have known: (1) By
the well-known Calabi-Yau theorem, $I$ is non-empty; (2) In 1983, Aubin proved
that $I$ is open; (3) If we can have an a prior $C^0$-estimate for the solutions
of (\ref{cma}), then $I$ is closed and consequently, there is a K\"{a}hler-Einstein metric on
$M$.

However, the $C^0$-estimate does not hold in general since there are many
Fano manifolds which do not admit any K\"{a}hler-Einstein metrics. The existence
of K\"{a}hler-Einstein metrics required certain geometric stability on the underlying
Fano manifolds. In 90's, Tian proposed a program towards establishing
the existence of K\"{a}hler-Einstein metrics. The key technical ingredient of this
program is a partial $C^0$-estimate conjecture( Conjecture \ref{conj}). Tian affirmed that if one can prove this conjecture( Conjecture \ref{conj})
for the solutions of (\ref{cma}), then one can use the K-stability to derive the a prior
$C^0$-estimate for the solutions of (\ref{cma}), and consequently, the existence of K\"{a}hler-Einstein metrics.
In this paper, we will solve Tian's partial $C^0$-estimate conjecture for complex dimension $\le 3$.

Let $(M,\omega)$ be a Fano manifold and $K_M^{-1}$ be its anti-canonical bundle. Choose a hermitian metric $H_{\omega}$ with $\omega$ as its curvature form
and any orthonormal basis $\{S_i\}_{1\le i\le N}$ of $H^0(M,K^{-l}_M)$,
with respect to the induced inner product induced by $H^{\otimes l}_{\omega}$ and $\omega$, where
$N=\dim H^0(M,K^{-l}_M)$. Then, following \cite{T3}, we define the Bergman kernel by
\begin{equation}
\rho_{\omega,l}(x)\,=\,\sum_{i=1}^N||S_i||_{H^{\otimes l}_{\omega}}^2(x).
\end{equation}
This is independent of the choice of $H_{\omega}$ and the orthonormal basis $\{S_i\}.$

\begin{remark}\label{rhoremark}
Denote
$$||S||^2_{{H^{\otimes l}_{\omega}},\omega}\,=\,\int_M||S||^2_{H^{\otimes l}_{\omega}}(x) d\mu(x)$$
and
$$\eta_\omega(x)\,=\,\sup_{||S||^2_{{H^{\otimes l}_{\omega}},\omega}=1}||S||^2_{H^{\otimes l}_{\omega}}(x),$$
then we have the following inequalities:
\begin{equation}
\frac{1}{N}\rho_{\omega,l}(x)\le \eta_\omega(x)\le \rho_{\omega,l}(x).
\end{equation}
\end{remark}

Denote by $\mathcal {K}(R_0,V_0,m)$ the set of all compact K\"ahler manifolds $(M,\omega)$ of complex dimension $m$ such that $[\omega] = 2\pi c_1(M)$,
$\Ric (\omega)\ge R_0>0$ and $\mbox{\Vol}(M,\omega)\ge V_0>0$. In 1990, Tian proposed the following conjecture:
\begin{conjecture} [Tian \cite{T4,Ti10}]\label{conj}
For each $(M,\omega)\in \mathcal{K}(R_0,V_0,m)$, there are uniform constants $c_k=c(m,k,R_0,V_0)>0$ ($k\ge 1$) and $l_i \to \infty$ with $i \ge 0$ and $l_0=l_0(m)$,
such that for all $l=l_i$, we have
\begin{equation*}
\rho_{\omega,l}\,>\,c_l\,>\,0.
\end{equation*}
\end{conjecture}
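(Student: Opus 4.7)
The plan is to exploit the normalized K\"ahler-Ricci flow
\begin{equation*}
\partial_t \omega(t) \,=\, -\Ric(\omega(t)) \,+\, \omega(t), \qquad \omega(0) \,=\, \omega,
\end{equation*}
which keeps the K\"ahler class $2\pi c_1(M)$ fixed so that $\rho_{\omega(t),l}$ is well-defined for every $t \ge 0$ and every admissible level $l$. My approach would be by contradiction: given a sequence $(M_i,\omega_i) \in \mathcal{K}(R_0,V_0,m)$ with $m \le 3$ that violates the asserted lower bound at some levels $l_i \to \infty$, I would flow each $\omega_i$ forward in time and use a two-step reduction, transferring a partial $C^0$-estimate from positive time back to time zero.

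The first and main step is to establish a \emph{Bergman kernel equivalence along the flow}: there exist $t_0 = t_0(m,R_0,V_0) > 0$ and $C_j = C_j(m,R_0,V_0,l) > 0$ with
\begin{equation*}
C_1 \, \rho_{\omega(0),l}(x) \,\le\, \rho_{\omega(t),l}(x) \,\le\, C_2 \, \rho_{\omega(0),l}(x), \qquad x \in M,\ t \in [0,t_0].
\end{equation*}
Writing $\omega(t) = \omega + \sqrt{-1}\partial\overline{\partial}\varphi_t$ and $H_{\omega(t)} = e^{-\varphi_t}H_{\omega(0)}$, the two pointwise hermitian norms differ by the factor $e^{-l\varphi_t}$, while the induced $L^2$ inner products on $H^0(M,K_M^{-l})$ differ by the ratio of volume forms $\omega(t)^m/\omega^m$. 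Using $\Ric(\omega) \ge R_0 > 0$ and $\Vol(M,\omega) \ge V_0$ at $t=0$, which supply non-collapsing and a uniform Sobolev constant at the initial time, together with Perelman's entropy monotonicity and his scalar curvature bound along the flow, I would run parabolic Moser iteration on the evolution equations for $\varphi_t$ and $\dot\varphi_t$ to obtain uniform $C^0$-control over $[0,t_0]$ depending only on $R_0,V_0,m$; the pointwise and $L^2$ comparisons then combine into the displayed two-sided bound.

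The second step is to invoke the recent Tian-Zhang theory for the K\"ahler-Ricci flow in complex dimension $\le 3$ to secure a partial $C^0$-estimate at a fixed positive time: for a suitable sequence $l_i \to \infty$ and some $t_1 \in (0,t_0]$, one obtains $c_l > 0$ depending only on $m,R_0,V_0,t_1,l$ with $\rho_{\omega(t_1),l} \ge c_l$ on $M$. Specialising the first step at $t = t_1$ yields $\rho_{\omega(0),l} \ge C_1\, c_l$, contradicting the choice of the violating sequence and thereby proving Conjecture \ref{conj} for $m \le 3$.

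The hard part will be the equivalence in the first step. A bare maximum-principle estimate on $\varphi_t$ is not uniform in $(M,\omega)$, so one needs $C^0$-bounds on both $\varphi_t$ and the Ricci potential $\dot\varphi_t$ that genuinely depend only on $R_0,V_0,m$. This forces a careful interplay between the initial Ricci hypothesis $\Ric(\omega) \ge R_0$ and Perelman-type no-local-collapsing along the flow, together with precise tracking of how the Bergman density transforms under a K\"ahler potential change. The dimension restriction $m \le 3$ plays no role in this comparison; it enters only through Tian-Zhang in the second step.
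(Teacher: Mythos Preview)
Your two-step architecture---prove a Bergman kernel equivalence along the normalized K\"ahler--Ricci flow, then invoke Tian--Zhang at a fixed positive time and pull back---is exactly the paper's strategy, and your final paragraph correctly identifies where the dimension restriction enters. The gap is in the mechanism you propose for the first step.

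You suggest running parabolic Moser iteration on the evolution equations for $\varphi_t$ and $\dot\varphi_t$. The equation for $\varphi_t$ is the fully nonlinear Monge--Amp\`ere flow, so Moser iteration does not apply to it directly. The linearized equation $(\partial_s-\Delta_s)\dot f_s=\dot f_s$ \emph{is} amenable to the paper's parabolic Moser machine (Theorem~\ref{thm1}), but that outputs $\sup|\dot f_s|\le C s^{-(n_0+2)/(2p)}\|\dot f\|_{L^p}$, and integrating this to control $f_s$ requires $p>(n+2)/2$. In complex dimension~$1$ such high $L^p$ control on $\dot f_s=c_s-h_{\omega_s}$ is available because the Green function is only logarithmically singular (Lemma~\ref{lpe}); in real dimension $n\ge 4$ the Green function behaves like $d(x,y)^{2-n}$ and one only gets $h_{\omega_s}\in L^p$ for $p<n/(n-2)$, which is never large enough. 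The paper flags this explicitly at the opening of Section~\ref{secPCEH}: the dimension-$1$ estimates ``mostly depend on the property of dimension~$1$ which we can't extend to high dimension.'' Also, invoking ``Perelman's scalar curvature bound'' as a black box is not legitimate here: in its standard form it depends on $\|h_\omega\|_{C^1}$ and $\sup|R(0)|$, neither of which is uniform over $\mathcal{K}(R_0,V_0,m)$.

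What the paper does instead for $m\ge 2$ bypasses Moser on the potential entirely. An upper bound $h_\omega\le C$ comes from \emph{elliptic} Moser on $e^{h_\omega}$ at the initial time (Lemma~\ref{mhwu}); the lower bound $f_s\ge C(1-e^s)$ then follows by the maximum principle applied directly to the Monge--Amp\`ere flow (Lemma~\ref{mfsl}); the upper bound $f_s\le Ce^s$ comes from Green's formula at time~$0$ combined with Jensen's inequality on the mean of $f_s$ (Lemma~\ref{mfsu}). Parabolic Moser (Theorem~\ref{thm1}) \emph{is} used, but on $R+|\nabla u|^2$ rather than on the K\"ahler potential: this is Theorem~\ref{main}, which produces a scalar curvature bound at positive time depending only on $R_0,V_0,d,m$ (with a $t^{-(n+2)/2}$ blow-up), and this is what supplies the uniform Sobolev inequality for the peak-section argument in Theorem~\ref{Beq} and the curvature input for Tian--Zhang.
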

\begin{remark}
Tian also mentioned a stronger version of Conjecture \ref{conj}: There are uniform constants $c_k=c(m,k,R_0,V_0)>0$ for $k\ge 0$ and $l_0=l_0(m)$, such that
for all $l\ge l_0$, we have $\rho_{\omega,l}>c_l>0$.
\end{remark}
In this paper, using recent regularity theory developed by Tian and Zhang \cite{TZha1} for K\"{a}hler Ricci flow, we solve Tian's conjecture for complex dimensions $2$ and $3$:
\begin{theorem}\label{thm23}
Let $(M,\omega)$ be a compact K\"{a}hler manifold of complex dimension $2$ or $3$ and. We further assume a positive Ricci curvature lower bound
$\Ric(\omega)\ge R_0>0$ and a volume lower bound $V\ge V_0>0$.
Then there are uniform constants $c_k=c(m,k,R_0,V_0)>0$ for all $k\ge 1$ and $l_i \to \infty$ with $i \ge 0$ and $l_0=l_0(m, R_0, V_0)$,
such that for all $l=l_i$, we have
\begin{equation*}
\rho_{\omega,l}>c_l>0.
\end{equation*}
\end{theorem}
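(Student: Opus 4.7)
The strategy is to exploit the normalized K\"ahler-Ricci flow
\[
\partial_t \omega_t \,=\, -\Ric(\omega_t) + \omega_t,\qquad \omega_0\,=\,\omega,
\]
as a smoothing and comparison tool: run the flow for a short definite time $t_0=t_0(m,R_0,V_0)$, establish the partial $C^0$-estimate at the smoother metric $\omega_{t_0}$, and then pull it back to the initial $\omega$ via an equivalence of Bergman kernels along the flow.

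First I would use the initial Ricci lower bound $\Ric(\omega)\ge R_0>0$ together with Perelman's estimates and the Tian-Zhang regularity theory for K\"ahler-Ricci flow to secure, on $[0,t_0]$, uniform scalar curvature and non-collapsing bounds and a quantitative Ricci lower bound at time $t_0$ depending only on $m,R_0,V_0$. These bounds are exactly what is needed to run Cheeger-Colding theory on $\omega_{t_0}$: in complex dimension $m\le 3$ the structure of non-collapsed Gromov-Hausdorff limits is strong enough (singular set of real codimension at least $4$, good tangent cones on a generic stratum) that the Donaldson-Sun/Tian construction of peak holomorphic sections applies. A standard contradiction argument - take a sequence in $\mathcal{K}(R_0,V_0,m)$ on which $\rho_{\omega_{t_0},l}$ fails to be bounded below, pass to a Gromov-Hausdorff limit of the flows at time $t_0$, and concentrate an $L^2$ holomorphic section at a given limit point via the H\"ormander $\overline{\partial}$-estimate - yields a uniform bound $\rho_{\omega_{t_0},l}\ge c_l'>0$ for $l$ in a sequence $l_i\to\infty$.

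The heart of the paper is the equivalence of Bergman kernels along the flow: there should be constants $C_l=C(m,R_0,V_0,l)>0$ such that
\[
C_l^{-1}\,\rho_{\omega,l}(x)\,\le\,\rho_{\omega_t,l}(x)\,\le\,C_l\,\rho_{\omega,l}(x)
\]
for all $x\in M$ and $t\in[0,t_0]$. Writing $\omega_t=\omega+\sqrt{-1}\partial\overline{\partial}\varphi_t$, the hermitian metrics on $K_M^{-l}$ satisfy $H_{\omega_t}^{\otimes l}=e^{-l\varphi_t}H_{\omega}^{\otimes l}$ up to a time-dependent constant, so the comparison reduces to uniform control of $\varphi_t$ and of the volume ratio $\omega_t^m/\omega^m$ in terms of $m,R_0,V_0$ only. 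Using the extremal characterization $\eta_\omega(x)=\sup_{\|S\|^2_{H_\omega^{\otimes l},\omega}=1}\|S\|^2_{H_\omega^{\otimes l}}(x)$ of Remark~\ref{rhoremark}, both inequalities follow once these pointwise bounds are in place: a test section admissible for $\eta_{\omega_t}$ at $x$ is, after rescaling, admissible for $\eta_\omega$ up to a controlled factor, and conversely.

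The main obstacle will be producing $C^0$-control of $\varphi_t$ and of the Monge-Amp\`ere density $\omega_t^m/\omega^m$ that depends only on the Ricci lower bound and volume, without invoking any a priori $C^0$-bound on the flow (which is essentially what we are trying to establish). This is where Tian-Zhang's parabolic estimates are essential: combined with the persistence of a positive Ricci lower bound on $[0,t_0]$, they furnish precisely the required oscillation bounds for $\varphi_t$ without appealing to diameter or injectivity radius control. Once this step is in place, the dimension restriction $m\le 3$ enters only through the limit step, where the sharp structure of tangent cones in low dimension is needed to implement the peak-section construction.
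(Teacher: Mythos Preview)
Your overall architecture is exactly the paper's: run the flow, prove the Bergman kernels $\rho_{\omega,l}$ and $\rho_{\omega_t,l}$ are uniformly equivalent (Theorem~\ref{Beq}), and then establish the partial $C^0$-estimate at the smoother time via a Gromov--Hausdorff limit and peak sections (Theorem~\ref{bergman}). Two of the implementation steps, however, are genuinely off.

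First, you invoke ``a quantitative Ricci lower bound at time $t_0$'' and ``persistence of a positive Ricci lower bound on $[0,t_0]$''. Neither is available: pointwise Ricci lower bounds are not preserved under the K\"ahler--Ricci flow in complex dimension $\ge 2$, and the paper never claims this. What the paper actually secures at time $t\in[\tfrac12,1]$ is Tian--Zhang's integral bound $\int_M|\Ric(\omega_t)|^4\,\omega_t^m\le C$ (Lemma~\ref{L4estm}), and it is this $L^4$ control, fed into the Petersen--Wei/Cheeger--Colding machinery for $L^p$ Ricci with $p>m$, that yields the structure of the limit space and the smooth convergence on the regular part. This is also precisely where the dimension restriction enters: one needs $4>m$, hence $m\le 3$; it is not a statement about tangent cones being special in low dimension.

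Second, the oscillation control of the K\"ahler potential does not come from Tian--Zhang's estimates or from any Ricci bound at positive time. The paper bounds $f_s$ directly from the initial data: $h_\omega\le C$ by Moser iteration (Lemma~\ref{mhwu}), then $f_s\ge C(1-e^s)$ by the maximum principle (Lemma~\ref{mfsl}), and $f_s\le Ce^s$ via the Green's function of $(M,\omega)$ and a bound on $\tfrac{1}{V}\int_M f_s\,\omega^m$ (Lemma~\ref{mfsu}). The Hermitian-metric equivalence then follows from the identity $H_{\omega_s}/H_\omega=e^{c_s-f_s}$ with $|c_s|\le C(e^s-1)$ (Lemma~\ref{csb}). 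The parabolic Moser iteration (Theorem~\ref{thm1}) is used to bound $|R|$ and $|\nabla u|^2$ at positive time (Theorem~\ref{main}), which in turn feeds Perelman's diameter control and the $L^4$ Ricci estimate, but it is not what produces the $C^0$ bound on $f_s$.
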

Following Tian's approach, we can prove(see also \cite{Zha})
\begin{corollary}
The Yau-Tian-Donaldson conjecture holds for complex dimension $\le 3.$
\end{corollary}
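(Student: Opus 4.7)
The plan is to deduce the corollary from Theorem \ref{thm23} by running the continuity method on equation (\ref{cma}) and then plugging the resulting partial $C^0$-estimate into Tian's standard K-stability machinery from \cite{T3}. Assume $M$ is K-stable (the other direction is already in \cite{T2}). Let $I\subset[0,1]$ be the set of parameters for which (\ref{cma}) is solvable; by Calabi--Yau and Aubin, $I$ is non-empty and open, so it suffices to show it is closed. Suppose for contradiction that $t_\infty=\sup I<1$ and that $\{\varphi_{t_i}\}$ with $t_i\nearrow t_\infty$ develops no uniform $C^0$-bound.

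The key geometric input is that, along the continuity path, the associated K\"ahler metric $\omega_t=\omega+\sqrt{-1}\partial\bar\partial\varphi_t$ satisfies
\begin{equation*}
\Ric(\omega_t)\,=\,t\,\omega_t\,+\,(1-t)\,\omega,
\end{equation*}
so for $t$ in any interval $[\delta,1]$ there is a uniform positive Ricci lower bound $\Ric(\omega_t)\ge \delta\,\omega_t\ge R_0\omega_t$, while the volume is fixed by $[\omega_t]=2\pi c_1(M)$. Hence $(M,\omega_t)\in \mathcal{K}(R_0,V_0,m)$ for a uniform $R_0,V_0>0$. Since $m\le 3$, Theorem \ref{thm23} applies and yields a sequence $l_i\to\infty$ together with constants $c_{l_i}>0$, independent of $t$, such that
\begin{equation*}
\rho_{\omega_t,l_i}(x)\,\ge\,c_{l_i}\,>\,0,\qquad \forall\,x\in M,\ t\in [\delta,1].
\end{equation*}
This is exactly the partial $C^0$-estimate Tian requires.

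With this estimate in hand, one follows the algebraic-geometric argument of \cite{T3} (see also \cite{Zha}) verbatim. The partial $C^0$-estimate lets one use an $L^2$-orthonormal basis of $H^0(M,K_M^{-l_i})$ to give Kodaira embeddings $\Phi_{t,l_i}:M\hookrightarrow \mathbb{CP}^{N-1}$ whose images have uniformly bounded degree and whose Chow points lie in a compact set. Passing to a limit $t_j\to t_\infty$, the images $\Phi_{t_j,l_i}(M)$ converge, after a suitable $\mathrm{SL}(N,\mathbb{C})$-adjustment, to a normal $\mathbb{Q}$-Fano limit $M_\infty$ with klt singularities, which fits into a test configuration for $M$. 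A CM-weight / Futaki-invariant computation along the degeneration, combined with the energy estimates coming from the breakdown of $C^0$ at $t_\infty$, shows that the Donaldson--Futaki invariant of this test configuration is non-positive, contradicting K-stability (and actually forcing $M_\infty\cong M$ once one excludes product configurations using the no-holomorphic-vector-fields hypothesis of Theorem 1.1). Therefore $I$ is closed, $1\in I$, and a K\"ahler--Einstein metric exists.

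The main obstacle is not the analytic step, which is delivered cleanly by Theorem \ref{thm23}, but rather the algebro-geometric identification of the Gromov--Hausdorff/Chow limit $M_\infty$ with the central fibre of a genuine test configuration and the sign computation of its Donaldson--Futaki invariant in terms of the asymptotics of Mabuchi's $K$-energy along $\{\varphi_t\}$; this is exactly the content of \cite{T3} and its companion work, so here it is invoked as a black box rather than redeveloped.
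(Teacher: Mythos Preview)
Your proposal is correct and matches the paper's own treatment: the paper does not give a detailed proof of this corollary either, but simply states that it follows from Theorem~\ref{thm23} by ``following Tian's approach'' (with a reference to \cite{Zha}), which is exactly the continuity-method argument you outline---uniform positive Ricci lower bound along the path $\Ric(\omega_t)=t\omega_t+(1-t)\omega$, partial $C^0$-estimate from Theorem~\ref{thm23}, then the algebro-geometric machinery of \cite{T3} as a black box.
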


We can also prove a stronger version of Tian' Conjecture in complex dimension $1$,
\begin{theorem}\label{thm2}
Let $(M,\omega)$ be a K\"{a}hler manifold of complex dimension $1$ and with positive first Chern class. Then for all $l\in \mathbb{N}_{+}$,
the Bergman kernel $\rho_{\omega,l}$ has a uniform positive lower bound $c_{l}>0$, depending only on positive curvature lower bound, volume lower bound and $l$:
\begin{equation}
\rho_{\omega,l}\,>\,c_l\,>\,0.
\end{equation}
\end{theorem}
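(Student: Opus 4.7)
The plan is to construct, at each point $x_0 \in M$, a global holomorphic peak section $S \in H^0(M, K_M^{-l})$ for which the ratio $\|S\|^2_{H^{\otimes l}_\omega}(x_0)/\|S\|^2_{H^{\otimes l}_\omega, \omega}$ has a uniform positive lower bound depending only on $l$, $R_0$, and $V_0$; completing $S/\|S\|_{H^{\otimes l}_\omega,\omega}$ to an orthonormal basis then yields $\rho_{\omega,l}(x_0) \ge c_l > 0$ pointwise, uniformly in $x_0$ and in the metric. This is the classical Tian--Demailly peak-section construction via H\"ormander's $L^2$-estimate.

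First I would extract a uniform local model. The underlying Riemannian $2$-manifold has $\Ric(\omega) \ge R_0 > 0$, so Myers' theorem gives $\diam(M,\omega) \le \pi/\sqrt{R_0}$, while the class condition $[\omega] = 2\pi c_1(M)$ forces $\Vol(M,\omega) = 4\pi$. Classical injectivity-radius estimates (Cheeger's lemma) then yield a scale $r_0 = r_0(R_0, V_0) > 0$ such that, for every $x_0 \in M$, there exist a holomorphic coordinate $z: B(x_0, r_0) \to \mathbb{C}$ and a local K\"ahler potential $\varphi$ (with $i\partial\overline{\partial}\varphi = \omega$) normalized so that $\varphi(x_0) = 0$, $d\varphi(x_0) = 0$, and $\varphi(z) = |z|^2 + O(|z|^3)$ with uniform constants.

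Next, take a local holomorphic frame $e$ of $K_M^{-1}$ on $B(x_0, r_0)$ with $|e|^2_{H_\omega} = e^{-\varphi}$ and a cutoff $\chi$ supported in $B(x_0,r_0)$, identically $1$ on $B(x_0, r_0/2)$, with $|\nabla \chi| \le C/r_0$, and form the smooth approximate peak section $\tilde S := \chi\, e^{\otimes l}$. The obstruction $\overline{\partial} \tilde S = (\overline{\partial}\chi)\, e^{\otimes l}$ is supported in the annulus $\{r_0/2 \le |z| \le r_0\}$. Applying H\"ormander's $L^2$-estimate on $(M,\omega)$ to the line bundle $K_M^{-l-1}$ (with positive curvature $(l+1)\omega$) against a plurisubharmonic weight $\psi$ equal to $(2+\delta)\log|z|$ near $x_0$ and cut off smoothly outside the chart, I would obtain a smooth $u$ with $\overline{\partial} u = \overline{\partial} \tilde S$ and
\[
\int_M |u|^2_{H^{\otimes l}_\omega}\, e^{-\psi}\, d\mu \le C(l, R_0, V_0).
\]
Since $e^{-\psi}$ is non-integrable at $x_0$, this forces $u(x_0) = 0$, so $S := \tilde S - u$ is a global holomorphic section of $K_M^{-l}$ with $\|S\|^2_{H^{\otimes l}_\omega}(x_0) = 1$ and $\|S\|^2_{H^{\otimes l}_\omega,\omega} \le 2\|\tilde S\|^2_{L^2} + 2\|u\|^2_{L^2} \le C'(l, R_0, V_0)$, giving the required bound $\rho_{\omega,l}(x_0) \ge 1/C' = c_l$.

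The main obstacle is the uniform local-geometry step: producing the chart, the local potential, and the cutoff at a definite scale $r_0(R_0, V_0)$ that is independent of the particular metric in the class. In complex dimension $1$ this step is tractable by Gauss--Bonnet together with classical Riemannian convergence theory for surfaces with two-sided curvature bounds, which is precisely why the stronger conclusion (all $l$, not just a subsequence $l_i \to \infty$) is available here; in higher complex dimension this uniform-scale issue is exactly the point that forces the much deeper Cheeger--Colding and Tian--Zhang machinery invoked for Theorem~\ref{thm23}.
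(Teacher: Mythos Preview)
Your peak-section outline is the right template once uniform local geometry is available, but the sentence ``In complex dimension $1$ this step is tractable by Gauss--Bonnet together with classical Riemannian convergence theory for surfaces with two-sided curvature bounds'' is exactly where the argument breaks. The hypotheses give only a \emph{lower} bound $R\ge R_0>0$; Gauss--Bonnet yields an $L^1$ upper bound $\int_M R\,\omega = 8\pi$, not a pointwise one. Families of football-shaped (orbifold-degenerating) metrics on $S^2$ with $R\ge R_0$ and $\mathrm{Vol}=4\pi$ have curvature at the tips going to infinity, so neither Cheeger's lemma (which needs an upper sectional bound) nor any uniform model $\varphi(z)=|z|^2+O(|z|^3)$ with controlled constants is available. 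Without that, the cutoff scale $r_0$ collapses, the annular error $\overline{\partial}\tilde S$ is no longer small against the center value, and the H\"ormander correction can swamp the peak.

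The paper's proof is designed precisely to manufacture the missing upper curvature bound. It runs the K\"ahler--Ricci flow from $\omega$; Theorem~\ref{main} (combined with Lemma~\ref{lbe}) gives a uniform two-sided bound on $R(\omega_s)$ for $s\in[R_0,1]$, depending only on $R_0,V_0$. Lemmas~\ref{csb}--\ref{hequ} show the Hermitian metrics $H_\omega$ and $H_{\omega_1}$ are uniformly equivalent, hence so are the Bergman kernels $\rho_{\omega,l}$ and $\rho_{\omega_1,l}$. Since $(M,\omega_1)$ now has bounded curvature, volume, and diameter, Hamilton's compactness (equivalently, your peak-section argument, which \emph{is} valid at time $1$) gives $\rho_{\omega_1,l}\ge c_l$, and the equivalence transfers this back to $\rho_{\omega,l}$. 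So your strategy is not wrong in spirit --- it is essentially what happens at time $1$ --- but you cannot apply it at time $0$ without first regularizing the metric.
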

In order to study the Bergman kernel, we consider the K\"{a}hler Ricci flow
 \begin{equation}\label{Kahler}
\partial_t g_{i\bar{j}}\,=\,g_{i\bar{j}}\,-\,R_{i\bar{j}}=u_{i\bar{j}},~~t>0
\end{equation}
on a compact K\"{a}hler manifold $M$ of complex dimension $m$ and with $c_1(M) > 0$.
We can show that the Bergman kernels are equivalent along the K\"{a}hler Ricci flow (\ref{Kahler}), see Theorem \ref{Beq}.

Given any initial K\"{a}hler metric $g(0)$, Cao \cite{Ca} proved that (\ref{Kahler})
has a solution for all time $t \ge 0$. Moreover, Perelman (see \cite{ST}) proved that
the scalar curvature $R$ is uniformly bounded, and the Ricci potential $u$ is uniformly bounded in $C^1$ norm, with respect to $g(t)$.
One can easily check that these uniform bounds depend on the Sobolev constant $C_s$ of $g(0)$, the volume $V$ of $g(0)$,
the scalar curvature upper and lower bound of $g(0)$, the upper bound of $|\nabla u|(x,0)$. The following theorem is essentially due to Perelman,
\begin{theorem}\label{Perelman}(see \cite{ST})
Let $g(t)$ be a K\"{a}hler Ricci flow (\ref{Kahler}) on a Fano manifold $M$ of complex dimension $m$. There exists a uniform constant $C$ so that
\begin{equation*}
|R(g(t))|\,\le\, C,~ ~\mbox{\diam}(M,g(t))\,\le\, C,~~||u||_{C^1}\,\le\, C,
\end{equation*}
where the constant $C$ depends only on dimension $m$, \Vol$(M,g(0))$, the $L^2$-Sobolev constant $C_s$ of $g(0)$, bounds of $|R(g(0))|$ and $|\nabla u|(0)$.
\end{theorem}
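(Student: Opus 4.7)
The plan is to follow Perelman's strategy for the K\"ahler-Ricci flow on a Fano manifold, essentially as exposed in Sesum-Tian. The cornerstone is Perelman's $\mathcal{W}$-entropy
$$\mathcal{W}(g,f,\tau)\,=\,\int_M \left[\tau(R + |\nabla f|^2) + f - 2m\right] (4\pi \tau)^{-m} e^{-f}\, dV_g$$
and the associated $\mu$-functional $\mu(g,\tau)\,=\,\inf_f \mathcal{W}(g,f,\tau)$ taken over $f$ with $\int_M (4\pi\tau)^{-m} e^{-f} dV_g = 1$. Perelman's monotonicity implies that along (\ref{Kahler}), after the standard time-rescaling between the normalized and unnormalized flows, $\mu(g(t),1/2)$ is non-decreasing. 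Being bounded above on compact manifolds of fixed volume, it admits a uniform lower bound depending only on $\mu(g(0),1/2)$, which is in turn controlled via a log-Sobolev comparison by the $L^2$-Sobolev constant $C_s$ together with $\Vol(M,g(0))$ and $|R(g(0))|$. A standard consequence is $\kappa$-non-collapsing at all scales $r \le 1$, with $\kappa$ depending on the same data.

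Next I would estimate the Ricci potential $u$, normalized by $\int_M e^{-u}\omega(t)^m = \int_M \omega(t)^m$. Along (\ref{Kahler}) one has $\Delta u = m - R$ and $\partial_t u = \Delta u + u - a(t)$ for a constant $a(t)$ controlled by the normalization. Combining this with non-collapsing, which furnishes a uniform Sobolev inequality, Moser iteration yields an $L^\infty$ bound on $u$. The $C^1$ bound follows from Perelman's auxiliary quantity $Q = R + |\nabla u|^2 - u$, which along the flow satisfies a differential inequality amenable to the maximum principle once $\sup u$ is controlled; the scalar curvature bound $|R| \le C$ comes along simultaneously since $R = Q + u - |\nabla u|^2$.

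The main technical obstacle is the scalar curvature upper bound. The direct evolution $\partial_t R = \Delta R + 2|\Ric|^2 - R$ carries the bad sign $|\Ric|^2 \ge R^2/(2m)$, so one cannot close a maximum-principle argument in isolation. The essential point is to couple $R$ with $u$ through the identity $R + \Delta u = m$ and the monotonicity of $Q$ under the flow; this is precisely Perelman's (originally unpublished) insight in the K\"ahler case, and it is where the input $|\nabla u|(0)$ enters through the initial value of $Q$. Once $|R| \le C$ is established, the diameter bound is routine: $\kappa$-non-collapsing at scale $1$ combined with the fixed total volume $\Vol(M,g(t)) = \Vol(M,g(0))$ produces a universal upper bound on $\diam(M,g(t))$ via a standard ball-packing argument. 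The dependence of all constants on $m$, $\Vol(M,g(0))$, $C_s$, $|R(g(0))|$, and $|\nabla u|(0)$ is transparent from the above steps.
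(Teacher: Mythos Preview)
Your outline has the right ingredients but the logical order is inverted, and this creates a genuine circularity. You claim that Moser iteration yields an $L^\infty$ bound on $u$, and then use $\sup u$ to close the maximum-principle argument for $R$ and $|\nabla u|$, after which the diameter follows by ball-packing. But Moser iteration only delivers a \emph{lower} bound on $u$: applying it to $f=e^{-u}$, which satisfies $-\Delta f + Rf \le mf$, gives $\sup f \le C$, i.e.\ $u \ge -C$. There is no analogous inequality for $e^{u}$ because $\Delta u = m-R$ has the wrong sign when $R$ is large, and the evolution $\partial_t u = \Delta u + u + a$ has the unstable $+u$ term. So at this stage you do not have $\sup u$, hence not $|R|\le C$, hence your ball-packing diameter argument (which needs $R\le r^{-2}$ at a fixed scale) does not start.

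The paper's route avoids this circularity. After the lower bound $u\ge -C$, a Yau-type maximum principle on the quotients $H=|\nabla u|^2/(u+B)$ and $K=-\Delta u/(u+B)$ gives only the \emph{relative} bounds $|\nabla u|^2 \le C(u+C)$ and $R \le C(u+C)$; these translate into $u,R,|\nabla u|^2 \le C(\dist_t(\hat x,\cdot)^2+1)$ where $\hat x$ minimizes $u$. The diameter bound is then the substantive step and is \emph{not} routine ball-packing: one only has $R \lesssim 2^{2k}$ on the annulus $B(k,k+1)$, so non-collapsing gives $\Vol(B(k,k+1))\gtrsim 2^{2k-2km}$, and one must combine this with a pigeonhole argument on nested annuli and a test-function computation in the $\mathcal W$-functional to derive a contradiction if the diameter is too large. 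Only after $\diam \le C$ do the absolute bounds on $u$, $R$, $|\nabla u|$ follow. Your proposed quantity $Q=R+|\nabla u|^2-u$ does not shortcut this: its evolution $(\partial_t-\Delta)Q = -|\nabla\nabla u|^2 + Q - n/2 - a$ still leaves $Q$ potentially growing and does not close without the diameter.
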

In this paper, we prove that in all complex dimensions, the scalar curvature and the gradient of Ricci potential $|\nabla u|^2$ for $(M,g(t))$ have a
bound $C t^{-\frac{n+2}{2}}$ in small time. Here $C$ depends only on a lower bound of Ricci curvature, the volume bound of $g(0)$ and an upper bound of \diam $(M,g(0))$. The key result of this paper is the following:
\begin{theorem}\label{main}
Let $g(t)$ be a K\"{a}hler Ricci flow (\ref{Kahler}) on a Fano manifold $M$ of real dimension $n = 2m$.
Then there exists a uniform constant $C$, which depends only on constant $n_0$ ($n_0=n$ if $n\ge3$ , $n_0>2$ if $n=2$),
the lower bound of $\Ric(g(0))$, the volume $V$ of $g(0)$ and the upper bound of diameter of $g(0)$.
Such that for all $0<t<1$, we have
\begin{equation}
\sup_{x\in M}|R(x,t)|\,\le\, \frac{C}{t^{\frac{n_0+2}{2}}}
\end{equation}
and
\begin{equation}
\sup_{x\in M}|\nabla u(x,t)|^2\,\le\, \frac{C}{t^{\frac{n_0+2}{2}}}.
\end{equation}
\end{theorem}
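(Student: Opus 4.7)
The strategy is to run parabolic Moser iteration on the evolution equations for $R$ and $|\nabla u|^2$, using a Sobolev inequality derived from the initial Ricci lower bound, volume, and diameter, rather than from a priori curvature bounds as in Theorem~\ref{Perelman}. The singular factor $t^{-(n_0+2)/2}$ at short time reflects the fact that the initial $L^\infty$ norms of $R$ and $|\nabla u|$ are not under our control.

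The first ingredient is an initial Sobolev inequality. From $\Ric(g(0))\ge R_0$, $\Vol(g(0))\ge V_0$ and $\diam(g(0))\le D_0$, Bishop--Gromov volume comparison together with Croke's isoperimetric inequality (valid cleanly in real dimension $n\ge 3$, and requiring a slight perturbation on surfaces, which is precisely why one must allow $n_0>2$ when $n=2$) yields a Sobolev inequality on $(M,g(0))$ with constant depending only on $n_0, R_0, V_0, D_0$. This inequality is then propagated to $(M,g(t))$: since $\partial_t g = g - \Ric$, on any interval $[0,\tau]$ on which $|\Ric|\le A$ one has $e^{-(1+A)\tau}g(0)\le g(s)\le e^{(1+A)\tau}g(0)$, so the Sobolev inequality transfers to $g(s)$ with a constant only deteriorating by the factor $e^{c(1+A)\tau}$. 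For $(1+A)\tau$ bounded, the Sobolev constant along the flow is uniformly controlled by the initial data alone.

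With the Sobolev inequality in hand along the flow, the scalar curvature evolution $(\partial_t - \Delta)R = |\Ric|^2 - R$ is amenable to parabolic De~Giorgi--Nash--Moser iteration. Combined with the topological identity that $\int_M R\,d\mu_{g(t)}$ is a Chern number preserved by KRF, the iteration produces the desired bound $\sup R\le Ct^{-(n_0+2)/2}$ on a short time interval. A parallel argument handles $|\nabla u|^2$: the Ricci potential evolves by $\partial_t u = \Delta u + u - a(t)$, Bochner's formula applied to $|\nabla u|^2$ yields an inequality of the form $(\partial_t-\Delta)|\nabla u|^2 \le |\nabla u|^2 + O(|R|)$, and Moser iteration with the same Sobolev inequality, fed by the bound on $R$ just obtained, gives the companion bound.

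The main obstacle is the circularity between the Sobolev propagation step and the Moser step: the former needs an $L^\infty$ bound on $\Ric$ (or at least on $R$), while the latter provides such a bound only once the Sobolev constant is fixed. This must be closed by a continuity/bootstrap argument in the time length $\tau$: assume $\sup R\le 2C t^{-(n_0+2)/2}$ on $[0,\tau]$, run the two steps to derive a strictly stronger bound on a slightly larger interval, and conclude by openness. Arranging all universal constants and the final time $\tau$ to depend only on $n_0, R_0, V_0, D_0$, with no input from any higher derivative bound of $g(0)$, is the technical heart of the proof.
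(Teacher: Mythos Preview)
Your proposal has two genuine gaps that prevent it from closing.

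First, propagating the Sobolev inequality via metric equivalence $e^{-(1+A)\tau}g(0)\le g(s)\le e^{(1+A)\tau}g(0)$ cannot be made uniform. This comparison requires a two-sided bound on $\Ric$, not merely on $R$, and Moser iteration on the scalar curvature evolution will never furnish a full Ricci bound; your bootstrap hypothesis is stated for $R$, so the loop does not close at the level of the quantity you actually need. Even ignoring that mismatch, the bound you are aiming for, $R\le 2Ct^{-(n_0+2)/2}$, blows up at $t=0$ with a non-integrable rate, so $(1+A)\tau$ is unbounded on any interval containing $0$ and no uniform metric equivalence follows. The paper bypasses this circularity entirely: the Sobolev inequality along the flow (Lemmas~\ref{Sobolev1}, \ref{Sobolev2}) is obtained from Perelman's $\mathcal W$-entropy monotonicity (Ye, Q.~Zhang), and holds for all $t$ with constants depending only on the \emph{initial} Sobolev constant, volume, and $\inf R(g(0))$. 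No curvature control along the flow is required; the scalar curvature enters only as a potential $\tfrac{R}{4}$ on the right side, and in the Moser iteration this is absorbed as a \emph{good} term because $\partial_t\,d\mu=(\tfrac{n}{2}-R)\,d\mu$ contributes a matching $-R$.

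Second, direct Moser iteration on $(\partial_t-\Delta)R=|\Ric|^2-R$ is not available: the term $|\Ric|^2$ is quadratic and is not controlled by any $L^p$ norm of $R$ alone, so the $L^1$ Chern-number bound on $\int_M R$ does not feed the iteration. The paper's device is to iterate on the sum $f=R+|\nabla u|^2$; using the Bochner-type evolution for $|\nabla u|^2$ one finds
\[
(\partial_t-\Delta)\bigl(R+|\nabla u|^2\bigr)=-|\nabla\nabla u|^2+R+|\nabla u|^2-\tfrac{n}{2}\le R+|\nabla u|^2,
\]
because the $|\nabla\bar\nabla u|^2$ terms cancel between the two equations. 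This is now a linear differential inequality exactly of the form treated in Theorem~\ref{thm1}, and the remaining task is to bound $\int_0^1\!\int_M(|R|+|\nabla u|^2)$ in terms of the initial Ricci lower bound, volume, and diameter; this is done via a Green's function estimate for $\int_M u\,\omega^m(0)$ rather than by any bootstrap.
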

\begin{remark} Note that, one can deduce a uniform lower bound for $u$ by Moser's iteration using the $L^2$-Sobolev inequality along the K\"{a}hler Ricci flow (Lemma \ref{Plem1}), so the gradient bound of $u$ implies the $C^1$-norm of $u$. Applying the results of Perelman (Theorem \ref{Perelman})and Theorem \ref{main}, we can estimate the diameter, the $C^1$ norm of Ricci potential, scalar curvature upper bound for all time along the
K\"{a}hler Ricci flow.
\end{remark}
Theorem \ref{main} is a corollary of the following theorem. Its proof replies on Moser's iteration combined with the $L^2$-Sobolev inequality along the Ricci flow
(see \cite{M}). Note that the term involving the scalar curvature will be a good term when applying Moser's iteration.
\begin{theorem}\label{thm1}
Let $g(t)$ be a K\"{a}hler Ricci flow  on a Fano manifold $M$ of real dimension $n$.
\begin{equation*}
\partial_t g_{i\bar{j}}\,=\,g_{i\bar{j}}\,-\,R_{i\bar{j}}\,=\,u_{i\bar{j}},~~t>0
\end{equation*}
Let $f$ be a nonnegative Lipschitz continous function on $M\times [0,\infty)$ satisfying
\begin{equation}\label{eqn1}
\frac{\partial f}{\partial t}\,\le\, \Delta f \,+\, a f
\end{equation}
on $M\times [0,\infty)$ in the weak sense, where $a\ge 0$,then for any $0<t<1$, $p>0$, we have
\begin{equation*}
\sup_{x\in M}|f(x,t)|\,\le\, \frac{C}{t^{\frac{n_0+2}{2p}}}\left(\int_0^1\int_M f(x,t)^pd\mu(t)dt\right)^\frac{1}{p},
\end{equation*}
where $C$ is a positive constant depending only on $a$, $p$, constant $n_0(n_0=n,if~n\ge3 ,~n_0>2,if~n=2)$, Sobolev constant $C_s$ of initial metric, volume $V$ of initial metric and negative lower bound of $R(0)$.
\end{theorem}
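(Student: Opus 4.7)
The argument is a standard parabolic Moser iteration, adapted to the K\"ahler Ricci flow. The two main inputs are: (i) the uniform $L^2$-Sobolev inequality along $g(t)$ provided by Lemma \ref{Plem1}, yielding a constant $C_s$ so that
\begin{equation*}
\left(\int_M w^{2q}\,d\mu_t\right)^{1/q}\le C_s\int_M(|\nabla w|^2+w^2)\,d\mu_t,\qquad q:=\tfrac{n_0}{n_0-2},
\end{equation*}
for all $t\in[0,1]$ and $w\in W^{1,2}(M,g(t))$; and (ii) the volume-form evolution $\partial_t d\mu_t=(m-R)\,d\mu_t$ together with the a priori lower bound $R(g(t))\ge \inf R(g(0))-m/4$ on $[0,1]$, which follows from $\partial_t R=\Delta R+|\Ric|^2-R$, $|\Ric|^2\ge R^2/n$, and the scalar maximum principle.

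\textbf{Step 1: energy inequality.} For $p>1$, multiply $\partial_t f\le\Delta f+af$ by $pf^{p-1}$ and integrate over $M$. Integration by parts (justified by Lipschitz regularity) combined with (ii) yields
\begin{equation*}
\frac{d}{dt}\int_M f^p\,d\mu_t+\frac{4(p-1)}{p}\int_M|\nabla f^{p/2}|^2\,d\mu_t\;\le\; A(p)\int_M f^p\,d\mu_t,
\end{equation*}
with $A(p)=O(p)$, the implicit constant depending only on $a$, $m$, and $\inf R(g(0))$. Apply the Sobolev inequality to $w=f^{p/2}$, use the standard space-time interpolation $\iint f^{p\theta}\le(\sup_t\int f^p)^{2/n_0}\iint f^{pq}$ with $\theta=(n_0+2)/n_0$, and multiply by a time cut-off $\eta^2(t)$ supported in $[\tau,1]$ with $\eta\equiv 1$ on $[\tau',1]$ and $|\eta'|\le 2/(\tau'-\tau)$. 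The resulting manipulation produces the reverse-H\"older estimate
\begin{equation*}
\left(\int_{\tau'}^{1}\!\int_M f^{p\theta}\,d\mu_t\,dt\right)^{1/\theta}\le \frac{C\,(1+A(p))}{\tau'-\tau}\int_{\tau}^{1}\!\int_M f^{p}\,d\mu_t\,dt.
\end{equation*}

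\textbf{Step 2: Moser iteration for $p\ge 2$.} Fix $p\ge 2$ and set $p_k:=p\theta^k$, $\tau_k:=t-t\cdot 2^{-k-1}$, so $\tau_{k+1}-\tau_k=t\cdot 2^{-k-2}$. With $\Phi_k:=(\int_{\tau_k}^{1}\!\int_M f^{p_k})^{1/p_k}$, Step 1 gives $\Phi_{k+1}\le(C(1+A(p_k))\,2^{k+2}/t)^{1/p_k}\Phi_k$. Since $\sum_k 1/p_k=(n_0+2)/(2p)$ and both $\sum_k k/p_k$ and $\sum_k \log p_k/p_k$ converge, the telescoped product of constants is uniformly bounded, and letting $k\to\infty$ yields
\begin{equation*}
\sup_{[t,1]\times M}f\;\le\;\frac{C}{t^{(n_0+2)/(2p)}}\left(\int_0^1\!\int_M f^p\,d\mu_t\,dt\right)^{1/p}.
\end{equation*}

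\textbf{Step 3: boosting to $0<p<2$, the main obstacle.} The direct iteration degenerates at $p=1$ (the coefficient $4(p-1)/p$ vanishes), so a self-improving absorption argument is needed. Running Step 2 with exponent $2$ but with a free lower cut-off produces: for $0<\tau'<\tau<1$, writing $\Psi(\sigma):=\sup_{[\sigma,1]\times M}f$,
\begin{equation*}
\Psi(\tau)\le\frac{C}{(\tau-\tau')^{(n_0+2)/4}}\left(\int_{\tau'}^1\!\int_M f^2\,d\mu_t\,dt\right)^{1/2}.
\end{equation*}
The interpolation $\int_{\tau'}^{1}\!\int_M f^2\le\Psi(\tau')^{2-p}J$ with $J:=\int_0^1\!\int_M f^p\,d\mu_t\,dt$ upgrades this to $\Psi(\tau)\le C(\tau-\tau')^{-(n_0+2)/4}\Psi(\tau')^{(2-p)/2}J^{p/4}$. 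Iterate along $r_k:=(t/2)(1+2^{-k})$ (so $r_0=t$, $r_\infty=t/2$, $r_k-r_{k+1}=t\cdot 2^{-k-2}$): with $\beta_k:=\Psi(r_k)$ and $\gamma:=(2-p)/2\in(0,1)$, $\beta_k\le B_k\,\beta_{k+1}^\gamma$ where $\log B_k=O(k)+O(|\log t|+|\log J|)$. Unfolding gives $\beta_0\le\prod_{k=0}^{n-1}B_k^{\gamma^k}\,\beta_n^{\gamma^n}$; since $\sum\gamma^k=2/p$ and $\sum k\gamma^k<\infty$ the infinite product converges, and $\beta_n^{\gamma^n}\to 1$ because $\Psi(t/2)<\infty$. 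Collecting exponents yields $\sup_{x\in M}f(x,t)\le\Psi(t)=\beta_0\le C\,t^{-(n_0+2)/(2p)}J^{1/p}$, completing the argument. The main bookkeeping difficulty is ensuring the telescoping constants in Step 2 are summable (which reduces to the geometric growth of $p_k$) and implementing Step 3's absorption correctly in the regime where direct energy estimates break down.
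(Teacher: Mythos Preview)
Your overall strategy---parabolic Moser iteration plus a self-improving argument for $0<p<2$---matches the paper's, and Steps 2--3 are essentially correct (the paper handles $0<p<2$ by invoking a standard iteration lemma rather than unfolding the geometric series by hand, but the content is the same).

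The gap is in Step 1. The uniform Sobolev inequality you invoke,
\[
\Bigl(\int_M w^{2q}\,d\mu_t\Bigr)^{1/q}\le C_s\int_M\bigl(|\nabla w|^2+w^2\bigr)\,d\mu_t,
\]
is \emph{not} available along the flow: what Ye/Zhang actually prove (this is Lemma~\ref{Sobolev1}/\ref{Sobolev2} in the paper, not Lemma~\ref{Plem1}) is
\[
\Bigl(\int_M w^{2q}\,d\mu_t\Bigr)^{1/q}\le A\int_M\bigl(4|\nabla w|^2+(R+C_0)w^2\bigr)\,d\mu_t,
\]
with the scalar curvature $R=R(g(t))$ sitting inside. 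Since only a \emph{lower} bound on $R$ is known a priori, you cannot drop or absorb the $(R+C_0)w^2$ term to recover your clean form; doing so would require the very upper bound on $R$ that Theorem~\ref{main} is meant to establish.

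The paper's fix is precisely to \emph{not} absorb the scalar curvature coming from $\partial_t d\mu_t=(m-R)\,d\mu_t$ into the constant $A(p)$. Instead one keeps the full energy identity
\[
\partial_t\!\int_M f^{p+1}d\mu_t+\int_M Rf^{p+1}d\mu_t+2\int_M|\nabla f^{(p+1)/2}|^2d\mu_t\le \bigl(a(p+1)+n\bigr)\int_M f^{p+1}d\mu_t,
\]
rewrites the left side as $\partial_t\int f^{p+1}+\tfrac12\int[(R+C_0)f^{p+1}+4|\nabla f^{(p+1)/2}|^2]$ (using only $R\ge -C_0$ to make the bracket nonnegative), and then this bracket is \emph{exactly} the right-hand side of the flow Sobolev inequality. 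The paper flags this explicitly: ``the term involving the scalar curvature will be a good term when applying Moser's iteration.'' With that pairing in place, your Steps 2--3 go through unchanged.
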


Notations: Let $R=R(x,t)$ be the scalar curvature at time $t$, $V$ be the volume of $g(0)$, $d$ be the diameter upper bound of $g(0)$, $d\mu(t)$ be the volume element of $g(t)$. Denote $\diam(M,g)$ and $\Vol(M,g)$ to be the diameter and volume of $(M,g)$.

The organization of this paper is as follows: In the next section, we give a proof for Theorem \ref{Perelman}, we will consider carefully how the quantities rely on the initial metric. In section \ref{secThm1}, we prove Theorem \ref{thm1}. The main idea is Moser's iteration for parabolic equation (see \cite{M}). In section \ref{secThm2}, applying Theorem \ref{thm1}, we give the proof of theorem \ref{main}. In section \ref{secPCE}, we consider the complex $1$ case and prove Theorem \ref{thm2}. We will divide the proof into several lemmas. In section \ref{secPCEH}, we consider high dimension case, we show that Bergman kernels are equivalent along the K\"{a}hler Ricci flow and complete the proof of Theorem \ref{thm23}.

\section{Perelman's Scalar Curvature Estimate}\label{secPere}
In this section, we will give a proof of Theorem \ref{Perelman}. The method is mainly similar to \cite{ST} and \cite{Ca1}. We will consider carefully all the quantities how to rely on the initial metric. We only prove complex dimension $\ge 2$ case. For
complex dimension $1$ Fano manifold, the proof is similar by noticing that Proposition \ref{Ppro} can be replaced with Lemma \ref{Sobolev2}.
First of all, we will show that the Ricci potential has a uniform lower bound, and then using maximum principle we can control the gradient of Ricci potential and scalar curvature upper bound. At last, a diameter upper bound estimate will conclude the proof.

Now we will prove a uniform Ricci potential lower bound. Firstly, we need to show the scalar curvature has a uniform lower bound.
\begin{lemma}
There exists a constant $C>0$ such that the scalar curvature $R$ of $g(t)$ satisfies the estimate
$$R(x,t)\,\ge\, -C$$
for all $t\ge 0$ and all $x\in M$. Here constant $C$ depends only on the lower bound of $R(g(0))$.
\end{lemma}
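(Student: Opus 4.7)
The plan is to apply the parabolic minimum principle to the evolution equation of the scalar curvature. Under the normalized K\"ahler Ricci flow $\partial_t g_{i\bar j} = g_{i\bar j} - R_{i\bar j}$ on a Fano manifold of complex dimension $m$, a standard computation gives
\[
\partial_t R \,=\, \Delta R \,+\, |\Ric|^2 \,-\, R.
\]
This is the starting point, and no initial-metric quantities enter here beyond the bare evolution.

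Next I would invoke the K\"ahler pointwise inequality $|\Ric|^2 \ge R^2/m$, which follows immediately from Cauchy--Schwarz applied to $R = g^{i\bar j} R_{i\bar j}$ together with $g^{i\bar j} g_{i\bar j} = m$. Substituting into the evolution equation yields the differential inequality
\[
\partial_t R \,\ge\, \Delta R \,+\, \frac{R^2}{m} \,-\, R.
\]

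I would then apply the parabolic minimum principle on the compact manifold $M$. Setting $y(t) = \min_{x\in M} R(x,t)$, the Laplacian term is nonnegative at any spatial minimum, so (in the barrier / upper Dini sense) one gets the ODE comparison
\[
y'(t) \,\ge\, \frac{y(t)^2}{m} \,-\, y(t) \,=\, y(t)\left(\frac{y(t)}{m} - 1\right).
\]
The right-hand side is strictly positive whenever $y(t) < 0$. Therefore, if $R(\cdot,0) \ge -C_0$ for some $C_0>0$, so that $y(0) \ge -C_0$, then $y$ is strictly increasing on any interval where it is negative, and in particular cannot dip below $y(0)$. Consequently $R(x,t) \ge -C_0$ for all $x \in M$ and all $t \ge 0$, which is exactly the claim with $C$ depending only on the initial lower bound of scalar curvature.

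The argument is essentially routine once the evolution equation is in hand; the only mild technical issue is that $y(t) = \min R(\cdot,t)$ need not be smooth. This is the standard obstacle and is handled in the usual way, either via a smooth barrier of the form $\tilde R = R + \varepsilon e^{\lambda t}$ and a sup-over-time argument, or by interpreting the comparison in the viscosity sense; neither introduces further dependence on the initial metric.
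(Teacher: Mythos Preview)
Your proof is correct and follows essentially the same approach as the paper: both start from the evolution equation $\partial_t R = \Delta R + |\Ric|^2 - R$ and apply the parabolic minimum principle to conclude $R(x,t) \ge R_{\min}(0)$. The only minor difference is that you invoke the Cauchy--Schwarz bound $|\Ric|^2 \ge R^2/m$ to obtain an explicit ODE $y' \ge y(y/m-1)$ for $y(t)=\min R$, whereas the paper simply uses $|\Ric|^2 \ge 0$ and works with the shifted function $F = R - R_{\min}(0)$; your route avoids a case split and in fact yields slightly more (e.g.\ $y(t)\to 0$ if $y(0)<0$), but for the stated lemma the two arguments are equivalent.
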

\begin{proof}
 By directly computing, we have the evolution of $R$,
 \begin{equation*}
 \frac{\partial}{\partial t}R\,=\, \Delta R\,+\,|\Ric|^2\,-\,R.
 \end{equation*}
 Let $R_{min}(0)$ be the minimum of $R(x,0)$ on $M$. If $R_{min}\ge 0$, then by maximum principle, we have $R(x,t)\ge 0$ for all $t>0$ and all $x\in M$.

 Now suppose $R_{min}(0)\,<0$. Set $F(x,t)\,=\, R(x,t)\,-\,R_{min}(0)$. Then, $F(x,0)\ge 0$ and $F$ satisfies
 $$\frac{\partial}{\partial t}F\,=\,\Delta F+|\Ric|^2-F-R_{min}(0)\,>\,\Delta +|\Ric|^2-F.$$
 Hence it follows again from the maximum principle that $F\ge 0$ on $M\times [0,\infty]$, i.e.,
 $$R(x,t)\,\ge\, R_{min}(0)$$
 for all $t>0$ and all $x\in M$.
\end{proof}
Next, we will show Perelman's $\kappa$ non-collapsing theorem, we need the following:
\begin{lemma}\label{Ple1}
Let $\hat{g}_{i\bar{j}}(s),~0\le s<1$ and $g_{i\bar{j}}(t),~0\le t<\infty$, be solutions to the K\"{a}hler Ricci flow (\ref{Peq1}) and (\ref{Peq2}) respectively,
 \begin{equation}\label{Peq1}
\partial_s \hat{g}_{i\bar{j}}(s)\,=\,-R_{i\bar{j}}(s),~~0\le s\,<\,1, ~\hat{g}_{i\bar{j}}(0)\,=\,g_{i\bar{j}}
\end{equation}
and
\begin{equation}\label{Peq2}
\partial_t g_{i\bar{j}}(t)\,=\,g_{i\bar{j}}(t)\,-\,R_{i\bar{j}}(t),~~t>0,~{g}_{i\bar{j}}(0)\,=\,g_{i\bar{j}}.
\end{equation}
Then $\hat{g}_{i\bar{j}}(s)$ and $g_{i\bar{j}}(t)$ are related by
\begin{equation*}
\hat{g}_{i\bar{j}}(s)\,=\,(1-s)g_{i\bar{j}}(t(s)),~~t\,=\,-\log(1-s)
\end{equation*}
and
\begin{equation*}
g_{i\bar{j}}(t)\,=\,e^t\hat{g}_{i\bar{j}}(s(t)),~~s\,=\,1-e^{-t}.
\end{equation*}
\qed
\end{lemma}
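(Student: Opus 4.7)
The approach is to convert one flow into the other by an explicit rescaling and time reparametrization, then invoke uniqueness of smooth solutions to the Kähler Ricci flow. Concretely, I would define
\begin{equation*}
\tilde{g}_{i\bar{j}}(s) \,=\, (1-s)\, g_{i\bar{j}}(t(s)), \quad t(s)\,=\, -\log(1-s),
\end{equation*}
and verify two things: $\tilde g(0) = g_{i\bar j}$ (immediate), and $\tilde g$ satisfies the unnormalized flow \eqref{Peq1}. Short-time uniqueness of the Kähler Ricci flow on a compact manifold then identifies $\tilde g(s)$ with $\hat g(s)$, giving the first identity on the full interval $s\in[0,1)$. The second identity follows by inverting $s\mapsto t(s)$: the inverse is $s(t)=1-e^{-t}$, and solving $(1-s)g(t(s)) = \hat g(s)$ for $g(t)$ yields $g(t) = e^t \hat g(s(t))$.

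\textbf{The key calculation.} By the product and chain rules, using $dt/ds = 1/(1-s)$,
\begin{equation*}
\partial_s \tilde{g}_{i\bar{j}}(s) \,=\, -\, g_{i\bar{j}}(t(s)) \,+\, \partial_t g_{i\bar{j}}(t(s)),
\end{equation*}
and substituting the normalized flow equation $\partial_t g_{i\bar j} = g_{i\bar j} - R_{i\bar j}(g)$ makes the $g_{i\bar j}$ terms cancel, leaving $-R_{i\bar j}(g(t(s)))$. The last point is to note that $R_{i\bar j}(g(t(s))) = R_{i\bar j}(\tilde g(s))$, which is the crucial Kähler-specific input: because $R_{i\bar j} = -\partial_i \partial_{\bar j}\log\det g$, the Ricci tensor is invariant under multiplication of the metric by a positive constant. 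Therefore $\partial_s \tilde g_{i\bar j}(s) = -R_{i\bar j}(\tilde g(s))$, as needed.

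\textbf{Main obstacle.} There is essentially none; this is a routine verification once the rescaling ansatz is chosen. The only substantive ingredient is the scale invariance of the Kähler Ricci tensor, and the particular time change $t=-\log(1-s)$ is forced by the structure of the normalized equation: one wants the scaling factor $\lambda(s)$ to satisfy $\lambda'(s) = -\lambda(s)\cdot (dt/ds)^{-1}\cdot(-1)$ so that the drift term $g_{i\bar j}$ cancels, which (taking $\lambda(0)=1$) pins down $\lambda(s)=1-s$ and $dt/ds = 1/(1-s)$.
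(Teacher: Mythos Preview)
Your proposal is correct and is exactly the standard verification; the paper itself omits the proof entirely (the lemma ends with \qed), treating the rescaling/reparametrization identity as routine. One small remark: the scale invariance $R_{i\bar j}(\lambda g)=R_{i\bar j}(g)$ you use is not specifically K\"ahler---the Ricci tensor of any Riemannian metric is invariant under constant scaling---so your argument works verbatim for the real Ricci flow as well.
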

In order to show Perelman's $\kappa$-noncollapsed theorem, we only need to prove the following:
\begin{proposition}(Ye \cite{Y2})\label{Ppro}
Consider the K\"{a}hler Ricci flow (\ref{Peq2}) on Fano manifold. Then there are positive constants $A$ and $B$ depending only on the dimension $m$, a non-positive lower bound for $R_{g(0)}$, a positive lower bound for $\Vol(M,g(0))$, an upper bound for Sobolev constant $C_s(M,g(0))$. Such that, for each $t>0$ and all $f\in W^{1,2}(M)$ there holds
\begin{equation}
\left(\int_M f^{\frac{2m}{m-1}}d\mu(t)\right)^{\frac{m-1}{m}}\le A\int_M(|\nabla f|^2+\frac{R}{4}f^2)d\mu(t)\,+\,B\int_Mf^2d\mu(t).
\end{equation}
Consequently, let $L\,>\,0 $ and assume $R\le \frac{1}{r^2}$ on a geodesic ball $B(x,r)$ with $0\,<\,r\le L$. Then there holds
\begin{equation}
\Vol(B(x,r))\,\ge\, \left(\frac{1}{2^{2m+3}A\,+\,2BL^2}\right)^m r^{2m}.
\end{equation}
\end{proposition}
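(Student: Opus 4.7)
The plan is to follow Ye's argument in \cite{Y2}, which derives a uniform Sobolev inequality along the K\"ahler Ricci flow by combining Perelman's entropy monotonicity with Davies' log-Sobolev to Sobolev upgrade. The first step is to pass from the normalized flow (\ref{Peq2}) to the unnormalized Ricci flow via Lemma \ref{Ple1}: under $\hat g(s) = (1-s)g(-\log(1-s))$, the rescaled metric satisfies $\partial_s\hat g = -\hat{\mathrm{Ric}}$ on $s\in[0,1)$, and the Sobolev inequality on $g(t)$ is equivalent, by the homogeneity of all terms involved, to one on $\hat g(s)$ with $s = 1-e^{-t}$.

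For the unnormalized flow, Perelman's $\mathcal{W}$-entropy
\begin{equation*}
\mathcal{W}(\hat g,\phi,\tau) \,=\, \int_M\bigl[\tau(\hat R+|\hat\nabla\phi|^2)+\phi-2m\bigr](4\pi\tau)^{-m}e^{-\phi}\,d\hat\mu,
\end{equation*}
subject to the normalization $(4\pi\tau)^{-m}\int_Me^{-\phi}d\hat\mu=1$, is non-decreasing in $s$ when $\tau$ decreases at unit rate. Taking $\phi=-2\log f+c$ at a fixed time $s$ and absorbing the normalization constant yields a one-parameter family of log-Sobolev inequalities
\begin{equation*}
\int_M f^2\log f^2\,d\hat\mu \,\le\, \tau\int_M(4|\hat\nabla f|^2+\hat Rf^2)\,d\hat\mu+\beta(\tau)\int_M f^2\,d\hat\mu,\quad \tau>0,
\end{equation*}
with $\beta(\tau)=-m\log(4\pi\tau)-2m-\mu_0$, where $\mu_0$ is any lower bound for Perelman's $\mu$-functional at $s=0$. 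The point is that $\mu_0$ can be bounded below in terms of the initial Sobolev constant $C_s(g(0))$, the volume $\Vol(M,g(0))$, and a non-positive lower bound for $R_{g(0)}$, because the Euclidean-type Sobolev inequality on $(M,g(0))$ implies a log-Sobolev inequality there. Davies' semigroup technique (equivalently, optimizing over $\tau$) then converts this family into a Sobolev inequality of the stated form, with constants $A,B$ depending only on the listed initial data; rescaling back via Lemma \ref{Ple1} preserves the inequality since every term transforms correctly.

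The volume non-collapsing consequence follows by testing the Sobolev inequality against a Lipschitz cutoff $\psi$ with $\psi\equiv 1$ on $B(x,r/2)$, $\mathrm{supp}\,\psi\subset B(x,r)$, and $|\nabla\psi|\le 2/r$. The assumption $R\le 1/r^2$ on $B(x,r)$ bounds $\int R\psi^2/4\,d\mu$ by a multiple of $r^{-2}\Vol(B(x,r))$, while the hypothesis $r\le L$ absorbs the lower-order term $B\int\psi^2\,d\mu$ into an $r^{-2}$ factor; a dyadic iteration on scales $r/2^k$, terminating when ordinary local volume comparison applies, then yields $\Vol(B(x,r))\ge cr^{2m}$ with the explicit constant advertised. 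The main technical obstacle is the bookkeeping needed to track how $\mu_0$ and the log-Sobolev-to-Sobolev conversion propagate explicit dependences into the final constants $A$ and $B$; this is the heart of \cite{Y2} and requires care but introduces no new ideas beyond careful estimation of cutoffs and optimization of parameters.
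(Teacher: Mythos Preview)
Your proposal is correct and follows essentially the same route as the paper's proof: both pass to the unnormalized flow $\hat g(s)$ via Lemma~\ref{Ple1}, invoke Perelman's entropy monotonicity (through Ye's work) to obtain the Sobolev inequality along $\hat g(s)$ with constants depending only on the stated initial data, and then transfer back to $g(t)$ by the scaling relation. The paper's writeup is terser---it cites \cite{Y2} for the log-Sobolev-to-Sobolev conversion and for the cutoff/iteration in the non-collapsing step, and it reduces the volume estimate to $r=1$ by rescaling rather than carrying the $r$ through a dyadic argument---but these are presentational differences, not substantive ones.
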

\begin{proof}
By the monotonicity of $\mathcal {W}$-entropy functional and the flow (\ref{Peq1}) in Lemma \ref{Ple1}, one can show that
\begin{equation*}
\left(\int_M f^{\frac{2m}{m-1}}d\hat{\mu}(s)\right)^{\frac{m-1}{m}}\le A\int_M(|{\nabla} f|^2_{\hat{g}}+\frac{\hat{R}}{4}f^2)d\hat{\mu}(s)\,+\,B\int_Mf^2d\hat{\mu}(s)
\end{equation*}
where constants $A$ and $B$ depend only on the quantities stated in the proposition. By the relation between (\ref{Peq1}) and (\ref{Peq2}),
we have
\begin{equation*}
\begin{split}
\left(\int_M f^{\frac{2m}{m-1}}d{\mu(t)}\right)^{\frac{m-1}{m}}&\le A\int_M(|\nabla f|^2+\frac{{R}}{4}f^2)d{\mu(t)}\,+\,Be^{-t}\int_Mf^2d{\mu(t)}\\
&\le A\int_M(|\nabla f|^2+\frac{{R}}{4}f^2)d{\mu(t)}\,+\,B\int_Mf^2d{\mu(t)}.
\end{split}
\end{equation*}
At last, to prove the $\kappa$-noncollapsed, we can assume $r=1$ since we can scale the metric with factor $\frac{1}{r^2}$. That is $\bar{g}=\frac{1}{r^2} g$. Thus we have
\begin{equation*}
\begin{split}
\left(\int_M f^{\frac{2m}{m-1}}d\bar{\mu}(t)\right)^{\frac{m-1}{m}}&\le A\int_M(|\nabla f|^2_{\bar{g}}+\frac{\bar{R}}{4}f^2)d\bar{\mu}(t)\,+\,Br^2\int_Mf^2d\bar{\mu}(t)\\
&\le  A\int_M(|\nabla f|^2_{\bar{g}}+\frac{\bar{R}}{4}f^2)d\bar{\mu}(t)\,+\,BL^2\int_Mf^2d\bar{\mu}(t)
\end{split}
\end{equation*}
and $\bar{R}\le 1$ on geodesic ball $B_{\bar{g}}(x,1)$. Then a standard argument implies the lower bound of $\Vol _{\bar{g}}(B_{\bar{g}}(x,1))$. One can find more details in Ye \cite{Y2}.
\end{proof}
By taking the trace of (\ref{Kahler}), we get $\Delta u\,=\,m\,-\,R.$ Normalize $u$ so that
\begin{equation*}
\int_Me^{-u}d\mu(t)\,=\,(2\pi)^m.
\end{equation*}
Then we have
\begin{lemma}\label{Plem1}
Function $u(t)$ is uniformly bounded from below. That is
\begin{equation*}
u(t)\ge -C,
\end{equation*}
where constant $C$ depends only on the constants in Proposition \ref{Ppro}.
\end{lemma}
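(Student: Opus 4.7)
The plan is to run Moser iteration on $w := e^{-u/2}$, using the Sobolev inequality of Proposition \ref{Ppro} together with the uniform lower bound $R(\cdot,t) \geq -C_{1}$ from the preceding lemma. The normalization $\int_M e^{-u}\,d\mu(t) = (2\pi)^{m}$ provides the starting control $\|w\|_{L^{2}(d\mu(t))}^{2} = (2\pi)^{m}$, and I would upgrade it to an $L^{\infty}$ bound; taking $-2\log$ then yields the desired lower bound for $u$.

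To set up the iteration, tracing (\ref{Kahler}) gives $\Delta u = m - R$, and a direct computation produces
\begin{equation*}
\Delta w \;=\; \tfrac{w}{4}|\nabla u|^{2} \,+\, \tfrac{R}{2}\,w \,-\, \tfrac{m}{2}\,w \;\geq\; \tfrac{R}{2}\,w \,-\, \tfrac{m}{2}\,w.
\end{equation*}
Multiplying by $w^{2k-1}$ for $k \geq 1$, integrating over $M$ and using integration by parts, I would obtain
\begin{equation*}
\frac{2k-1}{k^{2}}\int_M |\nabla w^{k}|^{2}\,d\mu(t) \,+\, \frac{1}{2}\int_M R\,w^{2k}\,d\mu(t) \;\leq\; \frac{m}{2}\int_M w^{2k}\,d\mu(t).
\end{equation*}
Combined with $R \geq -C_{1}$, this gives a Caccioppoli-type control of $\int |\nabla w^{k}|^{2}$ by $\int w^{2k}$.

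Next I would plug $\phi = w^{k}$ into the Sobolev inequality of Proposition \ref{Ppro}, once more using $R \geq -C_{1}$ to handle the $\tfrac{R}{4}\phi^{2}$ term that appears on the right-hand side. Substituting the energy bound just derived produces a reverse-H\"older estimate of the form
\begin{equation*}
\left(\int_M w^{2k\chi}\,d\mu(t)\right)^{1/\chi} \;\leq\; C_{2}(k+1)\,\int_M w^{2k}\,d\mu(t),
\end{equation*}
with $\chi = m/(m-1)$ and $C_{2}$ depending only on $m$, $C_{1}$, and the Sobolev constants $A$, $B$ of Proposition \ref{Ppro}. Iterating with exponents $k_{j} = \chi^{j}$ produces the classical Moser telescoping product $\prod_{j \geq 0}[C_{2}(k_{j}+1)]^{1/(2k_{j})}$, which converges because $\sum j\chi^{-j} < \infty$. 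Hence $\|w\|_{L^{\infty}} \leq C_{3}\|w\|_{L^{2}} = C_{3}(2\pi)^{m/2}$, and therefore $u \geq -C$ with $C$ depending only on the quantities listed in Proposition \ref{Ppro}.

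The one step that needs care is the bookkeeping of the $R$-term: after combining the Sobolev inequality with the energy estimate, the net coefficient of $\int R\,w^{2k}$ is $\tfrac{1}{4} - \tfrac{k^{2}}{2(2k-1)}$, which is negative for every $k \geq 1$, so I must invoke the scalar-curvature lower bound in the correct direction to keep the right-hand side positive and with mild $k$-dependence. Once that sign issue is sorted out, $C_{2}(k+1)$ grows only sub-geometrically in the iteration, the Moser product converges, and the proof closes.
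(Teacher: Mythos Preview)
Your proposal is correct and follows essentially the same route as the paper: the paper sets $f=e^{-u}$, derives $-\Delta f + Rf \le mf$, multiplies by $f^{p}$, and runs Moser iteration with the Sobolev inequality of Proposition~\ref{Ppro}, using the scalar-curvature lower bound and the normalization $\int_M e^{-u}\,d\mu(t)=(2\pi)^m$ exactly as you do. Your choice $w=e^{-u/2}=\sqrt{f}$ is only a reparametrization of the iteration exponent (your $w^{2k}$ is the paper's $f^{p+1}$ with $k=p+1$), and your careful bookkeeping of the $R$-coefficient is equivalent to the paper's step where it passes from the coefficient $\tfrac{(p+1)^2}{p}$ in front of $\int \tfrac{R}{4}f^{p+1}$ down to $1$ using $R\ge -C_1$.
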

\begin{proof}
Since $\Delta u\,=\,m\,-\,R,$ we have
\begin{equation*}
\begin{split}
\Delta e^{-u}\,=\,-\Delta ue^{-u}+|\nabla u|^2 e^{-u}\,\ge\, (R-m) e^{-u}.
\end{split}
\end{equation*}
Denote $f\,=\,e^{-u}$. We have
\begin{equation}\label{Peq3}
-\Delta f+R f\,\le\, m f.
\end{equation}
Multiplying $f^p$ to both sides of (\ref{Peq3}) and integrating by parts, we have
\begin{equation*}
\frac{4p}{(p+1)^2}\int_M|\nabla f^{\frac{p+1}{2}}|^2d\mu(t)\,+\,\int_MR\,f^{p+1}d\mu(t)\,\le\, m\int_Mf^{p+1}d\mu(t).
\end{equation*}
That is
\begin{equation*}
\int_M|\nabla f^{\frac{p+1}{2}}|^2d\mu(t)\,+\,\frac{(p+1)^2}{p}\int_M\frac{R}{4}\,f^{p+1}d\mu(t)\,\le\, \frac{m(p+1)^2}{4p}\int_Mf^{p+1}d\mu(t).
\end{equation*}
Since $R$ has a uniform lower bound, so
\begin{equation*}
\int_M|\nabla f^{\frac{p+1}{2}}|^2d\mu(t)\,+\,\int_M\frac{R}{4}\,f^{p+1}d\mu(t)\,\le\, C\,p\int_Mf^{p+1}d\mu(t).
\end{equation*}
Then by Proposition \ref{Ppro} and Moser's iteration, we deduce
\begin{equation*}
\sup_{x\in M}f(x)\,\le\, C\int_Mfd\mu(t)\,=\,C\int_Me^{-u}d\mu(t)\,=\,C(2\pi)^m.
\end{equation*}
This provides a pointwise lower bound of $u$.
\end{proof}
Define
\begin{equation*}
\mathcal {W}(g,f,\tau)\,=\,(4\pi \tau)^{-m}\int_Me^{-f}\{2\tau(R+|\nabla f|^2)+f-2m\}d\mu,~~\int_Me^{-f}d\mu\,=\,(4\pi \tau)^m
\end{equation*}
to be  Perelman's $\mathcal{W}$-entropy functional for $g$ as in \cite{Pe}. By directly computing, we have
\begin{equation*}
\frac{\partial}{\partial t}\mathcal{W}(g(t),f(t),\frac{1}{2})\,=\,(2\pi)^{-m} \int_Me^{-f}\left(|\Ric+\nabla \overline{\nabla} f-\omega|^2+|\nabla \nabla f|^2\right)d\mu(t)\ge 0
\end{equation*}
along the following
\begin{equation*}
\begin{split}
&\partial_t f\,=\, -\Delta f\,+\, |\nabla f|^2\,-\, R\,+\,m,~~~\int_Me^{-f}d\mu(t)\,=\,(2\pi)^m,\\
&\partial_t g_{i\bar{j}}(t)\,=\,g_{i\bar{j}}(t)\,-\,R_{i\bar{j}}(t).
\end{split}
\end{equation*}
Define
\begin{equation*}
\mu(g,\tau)\,=\,\inf_{\{f|\int_Me^{-f}d\mu\,=\,(4\pi \tau)^m\}}\mathcal{W}(g,f,\tau).
\end{equation*}
Then
\begin{equation*}
\begin{split}
A_0\,&=\,\mu(g(0),\frac{1}{2})\,\le\, \mu(g(t),\frac{1}{2})\\
&\le\, \int_M(2\pi)^{-m}e^{-u}(R+|\nabla u|^2+u-2n)d\mu(t)\\
&=\,\int_M(2\pi)^{-m}e^{-u}(-\Delta u+|\nabla u|^2+u-n)d\mu(t)\\
&=\,-m+(2\pi)^{-m}\int_Me^{-u}ud\mu(t).
\end{split}
\end{equation*}
On the other hand, let $F=e^{-\frac{f}{2}}(2\pi)^{-\frac{m}{2}}.$ Then $\int_MF^2d\mu(t)=1$.
\begin{equation*}
\begin{split}
\mathcal {W}(g,f,\frac{1}{2})\,&=\,\int_M \left(RF^2+4|\nabla F|^2-F^2\log F^2\right)d\mu-2m-m\log (2\pi)\ge -C.
\end{split}
\end{equation*}
Here we have used the $L^2$-Sobolev inequality along the K\"{a}hler Ricci flow and the uniform lower bound of scalar curvature. The constant depends only on the constants in Proposition \ref{Ppro}. Particularly, we have $A_0\ge -C$. Moreover, the function $xe^{-x}$ is bounded from above. Thus we have
\begin{lemma}
Denote $a\,=\,-(2\pi)^{-m}\int_Me^{-u}ud\mu(t)$, then
\begin{equation*}
|a(t)|\,\le\, C
\end{equation*}
where the constant $C$ depends only on the volume of $g(0)$, a lower bound of $R(g(0))$ and an upper bound of the Sobolev constant $C_s=C_s(M,g(0))$.
\qed
\end{lemma}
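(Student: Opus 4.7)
The plan is essentially to read off the bound from the two inequalities that have already been prepared in the discussion preceding the lemma, supplemented by one elementary observation for the reverse direction. The upper bound on $a(t)$ is implicit in the running computation: choosing $f=u$ as a test function in Perelman's $\mathcal{W}$-functional (admissible because of the normalization $\int_M e^{-u}d\mu(t)=(2\pi)^m$), and using $\Delta u=m-R$, one obtains the chain
$$A_0 \;\le\; \mu(g(t),\tfrac{1}{2}) \;\le\; \mathcal{W}(g(t),u,\tfrac{1}{2}) \;=\; -m + (2\pi)^{-m}\int_M e^{-u}\,u\,d\mu(t).$$
The logarithmic-Sobolev form of $\mathcal{W}$, together with the uniform scalar-curvature lower bound, gives $A_0\ge -C$ with $C$ depending only on the volume of $g(0)$, the Sobolev constant $C_s(M,g(0))$ and $\inf R(g(0))$. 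Rearranging yields $a(t)\le -A_0-m\le C$.

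For the lower bound on $a(t)$, the elementary point is that the function $x\mapsto xe^{-x}$ is bounded above on every half-line $[-C_0,\infty)$: on $[0,\infty)$ by $e^{-1}$, and on $[-C_0,0]$ by $C_0 e^{C_0}$. Lemma \ref{Plem1} provides exactly such a lower bound $u\ge -C_0$ along the flow, with $C_0$ depending only on the quantities in Proposition \ref{Ppro}, so we get the pointwise estimate $u\,e^{-u}\le C$ on $M\times[0,\infty)$. Integrating against $d\mu(t)$ and using that the cohomology class $[\omega(t)]=2\pi c_1(M)$ is fixed along the K\"ahler Ricci flow, so that $\Vol(M,g(t))=\Vol(M,g(0))=V$, we conclude
$$-a(t) \;=\; (2\pi)^{-m}\int_M u\,e^{-u}\,d\mu(t) \;\le\; CV,$$
that is $a(t)\ge -C$.

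The only bookkeeping worth tracing is that every lower bound invoked (for $A_0$, for $u$ pointwise via Lemma \ref{Plem1}, or for the Sobolev-type estimate) ultimately depends only on the triple $(V,C_s(M,g(0)),\inf R(g(0)))$ through Proposition \ref{Ppro}, with the boundedness of $xe^{-x}$ contributing a purely universal constant. There is no real obstacle; the lemma is a direct assembly of the ingredients developed earlier in the section.
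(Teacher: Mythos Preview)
Your argument is correct and follows the paper's own reasoning almost verbatim: the upper bound on $a$ comes from $A_0\le -m-a$ together with $A_0\ge -C$, and the lower bound from the pointwise upper bound on $xe^{-x}$ integrated over $M$. The one superfluous step is your appeal to Lemma~\ref{Plem1}: since $xe^{-x}\le e^{-1}$ for \emph{all} real $x$ (the function is negative for $x<0$), no lower bound on $u$ is needed for this direction, and the paper simply says ``the function $xe^{-x}$ is bounded from above.''
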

The Ricci potential $u(x,t)$ satisfies
\begin{equation*}
\partial_i\partial_{\bar{j}}u\,=\,g_{i\bar{j}}\,-\,R_{i\bar{j}}.
\end{equation*}
Differentiating this, we have
\begin{equation*}
\begin{split}
\partial _i\partial_{\bar{j}}u_t\,&=\,g_{i\bar{j}}\,-\,R_{i\bar{j}}\,+\, \frac{\partial}{\partial t}\partial _i\partial_{\bar{j}}\log {\rm{det}}(g_{i\bar{j}})\\
&=\partial _i\partial_{\bar{j}}(u+\Delta u),
\end{split}
\end{equation*}
which implies
\begin{equation}\label{Peqa}
\frac{\partial}{\partial t}u\,=\,\Delta u+u+\varphi (t).
\end{equation}
However,
\begin{equation*}
0\,=\,\frac{\partial}{\partial t}\int_Me^{-u}d\mu(t)\,=\,\int_Me^{-u}(-\partial_t u+\Delta u)d\mu(t)\,=\,\int_Me^{-u}(-u-\varphi(t))d\mu(t).
\end{equation*}
Thus
\begin{equation*}
\varphi(t)\,=\,-(2\pi)^{-m}\int_Me^{-u}ud\mu(t)\,=\,a.
\end{equation*}
By maximum principle, one can easily prove the following:
\begin{lemma}\label{Plem2}
There is a uniform constant $C$, so that
\begin{equation}\label{Pleq1}
|\nabla u|^2(x,t)\,\le\, C(u+C),
\end{equation}
\begin{equation}\label{Pleq2}
R\,\le\, C(u+C),
\end{equation}
where the constant depends only on \Vol$(M,g(0))$, the $L^2$-Sobolev constant $C_s$ of $g(0)$ and upper bounds of $|R(g(0))|$ and $|\nabla u|(0)$.
\end{lemma}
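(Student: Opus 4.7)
The plan is to apply the parabolic maximum principle to a combination of $|\nabla u|^2$, $R$, and $u$. First, I would derive the evolution equations along the K\"ahler Ricci flow \eqref{Peq2}. Equation \eqref{Peqa} directly gives $(\partial_t-\Delta)u = u+a$, with $|a|\le C$ bounded by the preceding lemma. A direct computation using $\partial_t g^{i\bar j}=-u^{i\bar j}$, the K\"ahler Bochner identity, and the relation $R_{i\bar j}=g_{i\bar j}-u_{i\bar j}$ yields
\[ (\partial_t-\Delta)|\nabla u|^2 \;=\; |\nabla u|^2 - |\nabla\nabla u|^2 - |\nabla\bar\nabla u|^2, \]
while $(\partial_t-\Delta)R = |\Ric|^2-R$ combined with $|\Ric|^2=2R-m+|\nabla\bar\nabla u|^2$ gives
\[ (\partial_t-\Delta)R \;=\; R+|\nabla\bar\nabla u|^2-m. \]

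Next, I would fix $\lambda>1$ large depending on $\sup_t|a(t)|$ and $C_0:=-\inf u$ (bounded by Lemma \ref{Plem1}), and consider the test function $\Phi := |\nabla u|^2+R-\lambda u$. Combining the three evolution identities yields
\[ (\partial_t-\Delta)\Phi \;=\; \Phi - m - \lambda a - |\nabla\nabla u|^2. \]
Applying the parabolic maximum principle to $\Phi$, using the nonpositivity of the Hessian term $-|\nabla\nabla u|^2$, the uniform bound on $a$, and the initial bounds on $|\nabla u|(0)$, $R(0)$, and $u(0)$, one obtains a uniform upper bound $\Phi \le C$ for all $t\ge 0$. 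Rearranging gives $|\nabla u|^2 + R \le \lambda u + C$, and separating the nonnegative $|\nabla u|^2$ together with the uniform lower bound on $R$ produces \eqref{Pleq1} and \eqref{Pleq2} with constants depending only on the data named in the statement.

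The main obstacle is that the right-hand side of the equation for $(\partial_t-\Delta)\Phi$ carries the source term $+\Phi$, so a naive maximum-principle argument would yield only exponentially-in-$t$ growing estimates. The key is to use the preceding uniform estimates on $u$ and $a$ to absorb this source, for instance by multiplying $\Phi$ by an exponential time-weight, or by considering the quotient $\Phi/(u+K)$ for $K$ large; at an interior space-time maximum the logarithmic-gradient contributions cancel, and the remaining inequality can be closed using $-|\nabla\nabla u|^2$ and the bound on $a$, forcing a uniform ceiling on $\Phi$.
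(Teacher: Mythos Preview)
Your computation of the evolution equation for $\Phi=|\nabla u|^2+R-\lambda u$ is correct, and you correctly identify the obstacle: the term $+\Phi$ on the right-hand side gives only an exponentially growing bound, so the linear combination by itself does not yield \eqref{Pleq1}--\eqref{Pleq2}. The real gap lies in your proposed repair. Passing to the quotient $\Psi=\Phi/(u+K)$ one computes, at an interior maximum,
\[
\Psi(K-a)\ \ge\ m+\lambda a+|\nabla\nabla u|^2,
\]
which is only a \emph{lower} bound for $\Psi$ unless one can force $|\nabla\nabla u|^2\gtrsim \Psi^2$. But this is exactly where your choice of $\Phi$ fails: by adding $R$ to $|\nabla u|^2$ you have cancelled the term $-|\nabla\bar\nabla u|^2$ and kept only $-|\nabla\nabla u|^2$. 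The first-order condition $\nabla\Psi=0$ now reads $\nabla|\nabla u|^2+\nabla R-\lambda\nabla u=\Psi\nabla u$, which involves the uncontrolled quantity $\nabla R$, so it cannot be converted into a pointwise lower bound for $|\nabla\nabla u|^2$ in terms of $\Psi$. Moreover $|\nabla\nabla u|^2$ by itself dominates neither $|\nabla|\nabla u|^2|$ (which needs both $\nabla\nabla u$ and $\nabla\bar\nabla u$) nor $(\Delta u)^2$ (which is controlled by $|\nabla\bar\nabla u|^2$). Thus the quotient argument cannot be closed with this $\Phi$.

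The paper avoids this by working with the quotients directly rather than with a linear combination. For \eqref{Pleq1} one takes $H=|\nabla u|^2/(u+B)$; its evolution keeps \emph{both} negative Hessian terms $-(|\nabla\nabla u|^2+|\nabla\bar\nabla u|^2)/(u+B)$, and at the maximum the condition $\nabla H=0$ gives $|\nabla|\nabla u|^2|=|\nabla u|^3/(u+B)$, whence $|\nabla\nabla u|^2+|\nabla\bar\nabla u|^2\ge \tfrac12 H^2$, which closes to $H\le 2(B-a)$. For \eqref{Pleq2} one then considers $K+2H$ with $K=-\Delta u/(u+B)$; the evolution retains $-|\nabla\bar\nabla u|^2/(u+B)$, and $|\nabla\bar\nabla u|^2\ge(\Delta u)^2/m$ closes the estimate. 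In short, the essential point you are missing is that one must preserve the mixed-Hessian term $-|\nabla\bar\nabla u|^2$ in the evolution inequality; your combination $|\nabla u|^2+R$ destroys exactly that term.
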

\begin{proof}
This is essentially a parabolic version of Yau's gradient estimate in \cite{SY}. By Lemma \ref{Plem1}, we have $u(x,t)\ge -C$. Choosing $B=C+1$, then $u(x,t)+B\ge 1$. Let $H=\frac{|\nabla u|^2}{u+B}$. In order to show (\ref{Pleq1}), we only need to estimate an upper bound for $H$. By directly computing, we have
\begin{equation}\label{PeqH}
\begin{split}
(\partial_t-\Delta) H&\,=\,\frac{(B-a)|\nabla u|^2}{(u+B)^2}-\frac{|\nabla \overline{\nabla} u|^2+|\nabla \nabla u|^2}{u+B}+\frac{2\langle \nabla |\nabla u|^2,\nabla u\rangle}{(u+B)^2}-\frac{2|\nabla u|^4}{(u+B)^3}\\
&\,=\,\frac{(B-a)|\nabla u|^2}{(u+B)^2}-\frac{|\nabla \overline{\nabla} u|^2+|\nabla \nabla u|^2}{u+B}+\frac{2\langle \nabla H,\nabla u\rangle}{u+B}.
\end{split}
\end{equation}
For each $T>0$, suppose $H$ attains its maximum at $(x_0,t_0)$ on $M\times [0,T]$. If $t_0=0$, the upper bound of $H$ follows easily by the bound for $|\nabla u|(g(0))$. Assume $t_0>0$. Then at $(x_0,t_0)$ we have
\begin{equation*}
\partial_tH(x_0,t_0)\,\ge\, 0,~~\nabla H(x_0,t_0)\,=\,0,~~\Delta H(x_0,t_0)\,\le\, 0.
\end{equation*}
Substituting these into (\ref{PeqH}), we obtain
\begin{equation}\label{Peq4}
\frac{(B-a)|\nabla u|^2}{u+B}(x_0,t_0)\,\ge\, {|\nabla \overline{\nabla} u|^2(x_0,t_0)\,+\,|\nabla \nabla u|^2}(x_0,t_0).
\end{equation}
On the other hand, since $\nabla H(x_0,t_0)=0$, we have
\begin{equation*}
\nabla |\nabla u|^2(x_0,t_0)\,=\,\frac{|\nabla u|^2\nabla u}{u+B}(x_0,t_0).
\end{equation*}
Thus at $(x_0,t_0)$,
\begin{equation*}
\begin{split}
\frac{|\nabla u|^3}{u+B}&\,=\,|\nabla |\nabla u|^2|\,\le\, |\nabla u|(|\nabla \overline{\nabla} u|+|\nabla \nabla u|)\\
&\,\le\, \sqrt{2}|\nabla u|(|\nabla \overline{\nabla} u|^2+|\nabla \nabla u|^2)^{\frac{1}{2}}.
\end{split}
\end{equation*}
Combining with (\ref{Peq4}), we have
\begin{equation*}
2(B-a)H(x_0,t_0)\,\ge\, H^2(x_0,t_0).
\end{equation*}
Hence $H(x_0,t_0)\le 2(B-a)$. Let $T\to \infty$ and note that $a$ is bounded. We complete the proof of (\ref{Pleq1}).

Now we turn to the proof of (\ref{Pleq2}). Our goal is to prove that $-\Delta u$ is bounded by $C(u+C)$, which yields (\ref{Pleq2}),
since $\Delta u=n-R$. Let $K=\frac{-\Delta u}{u+B}$, where $B$ is a uniform constant as above. Similar computation as before gives that
\begin{equation*}
(\partial_t-\Delta)K \,=\,\frac{|\nabla \overline{\nabla}u|^2}{u+B}+\frac{(-\Delta u)(B-a)}{(u+B)^2}+2\frac{\langle \nabla K,\nabla u\rangle}{u+B}.
\end{equation*}
Combining this with (\ref{PeqH}), we have
\begin{equation*}
(\partial_t-\Delta)(K+2H) =\frac{-|\nabla \overline{\nabla}u|^2-2|\nabla\nabla u|^2}{u+B}+\frac{(-\Delta u+2|\nabla u|^2)(B-a)}{(u+B)^2}+2\frac{\langle \nabla (K+2H),\nabla u\rangle}{u+B}.
\end{equation*}
For each $T>0$, suppose $2H+K$ attains its maximum at $(x_0,t_0)$ on $M\times [0,T]$. If $t_0=0$, the upper bound of $2H+K$ follows easily by the bound for $|\nabla u|(0)$ and $|R|(0)$. Assume $t_0>0$. Then at $(x_0,t_0)$ we have
\begin{equation*}
\frac{(-\Delta u+2|\nabla u|^2)(B-a)}{(u+B)^2}\,\ge\, \frac{|\nabla \overline{\nabla}u|^2+2|\nabla\nabla u|^2}{u+B}\,\ge\,\frac{|\nabla \overline{\nabla}u|^2}{u+B}\,\ge\, \frac{(\Delta u)^2}{m(u+B)}.
\end{equation*}
Thus
\begin{equation*}
\frac{1}{m}(\frac{\Delta u}{u+B})^2+(B-a)\frac{\Delta u}{u+B}\,\le\, \frac{2(B-a)|\nabla u|^2)}{u+B}\,\le\, C.
\end{equation*}
Here we have used the fact that $u+B\ge 1$. Hence $|\frac{\Delta u}{u+B}|(x_0,t_0)\le C$. Now for each $(x,t)\in M\times [0,T]$,
\begin{equation*}
\begin{split}
\frac{-\Delta u}{u+B}(x,t)&\,\le\, \frac{-\Delta u+2|\nabla u|^2}{u+B}(x,t)\\
&\,\le\, \frac{-\Delta u+2|\nabla u|^2}{u+B}(x_0,t_0)\\
&\,=\,\frac{-\Delta u}{u+B}(x_0,t_0)+ \frac{2|\nabla u|^2}{u+B}(x_0,t_0)\,\le\, C,
\end{split}
\end{equation*}
Let $T\to \infty $. We finish the proof.
\end{proof}
\begin{corollary}\label{Pcor}
There exits a constant $C$ depending only on the constant of Lemma \ref{Plem2}, such that,
\begin{equation*}
\begin{split}
&u(y,t)\,\le\, C(\dist_t^2(\hat{x},y)+1),\\
&R(y,t)\,\le\, C(\dist_t^2(\hat{x},y)+1),\\
&|\nabla u|^2(y,t))\,\le\, C(\dist_t^2(\hat{x},y)+1)
\end{split}
\end{equation*}
for all $t>0$ and $y\in M$, where $u(\hat{x},t)=\min_{x\in M}u(x,t).$
\end{corollary}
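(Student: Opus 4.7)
The plan is to bootstrap the pointwise bounds of Lemma \ref{Plem2} into distance-weighted bounds by exploiting a Lipschitz-type estimate for $\sqrt{u+B}$ and combining it with a uniform upper bound for the minimum value of $u(\cdot,t)$. The two non-gradient inequalities for $R$ and $|\nabla u|^{2}$ will then follow immediately from the bound on $u$ via Lemma \ref{Plem2}.

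First I would observe that Lemma \ref{Plem2} gives $|\nabla u|^{2}\le C(u+B)$ with $u+B\ge 1$, which is equivalent to
\begin{equation*}
\bigl|\nabla\sqrt{u+B}\bigr|\,\le\,\tfrac{\sqrt{C}}{2}.
\end{equation*}
Hence $\sqrt{u(\cdot,t)+B}$ is uniformly Lipschitz on $(M,g(t))$ with a constant independent of $t$. Integrating along a minimizing $g(t)$-geodesic from $\hat{x}$ to $y$ gives
\begin{equation*}
\sqrt{u(y,t)+B}\,\le\,\sqrt{u(\hat{x},t)+B}\,+\,\tfrac{\sqrt{C}}{2}\,\dist_{t}(\hat{x},y).
\end{equation*}

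Next I would bound $u(\hat{x},t)$ from above by a uniform constant. Because $[\omega(t)]=2\pi c_{1}(M)$ is preserved along the flow, the volume $\Vol(M,g(t))$ equals the topological constant $V=\Vol(M,g(0))$ for all $t\ge 0$. Since $\hat{x}$ is a minimum point of $u(\cdot,t)$ and $u$ satisfies the normalization $\int_{M}e^{-u}d\mu(t)=(2\pi)^{m}$, we get
\begin{equation*}
(2\pi)^{m}\,=\,\int_{M}e^{-u}\,d\mu(t)\,\le\,e^{-u(\hat{x},t)}\,V,
\end{equation*}
so $u(\hat{x},t)\le\log V-m\log(2\pi)$, a uniform upper bound. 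Combined with the Lipschitz inequality and squaring, this yields $u(y,t)\le C(\dist_{t}^{2}(\hat{x},y)+1)$.

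Finally, plugging this into the two inequalities of Lemma \ref{Plem2}, $R\le C(u+C)$ and $|\nabla u|^{2}\le C(u+C)$, immediately produces the required estimates for $R(y,t)$ and $|\nabla u|^{2}(y,t)$. No step here is really an obstacle; the only conceptual point that must be checked is the time-independence of the volume, which is where the Kähler class assumption $[\omega]=2\pi c_{1}(M)$ plays its role and guarantees that the upper bound on $u(\hat{x},t)$ does not drift with $t$.
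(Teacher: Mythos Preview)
Your proof is correct and follows essentially the same route as the paper: you use Lemma \ref{Plem2} to get a uniform Lipschitz bound on $\sqrt{u+B}$, combine it with the elementary upper bound $u(\hat{x},t)\le\log\bigl(V/(2\pi)^{m}\bigr)$ coming from the normalization $\int_{M}e^{-u}\,d\mu(t)=(2\pi)^{m}$, and then feed the resulting quadratic growth of $u$ back into Lemma \ref{Plem2} to control $R$ and $|\nabla u|^{2}$. The only extra detail you supply is the justification that the volume $V$ is time-independent, which the paper leaves implicit.
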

\begin{proof}
By Lemma \ref{Plem2}, we only need to estimate $u(x,t)$. Actually, by (\ref{Pleq1}), we have
\begin{equation*}
|\nabla \sqrt{u+C}|\le C.
\end{equation*}
Hence we have
\begin{equation*}
\sqrt{u+C}(x,t)\le \sqrt{u+C}(\hat{x},t)+C\dist_t(\hat{x},x),
\end{equation*}
where $u(\hat{x},t)=\min_{x\in M}u(x,t)$. On the other hand, since $\int_Me^{-u}d\mu(t)=(2\pi)^m$, we have
\begin{equation*}
u(\hat{x},t)\le \log (\frac{V}{(2\pi)^m}).
\end{equation*}
So
\begin{equation*}
u(y,t)\le C(\dist_t^2(\hat{x},y)+1).
\end{equation*}
The other two inequalities follow from this and Lemma \ref{Plem2}.
\end{proof}
Notice the results in Corollary \ref{Pcor}. To prove Theorem \ref{Perelman}, it suffices to estimate the diameter upper bound.
Let $B(k_1,k_2)=\{z: 2^{k_1}\le \dist_t(\hat{x},z)\le 2^{k_2}\}$. Consider an annular $B(k,k+1)$. By Corollary \ref{Pcor} we have that $R\le C 2^{2k}$ on $B(k,k+1)$ and note that $B(k,k+1)$ contains at least $2^{2k-1}$ balls of radii $\frac{1}{2^k}.$ By Proposition \ref{Ppro} we have
\begin{equation}\label{Peq5}
\Vol(B(k,k+1))\ge \sum_i \Vol(B(x_i,2^{-k}))\ge C2^{2k-2km},
\end{equation}
where the constant $C$ depends only on the constant in Corollary \ref{Pcor} and constants in Proposition \ref{Ppro}.
\begin{lemma}\label{Plem4}
For each $\epsilon>0$, if $\diam(M,g(t))\ge C_{\epsilon}$, we can find $B(k_1,k_2)$ such that
\begin{equation*}
\begin{split}
\Vol(B(k_1,k_2))&\,<\,\epsilon,\\
\Vol(B(k_1,k_2))&\,\le\, 2^{10m}\Vol(B(k_1+2,k_2-2)).
\end{split}
\end{equation*}
Here we can choose $C_{\epsilon}=2^{\left(\frac{\log(V/C)}{(2m+8)\log 2}+2\right)4^{(\frac{V}{\epsilon}+2)}+1}$ and $C$ is the constant in (\ref{Peq5})
\end{lemma}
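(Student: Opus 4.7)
The plan is a proof by contradiction combining pigeonhole on dyadic shells with a shrinking iteration. Assume $\diam(M,g(t)) \ge C_\epsilon$ but that no pair $(k_1,k_2)$ realizes both conclusions of the lemma.

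\textbf{Initial thin block.} The dyadic shells $B(k,k+1)$ for $0 \le k \le \lfloor \log_2 \diam(M,g(t)) \rfloor - 1$ are pairwise disjoint, so $\sum_k \Vol(B(k,k+1)) \le V$, and at most $V/\epsilon$ of them have volume $\ge \epsilon$. When the number of shells is very large, the remaining thin shells dominate, and a pigeonhole argument produces a block of $w_0$ consecutive shells whose cumulative volume is still $< \epsilon$. Call this block $B(k_1^{(0)},k_2^{(0)})$; the constant $C_\epsilon$ must be large enough that $w_0$ can be taken as large as needed below.

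\textbf{Shrinking iteration.} Define $(k_1^{(j+1)},k_2^{(j+1)}) := (k_1^{(j)}+2,k_2^{(j)}-2)$. Width decreases by $4$ per step, so we can iterate at most $w_0/4$ times. By monotonicity of volume under inclusion, $\Vol(B(k_1^{(j)},k_2^{(j)})) < \epsilon$ throughout, so condition (a) in the lemma always holds. If the lemma fails for every $(k_1,k_2)$, condition (b) must fail at every step, i.e.\ $\Vol(B(k_1^{(j+1)},k_2^{(j+1)})) < 2^{-10m}\Vol(B(k_1^{(j)},k_2^{(j)}))$, and telescoping yields
\begin{equation*}
\Vol(B(k_1^{(j)},k_2^{(j)})) < \epsilon \cdot 2^{-10m j}.
\end{equation*}

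\textbf{Contradiction with \eqref{Peq5}.} Applying \eqref{Peq5} to the innermost dyadic shell of $B(k_1^{(j)},k_2^{(j)})$ gives the lower bound $\Vol(B(k_1^{(j)},k_2^{(j)})) \ge C \cdot 2^{2k_1^{(j)}(1-m)} = C \cdot 2^{2(k_1^{(0)}+2j)(1-m)}$. The upper bound decays in $j$ like $2^{-10mj}$ while the lower bound decays only like $2^{4j(1-m)}$, so for $j$ exceeding some threshold $j^*$ depending on $m$, $\epsilon$, $C$, and $k_1^{(0)}$ the two inequalities are incompatible. This means the iteration cannot run for $j^* + 1$ steps, so a good pair must be found.

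\textbf{Calibration of $C_\epsilon$.} What remains is to verify that $\diam(M,g(t)) \ge C_\epsilon$ forces the pigeonhole step to yield a block with $w_0 \ge 4j^* + 5$, so that the shrinking iteration actually has room to reach contradiction. Iterated pigeonhole around the $\le V/\epsilon$ fat shells, combined with the requirement on $k_1^{(0)}$ coming from the volume lower bound in \eqref{Peq5}, is what produces the doubly exponential form of $C_\epsilon$. The main obstacle is this bookkeeping of constants across the two scales (pigeonhole and shrinking), making sure the two steps are consistent with each other; the shrinking iteration itself is essentially automatic once the initial block is wide enough.
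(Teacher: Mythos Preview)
Your three ingredients --- pigeonhole for small volume, a shrinking iteration $(k_1,k_2)\mapsto(k_1+2,k_2-2)$ with geometric decay $2^{-10m}$, and the shell lower bound \eqref{Peq5} to terminate the iteration --- are exactly those of the paper. But the way you assemble them has a genuine circularity that is \emph{not} mere bookkeeping.

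The issue is your use of fixed-width blocks. After the shrinking step you need, as you note, $j^*$ of order $k_1^{(0)}$ iterations before the decaying upper bound $\epsilon\,2^{-10mj}$ drops below the lower bound $C\,2^{2(k_1^{(0)}+2j)(1-m)}$ from \eqref{Peq5}; hence the initial block must have width $w_0\gtrsim k_1^{(0)}$. But your pigeonhole (partition the first $N$ shells into blocks of width $w_0$ and use $\sum\Vol\le V$) gives no control on \emph{where} the thin block sits: $k_1^{(0)}$ can be as large as $N-w_0$. Feeding $w_0\gtrsim k_1^{(0)}\approx N$ back into the requirement $N> Vw_0/\epsilon$ forces $\epsilon\gtrsim V$, which is vacuous. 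No amount of constant-tracking rescues this; the scheme with uniform-width blocks simply does not close.

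The paper avoids this by choosing the blocks so that their width is automatically proportional to their starting index. Concretely, it works with annuli $B(2k,6k+1)$: here the shrinking iteration runs exactly $k$ times down to the single shell $B(4k,4k+1)$, and comparing $V\ge\Vol(B(2k,6k+1))>2^{10mk}\Vol(B(4k,4k+1))\ge C\,2^{10mk+8k-8km}$ forces $k<k_0:=\tfrac{\log(V/C)}{(2m+8)\log2}+2$. Thus for every $k\ge k_0/2$ some sub-annulus of $B(2k,6k+1)$ satisfies the second conclusion, \emph{independently of $\epsilon$}. The $\epsilon$-condition is then obtained by a separate pigeonhole on the disjoint, geometrically placed annuli $B(k_04^l,k_04^{l+1})$, $l=0,\dots,[V/\epsilon]$: one of them has volume $<\epsilon$, and since $B(2k,6k+1)\subset B(k_04^l,k_04^{l+1})$ for $2k=k_04^l$, the good sub-annulus found above inherits small volume. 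The doubly exponential $C_\epsilon$ comes from needing the diameter to reach the outermost of these geometrically placed annuli, namely $2^{k_04^{[V/\epsilon]+1}+1}$.

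So the key idea you are missing is to take blocks whose dyadic width scales with their dyadic position (equivalently, annuli at geometrically spaced radii), which decouples the shrinking step from $\epsilon$ and breaks the circular dependence.
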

\begin{proof}
Denote  $k_0=\frac{\log(V/C)}{(2m+8)\log 2}+2$ and assume $\diam(M,g)\ge 2^{k_04^{[\frac{V}{\epsilon}]+1}+1}$. Then we will show that for each $ \frac{k_0}{2} \le k\le \frac{k_0}{2}4^{[\frac{V}{\epsilon}]}$, there exits $B(k_1,k_2)$ so that $2k\le k_1<k_2\le 6k+1$ and
\begin{equation*}
\Vol(B(k_1,k_2))\le 2^{10m}\Vol(B(k_1+2,k_2-2)).
\end{equation*}
Otherwise, by (\ref{Peq5})
\begin{equation*}
\begin{split}
V\ge \Vol(B(2k,6k+1))&>2^{10m}\Vol(B(2k+2,6k-1))\\
&> 2^{10mk}\Vol(B(4k,4k+1))\\
&\ge C2^{10mk}2^{8k-8km}=C2^{2km+8k}.
\end{split}
\end{equation*}
Thus
\begin{equation*}
k\le \frac{\log(V/C)}{2(2m+8)\log 2}.
\end{equation*}

On the other hand, there must be some $0\le l\le [\frac{V}{\epsilon}]$ such that
\begin{equation*}
\Vol(B(k_04^l,k_04^{l+1}))<\epsilon.
\end{equation*}
Otherwise,
\begin{equation*}
V\ge \sum_{l=0}^{[\frac{V}{\epsilon}]}\Vol(B(k_04^l,k_04^{l+1}))\ge ([\frac{V}{\epsilon}]+1)\epsilon >V.
\end{equation*}
Getting together all the above arguments will imply the lemma.
\end{proof}

\begin{lemma}\label{Plem3}
For each $0<k_1<k_2<\infty$, there exist $r_1,\,r_2$ and a uniform constant $C$ such that $2^{k_1}\le r_1\le 2^{k_1+1},~2^{k_2-1}\le r_2\le 2^{k_1}$ and
\begin{equation*}
\int_{B(r_1,r_2)}Rd\mu(t)\le C~\Vol(B(k_1,k_2)),
\end{equation*}
where $B(r_1,r_2)=\{z\in M: r_1\le \dist_t(z,\hat{x})\le r_2\}$ and the constant $C$ depends only on the constant in Corollary \ref{Pcor}.
\end{lemma}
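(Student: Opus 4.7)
\medskip

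\noindent\textbf{Proof proposal.} The plan is to exploit the trace identity $\Delta u = m - R$ that comes from taking the trace of (\ref{Kahler}). This rewrites
\begin{equation*}
\int_{B(r_1,r_2)} R \, d\mu(t) \,=\, m \,\Vol(B(r_1,r_2)) \,-\, \int_{B(r_1,r_2)} \Delta u \, d\mu(t),
\end{equation*}
so the task reduces to bounding $\int_{B(r_1,r_2)} \Delta u$ from below. For a.e.\ $r_1, r_2$ the distance spheres $\Sigma_{r_i} = \{\dist_t(\hat{x},\cdot) = r_i\}$ are smooth outside a set of zero $(2m-1)$-dimensional measure (the cut locus of $\hat{x}$ has measure zero), so the divergence theorem yields
\begin{equation*}
\int_{B(r_1,r_2)} \Delta u \, d\mu(t) \,=\, \int_{\Sigma_{r_2}} \partial_\nu u \, d\sigma \,-\, \int_{\Sigma_{r_1}} \partial_\nu u \, d\sigma.
\end{equation*}

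Next I would select the radii by an averaging argument so that these boundary integrals are controlled. Corollary \ref{Pcor} gives $|\nabla u|(y,t) \le C(\dist_t(\hat{x},y) + 1) \le C \cdot 2^{k_1+1}$ on $B(k_1,k_1+1)$, so the co-area formula (applied to the 1-Lipschitz function $\rho = \dist_t(\hat{x},\cdot)$) gives
\begin{equation*}
\int_{2^{k_1}}^{2^{k_1+1}} \left( \int_{\Sigma_r} |\nabla u| \, d\sigma \right) dr \,=\, \int_{B(k_1,k_1+1)} |\nabla u| \, d\mu(t) \,\le\, C \cdot 2^{k_1+1}\, \Vol(B(k_1,k_2)).
\end{equation*}
By the mean-value principle for integrals, some $r_1 \in [2^{k_1}, 2^{k_1+1}]$ satisfies $\int_{\Sigma_{r_1}} |\nabla u|\, d\sigma \le 2C\, \Vol(B(k_1,k_2))$. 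Exactly the same argument on the outer annulus $B(k_2-1,k_2)$ produces $r_2 \in [2^{k_2-1}, 2^{k_2}]$ with $\int_{\Sigma_{r_2}} |\nabla u|\, d\sigma \le 4C \,\Vol(B(k_1,k_2))$. Combining with $|\partial_\nu u| \le |\nabla u|$ and the obvious $\Vol(B(r_1,r_2)) \le \Vol(B(k_1,k_2))$ produces the claimed estimate with a uniform constant.

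The only technical nuisance is justifying the divergence theorem across the cut locus. This is handled either by restricting to radii at which $\Sigma_r$ is smooth outside a negligible set (a generic condition, compatible with the a.e.\ choice supplied by the averaging argument), or, more robustly, by replacing the characteristic function of $B(r_1,r_2)$ with a smooth cutoff $\varphi_\varepsilon = \psi_\varepsilon \circ \dist_t(\hat{x},\cdot)$ that is equal to $1$ on $B(r_1+\varepsilon, r_2-\varepsilon)$, writing $\int \varphi_\varepsilon \Delta u \,d\mu(t) = -\int \langle \nabla \varphi_\varepsilon, \nabla u\rangle\, d\mu(t)$, and passing $\varepsilon \to 0$ after using the co-area inequality above to extract Lebesgue points of the boundary integrals. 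I expect this regularity bookkeeping, rather than any substantive estimate, to be the only friction in the proof.
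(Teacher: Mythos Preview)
Your proof is correct and follows essentially the same route as the paper: rewrite $R = m - \Delta u$, integrate by parts to reduce to boundary flux integrals $\int_{\Sigma_{r_i}} |\nabla u|\,d\sigma$, and then use an averaging argument over the dyadic annuli $B(k_1,k_1+1)$ and $B(k_2-1,k_2)$ together with the gradient bound from Corollary~\ref{Pcor} to pick good radii. The only cosmetic difference is that the paper first selects $r_i$ so that the sphere $S(r_i)$ has area at most $\Vol(B(k_1,k_2))/2^{k_i}$ and then multiplies by the pointwise bound $|\nabla u|\le C\cdot 2^{k_i}$, whereas you average the integrand $|\nabla u|$ directly; the two computations are interchangeable and your extra care with the cut locus is a welcome addition that the paper omits.
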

\begin{proof}
First of all, since
\begin{equation*}
\frac{d}{dr}\Vol(B(r))=\Vol(S(r)),
\end{equation*}
we have
\begin{equation*}
\Vol(B(k_1,k_1+1))=\int_{2^{k_1}}^{2^{k_1+1}}\Vol(S(r))dr.
\end{equation*}
Here $S(r)$ denotes the geodesic sphere of radius $r$ centered at $\hat{x}$ with respect to $g(t)$.

Hence, we can choose $r_1\in [2^{k_1},2^{k_1+1}]$ such that
\begin{equation*}
\Vol(S(r_1))\le \frac{\Vol(B(k_1,k_1+1))}{2^{k_1}}\le \frac{\Vol(B(k_1,k_2))}{2^{k_1}}.
\end{equation*}
Similarly, there exists $r_2\in [2^{k_2-1},2^{k_2}]$ such that
\begin{equation*}
\Vol(S(r_2))\le \frac{\Vol(B(k_2-1,k_2))}{2^{k_1}}\le \frac{\Vol(B(k_1,k_2))}{2^{k_2}}.
\end{equation*}
Next, by integration by parts and Corollary \ref{Pcor},
\begin{equation*}
\begin{split}
|\int_{B(r_1,r_2)}\Delta ud\mu(t)|&\le \int_{S(r_1)}|\nabla u|d\sigma(t)+\int_{S(r_2)}|\nabla u|d\sigma(t)\\
&\le \frac{\Vol(B(k_1,k_2))}{2^{k_1}}C2^{k_1+1}+\frac{\Vol(B(k_1,k_2))}{2^{k_2}}C2^{k_2+1}\\
&\le 4C\Vol(B(k_1,k_2)).
\end{split}
\end{equation*}
Therefore, since $R=-\Delta u+m$, it follows that
\begin{equation*}
\int_{B(r_1,r_2)}Rd\mu(t)\le (m+4)C~\Vol(B(k_1,k_2))
\end{equation*}
proving Lemma \ref{Plem3}. Here $C$ is the constant in Corollary \ref{Pcor}.
\end{proof}
In order to control the diameter of $M$, we only need to show the following:
\begin{lemma}\label{Plem5}
There exists a constant $\epsilon_0>0$. If $0<\epsilon<\epsilon_0$, we can't find $B(k_1,k_2)$ such that
\begin{equation}\label{Peq6}
\begin{split}
\Vol(B(k_1,k_2))&\,<\,\epsilon,\\
\Vol(B(k_1,k_2))&\,\le\, 2^{10m}\Vol(B(k_1+2,k_2-2)).
\end{split}
\end{equation}
Here we can choose
$\epsilon_0=(2\pi)^m e^{6C2^{10m}+A_0+2m}$ and constant $C$ is the constant in Lemma \ref{Plem3} and $A_0=\mu(g(0),\frac{1}{2}).$
\end{lemma}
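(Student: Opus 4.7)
The strategy is a proof by contradiction using the monotonicity of Perelman's $\mathcal{W}$-entropy: since $\mathcal{W}(g(t),f,\frac{1}{2}) \ge \mu(g(t),\frac{1}{2}) \ge A_0$ for every Lipschitz $f$ with $\int_M e^{-f}\,d\mu(t)=(2\pi)^m$, it suffices, assuming some $B(k_1,k_2)$ realises (\ref{Peq6}), to exhibit a single admissible $f$ with $\mathcal{W}(g(t),f,\frac{1}{2})<A_0$. I will take $f=-2\log F$ where $F$ is an explicit cut-off whose $L^2$-normalization forces its height $c$ to be of order $1/\sqrt{\epsilon}$; the contradiction will come from the $-\log c^2$ contribution in the entropy term.

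For the test function, apply Lemma \ref{Plem3} to pick radii $r_1\in[2^{k_1},2^{k_1+1}]$ and $r_2\in[2^{k_2-1},2^{k_2}]$ with $\int_{B(r_1,r_2)}R\,d\mu(t)\le C\cdot\Vol(B(k_1,k_2))$. Let $\phi$ depend only on $\dist_t(\hat x,\cdot)$, vanish outside $[r_1,r_2]$, equal $1$ on $[2^{k_1+2},2^{k_2-2}]$, and interpolate linearly on the two transition intervals; then $|\nabla\phi|$ is supported on two thin annuli and is bounded there by $2^{-k_1-1}$ and $2^{-k_2+2}$ respectively. Put $F=c\phi$ with $c>0$ chosen so that $\int_M F^2\,d\mu(t)=(2\pi)^m$. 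The obvious sandwich $\Vol(B(k_1+2,k_2-2))\le\int\phi^2\,d\mu(t)\le\Vol(B(k_1,k_2))$ together with (\ref{Peq6}) pins down
\begin{equation*}
\frac{(2\pi)^m}{\Vol(B(k_1,k_2))}\le c^2\le\frac{2^{10m}(2\pi)^m}{\Vol(B(k_1,k_2))},
\end{equation*}
so $c^2\ge(2\pi)^m/\epsilon$ while $c^2\cdot\Vol(B(k_1,k_2))\le 2^{10m}(2\pi)^m$.

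For the entropy estimate, writing $e^{-f}=F^2$ and $|\nabla f|^2=4|\nabla F|^2/F^2$ gives $\mathcal{W}(g(t),f,\frac{1}{2})=(2\pi)^{-m}\int_M[RF^2+4|\nabla F|^2-F^2\log F^2]\,d\mu(t)-2m$. Shifting $R$ by its uniform lower bound $R\ge -C_0$, the scalar curvature piece is controlled by $c^2\int_{B(r_1,r_2)}(R+C_0)\,d\mu(t)\le c^2(C+C_0)\Vol(B(k_1,k_2))$, which after dividing by $(2\pi)^m$ is bounded by a fixed multiple of $2^{10m}$. The two transition annuli have volume at most $\Vol(B(k_1,k_2))$ and carry $|\nabla\phi|^2$ bounded by a universal negative power of $2$, so in the useful range $k_1\ge 0$ the gradient piece is also bounded by a fixed multiple of $2^{10m}$. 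For the entropy term, split $F^2\log F^2=F^2\log c^2+c^2\phi^2\log\phi^2$; the first piece integrates to $(2\pi)^m\log c^2$, and the elementary inequality $\phi^2\log\phi^2\ge -1/e$ combined with $c^2\cdot\Vol(B(k_1,k_2))\le 2^{10m}(2\pi)^m$ yields
\begin{equation*}
-\int_M F^2\log F^2\,d\mu(t)\le -(2\pi)^m\log c^2+\frac{(2\pi)^m 2^{10m}}{e}\le (2\pi)^m\bigl(\log\epsilon-m\log(2\pi)+2^{10m}/e\bigr).
\end{equation*}

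Summing the three bounds gives $\mathcal{W}(g(t),f,\frac{1}{2})\le C'\cdot 2^{10m}+\log\epsilon-m\log(2\pi)-2m$ for an explicit $C'$ built from the constants of Lemma \ref{Plem3} and the scalar curvature lower bound; combined with monotonicity $\mathcal{W}\ge A_0$ this yields an explicit positive lower bound on $\epsilon$ of the form $(2\pi)^m\exp(A_0+2m-C'\cdot 2^{10m})$, which serves as the sought $\epsilon_0$ (up to renaming the constant $C$ in the displayed expression for $\epsilon_0$). The technical heart of the argument is the balance between the three contributions: because $F$ must carry $L^2$-mass $(2\pi)^m$ on a set of volume $<\epsilon$, the entropy piece automatically contributes a $\log(1/\epsilon)$ term; the hypothesis $\Vol(B(k_1,k_2))\le 2^{10m}\Vol(B(k_1+2,k_2-2))$ from (\ref{Peq6}) is precisely what keeps $c^2\cdot\Vol(B(k_1,k_2))$ bounded by a constant independent of $\epsilon$, preventing the scalar curvature and gradient terms from inflating with $\epsilon$ and cancelling this logarithmic gain.
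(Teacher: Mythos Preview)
Your argument is correct and essentially identical to the paper's: both build the same normalized cut-off $F=c\phi$ supported on the annulus $B(r_1,r_2)$ furnished by Lemma~\ref{Plem3}, plug it into Perelman's $\mathcal{W}$-functional, and exploit the bound $c^2\Vol(B(k_1,k_2))\le 2^{10m}(2\pi)^m$ (which is exactly where the second inequality in~(\ref{Peq6}) enters) to control the scalar, gradient, and $\phi^2\log\phi^2$ contributions uniformly while the $-\log c^2\le \log\epsilon-m\log(2\pi)$ term drives the contradiction. Your explicit shift $R\mapsto R+C_0$ and your sharper linear-interpolation gradient bound are cosmetic refinements of the paper's presentation, not a genuinely different route.
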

\begin{proof}
Actually, if we can find $B(k_1,k_2)$ such that (\ref{Peq6}) holds. Then we choose $r_1,~r_2$ as in Lemma \ref{Plem3}. Define a cut off function $0\le\phi\le 1$,
 \[\phi(s) = \left\{\begin{array}{l}
 1, ~~2^{k_1+2}\le s\le 2^{k_2-2}, \\
 0, ~~\mbox{outside~}[r_1,r_2].\\
 \end{array} \right.\]
Then $|\phi'|\le 1$ everywhere. Let
$$F(y)\,=\,e^L\phi(\dist_t(\hat{x},y)),$$
where the constant $L$ is chosen so that
$$(2\pi)^m=\int_MF^2d\mu(t)=e^{2L}\int_{B(r_1,r_2)}\phi^2d\mu(t).$$
Since $\Vol(B(r_1,r_2))\le \Vol(B(k_1,k_2))<\epsilon$, thus $L\ge \frac{1}{2}\log \left(\frac{(2\pi)^m}{\epsilon}\right).$

By monotonicity of $\mathcal{W}$-entropy functional, we have
\begin{equation*}
\begin{split}
A_0\,&=\,\mu(g(0),\frac{1}{2})\le \mu(g(t),\frac{1}{2})\\
&\le\, (2\pi)^{-m}\int_M \left(RF^2+4|\nabla F|^2-F^2\log F^2\right)d\mu(t)-2m\\
&=\,(2\pi)^{-m}e^{2L}\int_{B(r_1,r_2)}\left(R\phi^2+4|\phi'|^2-\phi^2\log \phi^2\right)d\mu(t)-2L-2m.
\end{split}
\end{equation*}
By Lemma \ref{Plem3} we have
\begin{equation*}
\begin{split}
e^{2L}\int_{B(r_1,r_2)}R\phi^2d\mu(t)&\le Ce^{2L}\Vol(B(k_1,k_2))\\
&\le C e^{2L}2^{10m} \Vol(B(k_1+2,k_2-2))\\
&\le C2^{10m}\int_{B(r_1,r_2)}F^2d\mu(t)=C2^{10m}(2\pi)^m.
\end{split}
\end{equation*}
On the other hand, using $|\phi'|\le 1$ and $-s\log s\le e^{-1}$,  we have
\begin{equation*}
\begin{split}
e^{2L}\int_{B(r_1,r_2)}\left(4|\phi'|^2-\phi^2\log \phi^2\right)d\mu(t)&\le 5 e^{2L}\Vol(B(k_1,k_2))\\
&\le 5 Ce^{2L}2^{10m} \Vol(B(k_1+2,k_2-2))\\
&\le 5C2^{10m}\int_{B(r_1,r_2)}F^2d\mu(t)=5C2^{10m}(2\pi)^m
\end{split}
\end{equation*}
for $0\le s\le 1$. The above constant $C$ is the uniform constant in Lemma \ref{Plem3}. Therefore,
\begin{equation*}
A_0\le -2(L+m)+6C2^{10m}.
\end{equation*}
Hence we have
\begin{equation*}
\log \left(\frac{(2\pi)^m}{\epsilon}\right)\le 2L\le 6C2^{10m}-A_0-2m.
\end{equation*}
Thus it provides
\begin{equation*}
\epsilon\ge (2\pi)^m e^{6C2^{10m}+A_0+2m}
\end{equation*}
\end{proof}
Combining Lemma \ref{Plem4} and Lemma \ref{Plem5} will finish the proof of Theorem \ref{Perelman}.\qed

\section{A Linear Parabolic Estimate}\label{secThm1}
The main purpose of this section is to prove Theorem \ref{thm1}. We need two lemmas. The following lemma is due to Rugang Ye \cite{Y2} and Qi S. Zhang \cite{Z}, for $n\ge 3$,  see also Proposition \ref{Ppro},
\begin{lemma}\label{Sobolev1}
Let $(M,g(t))$ be a K\"{a}hler Ricci flow with real dimension $n$, $C_1(M)>0$. At time $t=0$, the following $L^2$ Sobolev inequality holds
\begin{equation*}
\left(\int_{M}f(x)^{\frac{2n}{n-2}}d\mu(0)\right)^{\frac{n-2}{n}}\le\, C_s\left(\int_M |\nabla f(x)|^2d\mu(0)+\int_M f^2(x)d\mu(0)\right).
\end{equation*}
Then along the K\"{a}hler Ricci flow we have
\begin{equation*}
\left(\int_{M}f(x)^{\frac{2n}{n-2}}d\mu(t)\right)^{\frac{n-2}{n}}\le\, A\left(\int_M 4|\nabla f(x)|^2+(R(x,t)+C_0)f^2(x)d\mu(t)\right)
\end{equation*}
for all $0\le t\le 1$, where the constants $A,~C_0$ depend only on dimension $n$, Sobolev constant $C_s$, volume $V$ of $g(0)$ and a lower bound of $R(g(0))$.
\end{lemma}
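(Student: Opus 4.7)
Lemma \ref{Sobolev1} is the real-dimensional counterpart of Proposition \ref{Ppro}: it is the same Perelman-type Sobolev inequality along the K\"{a}hler Ricci flow, phrased with real dimension $n$ instead of complex dimension $m$. The argument follows Ye \cite{Y2} and Zhang \cite{Z}: transport the Sobolev inequality along the \emph{unnormalized} Ricci flow using the monotonicity of Perelman's $\mu$-functional, and then rescale back to $g(t)$ via Lemma \ref{Ple1}. I would organize the proof in three steps.

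\textbf{Step 1 (Reduction to the unnormalized flow).} Using Lemma \ref{Ple1}, set $\hat{g}(s) = (1-s)g(-\log(1-s))$, which solves $\partial_s \hat{g} = -\Ric(\hat{g})$. For $t \in [0,1]$ the parameter $s = 1-e^{-t}$ lies in the bounded interval $[0, 1-e^{-1}]$, and the volume form, gradient and scalar curvature rescale by explicit bounded powers of $1-s$. Thus any Sobolev inequality along $\hat{g}(s)$ pulls back to one for $g(t)$ after absorbing a bounded conformal factor, so it is enough to prove the claim for $\hat{g}(s)$ with constants depending only on the stated data.

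\textbf{Step 2 (Lower bound for $\mu$).} The hypothesis is the $L^2$-Sobolev inequality on $(M, g(0))$. A standard Jensen/interpolation argument, using the volume $V$ and the negative lower bound of $R(g(0))$, upgrades this to a logarithmic Sobolev inequality on $g(0)$, which in turn yields
\[
\mu(\hat{g}(0), \tau) \,\ge\, -C_1
\]
for $\tau$ in any fixed bounded interval, where $C_1$ depends only on $n$, $C_s$, $V$, and the lower bound of $R(g(0))$. Perelman's monotonicity formula, applied to $s \mapsto \mu(\hat{g}(s), \tau_0 - s)$ along the unnormalized flow, then propagates this lower bound uniformly to all $s \in [0, 1-e^{-1}]$ and $\tau$ in a fixed range.

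\textbf{Step 3 (From $\mu$-bound back to Sobolev).} The reverse Gross-type implication, carried out in \cite{Y2, Z}, converts the uniform lower bound on $\mu$ into an $L^2$-Sobolev inequality on $\hat{g}(s)$ of the form
\[
\left(\int_M F^{\frac{2n}{n-2}}\,d\hat{\mu}(s)\right)^{\frac{n-2}{n}} \,\le\, A_0\int_M \bigl(4|\nabla F|^2 + \hat{R}\,F^2\bigr)\,d\hat{\mu}(s) + B_0\int_M F^2\,d\hat{\mu}(s),
\]
with $A_0, B_0$ depending only on $C_1$ and $n$. Pulling this back to $g(t)$ via Step 1 produces the inequality of Lemma \ref{Sobolev1}, with $C_0$ absorbing $B_0$ and the bounded rescaling factor. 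The main technical obstacle is the last step---extracting a genuine Sobolev inequality with the scalar curvature term intact from an entropy bound---but this is exactly the content of \cite{Y2, Z} and requires only a careful verification that the resulting constants collapse to the dependencies listed in the statement.
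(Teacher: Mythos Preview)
Your proposal is correct and follows essentially the same route the paper indicates: the paper does not prove Lemma~\ref{Sobolev1} in detail but attributes it to Ye~\cite{Y2} and Zhang~\cite{Z} and points to Proposition~\ref{Ppro}, whose proof is precisely the three-step scheme you outline---pass to the unnormalized flow via Lemma~\ref{Ple1}, use the monotonicity of Perelman's $\mathcal{W}$-functional to propagate a $\mu$-lower bound from $t=0$, and invoke the Ye--Zhang argument to convert this back into a Sobolev inequality with the scalar curvature term. One minor sharpening: the paper's rescaling in Proposition~\ref{Ppro} actually produces a factor $Be^{-t}$ on the $L^2$ term, so the constants are in fact uniform for all $t\ge 0$ (see the Remark after Lemma~\ref{Sobolev2}), not merely for $t\in[0,1]$ as your boundedness-of-$(1-s)$ argument gives; but since the lemma only claims $t\le 1$, your version suffices.
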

For $n=2$, the following lemma follows from Theorem 1 and Theorem 2 of \cite{Hs}, Lemma 4.1 of \cite{Y2} and a standard argument of \cite{BCLS}.
\begin{lemma}\label{Sobolev2}
Let $(M,g(t))$ be a K\"{a}hler Ricci flow with real dimension $2$, $C_1(M)>0$. At time $t=0$, the following $L^1$ Sobolev inequality holds
\begin{equation*}
\left(\int_{M}f(x)^{2}d\mu(0)\right)^{\frac{1}{2}}\,\le\, C_s\left(\int_M |\nabla f(x)|d\mu(0)+\int_M |f(x)|d\mu(0)\right).
\end{equation*}
Then along the K\"{a}hler Ricci flow holds
\begin{equation}\label{eqn7}
\left(\int_{M}f(x)^{\frac{2n_0}{n_0-2}}d\mu(t)\right)^{\frac{n_0-2}{n_0}}\,\le\,  A\left(\int_M 4|\nabla f(x)|^2+(R(x,t)+C_0)f^2(x)d\mu(t)\right)
\end{equation}
for all $0\le t\le 1$ and $n_0>2$, where the constants $A,~C_0$ depend only on constant $n_0$, Sobolev constant $C_s$, volume $V$ of $g(0)$ and a lower bound of $R(g(0))$ .
\end{lemma}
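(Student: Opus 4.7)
The plan is to combine Perelman's $\mathcal W$-entropy monotonicity with the BCLS dimensional-interpolation technique and with Hsu's two-dimensional Sobolev control along Ricci flow. The three ingredients named in the statement correspond exactly to these three steps.

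First, starting from the $L^1$ Sobolev inequality on $g(0)$ and invoking Theorems 1 and 2 of \cite{Hs}, one obtains a uniform $L^1$ Sobolev inequality for $g(t)$ on the interval $0\le t\le 1$, with constants depending only on $C_s$, $V$, and a lower bound of $R(g(0))$. Applying this inequality to $f^2$ as in Lemma 4.1 of \cite{Y2} and using Young's inequality to absorb the product $\|f\|_2\|\nabla f\|_2$, one arrives at an $L^2$ Sobolev inequality of ``dimension $4$'' type for $g(t)$, that is $\|f\|_4 \le C(\|\nabla f\|_2 + \|f\|_2)$ with $C$ controlled by the prescribed data.

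Second, I would establish a defective log-Sobolev inequality along the flow of the form
\begin{equation*}
\int_M F^2 \log F^2 \, d\mu(t) \,\le\, \sigma \int_M \bigl(4|\nabla F|^2 + R\,F^2\bigr) d\mu(t) \,+\, \beta(\sigma)
\end{equation*}
for every $\sigma>0$ and every $F$ with $\int_M F^2\,d\mu(t)=1$. This step runs parallel to the computation used to lower-bound the $\mathcal W$-functional in the proof of Proposition \ref{Ppro} and Lemma \ref{Plem1}: monotonicity of $\mu(g(t),\tau)$ along the flow transports the log-Sobolev information from $g(0)$ (where it is provided by the $L^1$ Sobolev inequality via the step above and Jensen) to $g(t)$, with $\beta(\sigma)$ depending only on $\sigma$ and on the initial data.

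Third, I would apply the standard BCLS argument \cite{BCLS}, which is a semigroup recipe for upgrading a defective log-Sobolev inequality with arbitrarily small coefficient in front of the Dirichlet form into a ``fake-dimensional'' Sobolev inequality of any prescribed dimension $n_0>2$. The output has exactly the form displayed in (\ref{eqn7}), and the lower bound on $R(g(t))$ (which is inherited from the lower bound on $R(g(0))$ via the maximum principle for the scalar curvature evolution) gets absorbed into the constant $C_0$ so that $R+C_0$ is bounded below by a positive constant and plays the role of a nonnegative potential.

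The main obstacle is the bookkeeping of how the constants $A$ and $C_0$ depend only on $n_0, C_s, V$, and a lower bound of $R(g(0))$; in particular, one must track the BCLS conversion carefully so that the constants remain finite as $n_0 \to 2^+$ (they will typically blow up, but for fixed $n_0>2$ are controlled), and one must ensure that Hsu's transport of the $L^1$ Sobolev constant across $[0,1]$ does not introduce any hidden dependence on higher-order curvature bounds of the initial metric.
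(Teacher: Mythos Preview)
Your overall architecture---defective log-Sobolev along the flow, then the BCLS conversion to a Sobolev inequality of artificial dimension $n_0>2$---is exactly the paper's route. However, your Step~1 misreads the cited inputs. Hsu's Theorems~1 and~2 do \emph{not} transport the $L^1$ Sobolev inequality along the flow; rather, starting from the initial $L^1$ Sobolev inequality (which at $t=0$ yields a log-Sobolev inequality and hence a lower bound on Perelman's $\mu$-functional), they deliver directly the defective log-Sobolev inequality of your Step~2 for all $t\in[0,1]$. In other words, your Steps~1 and~2 collapse into a single step whose content is precisely the $\mathcal W$-entropy monotonicity you describe in Step~2; Hsu's theorems package exactly that argument. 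The claimed ``dimension-$4$ $L^2$ Sobolev inequality for $g(t)$'' in Step~1 is neither what Hsu provides nor what is needed---indeed, if you had it you would already be done for $n_0\le 4$ without invoking BCLS at all, which signals the redundancy.

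The paper also inserts one algebraic maneuver you omit: after obtaining the log-Sobolev inequality with free parameter $\sigma$, it minimizes over $\sigma$ to replace the right-hand side by $\log\bigl(C\int_M(4|\nabla f|^2+(R+C_0)f^2)\,d\mu\bigr)$, and then uses $n_0/2\ge 1$ to multiply through by $n_0/2$ (having chosen $C_0$ large enough that the argument of the logarithm is at least $1$). This puts the log-Sobolev inequality in the precise ``dimension-$n_0$'' form to which the BCLS recipe (their Section~10.2) applies with Sobolev exponent $\frac{2n_0}{n_0-2}$. Your Step~3 would then proceed as you say.
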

\noindent \textit{Outline of the proof.} Consider the flow (\ref{Peq1}). By Theorem $1$ and Theorem $2$ of \cite{Hs}, for all $0<s\le 1,~\sigma >0$, $f\in C^{\infty}(M,\hat{g}(s))$ and $||f||_{L^2(M,\hat{g}(s))}=1$, we obtain
\begin{equation}\label{LSoeq}
\int_Mf^2\log f^2d\hat{\mu}(s)\le \sigma \int_M\left(4|\nabla f(x)|^2_{\hat{g}}+\hat{R} f^2(x)\right)d\hat{\mu}(s)-\log \sigma+C.
\end{equation}
Taking the minimum of the right hand side respect to $\sigma$, then for all $n_0>2$, we have
\begin{equation*}
\begin{split}
\int_Mf^2\log f^2d\hat{\mu}(s)&\le  \log\left(C\int_M\left(4|\nabla f(x)|^2_{\hat{g}}+(\hat{R} +C_0)f^2(x)\right)d\hat{\mu}(s)\right) \\
&\le \frac{n_0}{2} \log\left(C\int_M\left(4|\nabla f(x)|^2_{\hat{g}}+(\hat{R} +C_0)f^2(x)\right)d\hat{\mu}(s)\right).
\end{split}
\end{equation*}
The following argument is standard, see section 10.2 of \cite{BCLS} and Proposition \ref{Ppro}.\qed
\begin{remark}
One can easily check that the constants $A$ in Lemma \ref{Sobolev1} and \ref{Sobolev2} do not depend on the time $t$, since the constant $C$ in (\ref{LSoeq}) doesn't depend on $t$.  However, here we only need $t\le 1$.
\end{remark}
Now we are ready to prove theorem \ref{thm1}:

 \noindent\textit{Proof of Theorem \ref{thm1}.}\\
 Case $n\ge3$: Let $f^p,~p\ge 1$ be a test function in (\ref{eqn1}). We have
 $$\int_M f^p f_t d\mu(t)-\int_Mf^p\Delta f d\mu(t)\,\le\, a \int_M f^{p+1}d\mu(t).$$
 Integration by parts, we have
 $$\frac{1}{p+1}\int_M(f^{p+1})_td\mu(t)+\frac{4p}{(p+1)^2}\int_M |\nabla f^{\frac{p+1}{2}}|^2d\mu(t)\le a \int_M f^{p+1}d\mu(t).$$
 Moreover, since $\partial_td\mu(t)=\Delta ud\mu(t)=(\frac{n}{2}-R)d\mu(t)$,
\begin{equation*}
\begin{split}\frac{1}{p+1}\partial_t\int_Mf^{p+1}d\mu(t)+\frac{1}{p+1}\int_MRf^{p+1}d\mu(t)&+\frac{4p}{(p+1)^2}\int_M |\nabla f^{\frac{p+1}{2}}|^2d\mu(t)\\
&\le (a+\frac{n}{p+1}) \int_M f^{p+1}d\mu(t).
\end{split}
\end{equation*}
  Multiplying both sides $p+1$, since $4p\ge 2(p+1)$ for all $p\ge 1$, we get
 $$\partial_t\int_Mf^{p+1}d\mu(t)+\int_MRf^{p+1}d\mu(t)+2\int_M |\nabla f^{\frac{p+1}{2}}|^2d\mu(t)\le (a(p+1)+n) \int_M f^{p+1}d\mu(t).$$
Since scalar curvature has a lower bound $-C_0$, then
\begin{equation}\label{eqn2}
\begin{split}\partial_t\int_Mf^{p+1}d\mu(t)&+\frac{1}{2}\left(\int_M(R+C_0)f^{p+1}d\mu(t)+4|\nabla f^{\frac{p+1}{2}}|^2d\mu(t)\right)\\
&\le [a(p+1)+n+C_0]\int_M f^{p+1}d\mu(t).
\end{split}
\end{equation}
For any $1>\sigma>\tau>0$, let
 \[\psi(t) = \left\{\begin{array}{l}
 0, ~~0\le t\le \tau \\
 \frac{t-\tau}{\sigma-\tau},~~\tau\le t \le \sigma \\
 1, ~~\sigma\le t\le 1\\
 \end{array} \right.\]
Multiplying (\ref{eqn2}) by $\psi$, we obtain
\begin{equation*}
\begin{split}\partial_t\left(\psi\int_Mf^{p+1}d\mu(t)\right)+\frac{1}{2}\psi\left(\int_M(R+C_0)f^{p+1}d\mu(t)+4|\nabla f^{\frac{p+1}{2}}|^2d\mu(t)\right)\\
\le [(a(p+1)+n+C_0)\psi+\psi '] \int_M f^{p+1}d\mu(t).
\end{split}
\end{equation*}
Integrating this with respect to $t$ we get
\begin{equation*}
\begin{split}\sup_{\sigma \le t\le 1}\int_{M}f^{p+1}d\mu(t)+\frac{1}{2}\left(\int_\sigma^1\int_M[(R+C_0)f^{p+1}d\mu(t)+4|\nabla f^{\frac{p+1}{2}}|^2]d\mu(t)dt\right)\\
\le [(a(p+1)+n+C_0)+\frac{1}{\sigma-\tau}] \int_\tau^1\int_M f^{p+1}d\mu(t)dt.
\end{split}
\end{equation*}
Applying the $L^2$ Sobolev inequality along the K\"{a}hler Ricci flow (Lemma \ref{Sobolev1}), we deduce
\begin{equation*}
\begin{split}&\int_\sigma^1\int_M f^{(p+1)(1+\frac{2}{n})}d\mu(t)dt\\
&\le\int_\sigma^1\left(\int_M f^{p+1}d\mu(t)\right)^{\frac{2}{n}}\left(\int_M f^{(p+1)\frac{n}{n-2}}d\mu(t)\right)^{\frac{n-2}{n}}dt\\
&\le \sup_{\sigma \le t\le 1}\left(\int_M f^{p+1}d\mu(t)\right)^{\frac{2}{n}}\int_\sigma^1A\left(\int_M[(R+C_0)f^{p+1}d\mu(t)+4|\nabla f^{\frac{p+1}{2}}|^2]d\mu(t)\right)dt\\
&\le 2A[(p+1)a+n+C_0+\frac{1}{\sigma-\tau}]^{1+\frac{2}{n}}\left(\int_\tau^1\int_M f^{p+1}d\mu(t)dt\right)^{1+\frac{2}{n}}.
\end{split}
\end{equation*}
We put
$$H(p,\tau)=\left(\int_\tau^1\int_Mf^pd\mu(t)dt\right)^\frac{1}{p},~~for~ any ~p\ge 2,0<\tau<1.$$
Thus
\begin{equation*}
H(p(1+\frac{2}{n}),\sigma)\le (2A)^{\frac{1}{p(1+\frac{2}{n})}}[pa+n+C_0+\frac{1}{\sigma-\tau}]^{\frac{1}{p}}H(p,\tau).
\end{equation*}
Fix $0<t_0<t_1<1,p_0\ge 2$ and set $\chi=1+\frac{2}{n},~p_k=p_0\chi^k,~\tau_k=t_0+(1-\frac{1}{\chi^k})(t_1-t_0)$. Then we have
\begin{equation*}
H(p_{k+1},\tau_{k+1})\le (2A)^{\frac{1}{p_{k+1}}}[p_ka+n+C_0+\frac{1}{t_1-t_0}\frac{\chi}{\chi-1}\chi^k]^{\frac{1}{p_k}}H(p_k,\tau_k).
\end{equation*}
By iteration, we have
\begin{equation*}
\begin{split}
&H(p_{m+1},\tau_{m+1})\\
&\le(2A)^{\sum_{k=0}^m\frac{1}{p_{k+1}}}[p_0a+n+C_0+\frac{1}{t_1-t_0}\frac{\chi}{\chi-1}]^{\sum_{k=0}^m\frac{1}{p_k}}
\chi^{\sum_{k=0}^m\frac{k}{p_k}}H(p_0,\tau_0).
\end{split}
\end{equation*}
Letting $m\to \infty$, we obtain
\begin{equation*}
H(p_{\infty},\tau_{\infty})\le C_1[p_0a+n+C_0+\frac{n+2}{2(t_1-t_0)}]^{\frac{n+2}{2p_0}}H(p_0,\tau_0)
\end{equation*}
for all $p_0\ge 2$. That is
\begin{equation*}
\sup_{(x,t)\in M\times[t_1,1]}|f(x,t)|\le C_1[p_0a+n+C_0+\frac{n+2}{2(t_1-t_0)}]^{\frac{n+2}{2p_0}}\left(\int_{t_0}^1\int_Mf^{p_0}d\mu(t)dt\right)^\frac{1}{p_0}.
\end{equation*}
Since $0<t_0<t_1<1$, we have
\begin{equation*}
\sup_{(x,t)\in M\times[t_1,1]}|f(x,t)|\le \frac{C_2}{(t_1-t_0)^{\frac{n+2}{2p_0}}}\left(\int_{t_0}^1\int_Mf^{p_0}d\mu(t)dt\right)^\frac{1}{p_0}, for~all~p_0\ge 2,
\end{equation*}
where $C_2$ depending only on $p_0,a$, dimension $n$, Sobolev constant $C_s$, volume $V$ and a lower bound of $R(g(0))$.
For $0<p<2$, we set
$$h(s)=\sup_{(x,t)\in M\times[s,1]}|f(x,t)|.$$
So
\begin{equation*}
\begin{split}
h(t_1)&\le \frac{C_2}{(t_1-t_0)^{\frac{n+2}{4}}}\left(\int_{t_0}^1\int_Mf^{2}d\mu(t)dt\right)^\frac{1}{2}\\
&\le h(t_0)^{\frac{1}{2}(2-p)}\frac{C_2}{(t_1-t_0)^{\frac{n+2}{4}}}\left(\int_{t_0}^1\int_Mf^{p}d\mu(t)dt\right)^\frac{1}{2}\\
&\le\frac{1}{2}h(t_0)+\frac{C_3}{(t_1-t_0)^{\frac{n+2}{2p}}}\left(\int_{t_0}^1\int_Mf^{p}d\mu(t)dt\right)^\frac{1}{p}.
\end{split}
\end{equation*}
By a iteration lemma(Lemma 4.3 of \cite{HL}), we get
\begin{equation*}
h(t_1)\le\frac{C_4}{(t_1-t_0)^{\frac{n+2}{2p}}}\left(\int_{0}^1\int_Mf^{p}d\mu(t)dt\right)^\frac{1}{p}, for~all~0<t_0<t_1<1,p>0.
\end{equation*}
Letting $t_0\to 0$ we have
\begin{equation*}
h(t_1)\le\frac{C_4}{t_1^{\frac{n+2}{2p}}}\left(\int_{0}^1\int_Mf^{p}d\mu(t)dt\right)^\frac{1}{p},
\end{equation*}
where constant $C_4$ depends only on $p,~a$, dimension $n$, Sobolev constant $C_s$, volume $V$ and a lower bound of $R(0)$.
\\
\noindent Case $n=2$: we only need to replace Lemma \ref{Sobolev1} with Lemma \ref{Sobolev2}.\qed
\begin{remark}
From the above proof, one can find that it also holds for Ricci flow on real Riemannian manifold.
\end{remark}

\section{Proof of Theorem \ref{main} }\label{secThm2}
The purpose of this section is to prove Theorem \ref{main}. This mainly bases on the linear parabolic estimate in Theorem \ref{thm1}. Since along the K\"{a}hler Ricci flow (\ref{Kahler}) we have
the evolution equations, see\cite{ST}
$$(\partial_t-\Delta)R=|\nabla\bar{\nabla}u|^2+R-\frac{n}{2},$$
$$(\partial_t-\Delta)|\nabla u|^2=-|\nabla\nabla u|^2-|\nabla\bar{\nabla}u|^2+|\nabla u|^2.$$
Hence
\begin{equation*}
(\partial_t-\Delta)(R+|\nabla u|^2)=-|\nabla\nabla u|^2+R+|\nabla u|^2-\frac{n}{2}\le R+|\nabla u|^2.
\end{equation*}
Applying Theorem \ref{thm1}, we have
\begin{equation}\label{eqn4}
\sup_{x\in M}|R+|\nabla u|^2|(x,t)\le \frac{C}{t^{\frac{n_0+2}{2}}}\int_0^1\int_M|R+|\nabla u|^2|d\mu(t)dt.
\end{equation}
for all $0<t<1$, where constant $C$ depends only on dimension $n$, Sobolev constant $C_s$ of $g(0)$ and a lower bound of $R(g(0))$. Since Ricci curvature lower bound $-K$, volume lower bound $V_0$ and diameter upper bound $d$ can deduce the $L^2$ Sobolev inequality(see Theorem {{3.2}} of \cite{H} or \cite{L}). So $C$ depends only on $n,~K,~V_0$ and $d$.

In order to get an upper bound of $R$, it suffices to estimate $\int_0^1\int_M|R+|\nabla u|^2|d\mu(t)dt$.\\
Since $\Delta u=\frac{n}{2}-R\le \frac{n}{2}+C_0$, then
\begin{equation*}
\begin{split}
\int_M|\Delta u|d\mu(t)&=\int_M|\Delta u-(\frac{n}{2}+C_0)+(\frac{n}{2}+C_0)|d\mu(t)\\
&\le\int_M(\frac{n}{2}+C_0)-\Delta u+(\frac{n}{2}+C_0)d\mu(t)\\
&=(n+2C_0)V.
\end{split}
\end{equation*}
Hence $\int_M |R|d\mu(t)\le C_1V$, and then
\begin{equation}\label{scalar}
\int_0^1\int_M|R|d\mu(t)dt\le C_1V.
\end{equation}
On the other hand, normalize $u$ by $\int_Me^{-u}d\mu(t)=(2\pi)^\frac{n}{2}.$
The evolution equation of $u$ is given by (\ref{Peqa}),
$$\partial_t u=\Delta u+u+a, ~~~a=-\int_Mue^{-u}(2\pi)^{\frac{n}{2}}\le C'.$$
where $C'$ depends only on dimension $n$, volume of $g(0)$, Sobolev constant $C_s$ of $g(0)$ and a lower bound of $R(g(0))$.
Thus
\begin{equation}\label{eqn8}
\begin{split}
\partial_t\int_Mud\mu(t)&=\int_M\partial_tud\mu(t)+\int_Mu\partial_td\mu(t)\\
&=\int_Mud\mu(t)+a V+\int_Mu\Delta ud\mu(t)\\
&=\int_Mud\mu(t)+a V-\int_M|\nabla u|^2d\mu(t)\\
&\le\int_Mud\mu(t)+C'V
\end{split}
\end{equation}
and
\begin{equation}\label{eqn5}
\int_Mud\mu(t)\le \left(\int_Mud\mu(0)+C'V\right)e^t-CV\le\left(\int_Mud\mu(0)+C'V\right)e^t.
\end{equation}
Integrating (\ref{eqn8}) over $[0,1]$, we have
\begin{equation*}
\int_0^1\partial_t\int_Mud\mu(t)dt=\int_0^1\int_Mud\mu(t)dt+a V-\int_0^1\int_M|\nabla u|^2d\mu(t)dt.
\end{equation*}
By (\ref{eqn5}) we have
\begin{equation*}
\begin{split}
&\int_0^1\int_M|\nabla u|^2d\mu(t)dt\\
&\le \int_0^1\int_Mud\mu(t)dt+aV +\int_Mud\mu(0)-\int_Mud\mu(1)\\
&\le C'V+(e-1)\left(\int_Mud\mu(0)+C'V\right)+\int_Mud\mu(0)-\int_Mud\mu(1).
\end{split}
\end{equation*}
Since $\int_Me^{-u}d\mu(1)=(2\pi)^{\frac{n}{2}}$, by Jensen's inequality
$$\ln\frac{(2\pi)^{\frac{n}{2}}}{V}= \ln\left(\int_Me^{-u}\frac{d\mu(1)}{V}\right)\ge \frac{1}{V}\int_M-ud\mu(1).$$
Hence we get
\begin{equation}\label{eqn3}
\begin{split}
\int_0^1\int_M|\nabla u|^2d\mu(t)dt&\le C'V+(e-1)\left(\int_Mud\mu(0)+C'V\right)+\int_Mud\mu(0)+V\ln\frac{(2\pi)^{\frac{n}{2}}}{V}\\
&\le e\left(\int_Mud\mu(0)+C'V\right)+V\ln\frac{(2\pi)^{\frac{n}{2}}}{V}.
\end{split}
\end{equation}
Combining (\ref{scalar}) with (\ref{eqn3}), we arrive at
\begin{equation}\label{eqn6}
\int_0^1\int_M|R+|\nabla u|^2|d\mu(t)dt\le C_1V\,+\,e\left(\int_Mud\mu(0)+C'V\right)\,+\,V\ln\frac{(2\pi)^{\frac{n}{2}}}{V}.
\end{equation}
In order to  deduce an upper bound of scalar curvature $R$ and gradient bound of Ricci potential $u$, it suffices to estimate the upper bound of $\int_Mud\mu(0)$.

Actually, since we have a lower bound of Ricci curvature $\Ric(g(0))\ge-Kg(0)$, volume lower bound $V_0>0$ and diameter upper bound $d$ at time $t=0$, we can get a lower bound of the Green function $\Gamma (x,y)\ge -B$ at time $t=0$ (see \cite{BM} and \cite{CL}), where $B$ depends only on $K,~V_0,~d$ and $n$. At time $t=0$,
 since $\int_Me^{-u}d\mu(0)=(2\pi)^{\frac{n}{2}}$, there must be a point $x_0$ such that $u(x_0)\le -\ln\frac{(2\pi)^{\frac{n}{2}}}{V}$. Then by Green's formula
$$u(x_0)=\frac{1}{V}\int_Mud\mu(0)+\int_M\Gamma(x_0,y)(-\Delta u(y))d\mu(0),$$
that is
\begin{equation}\label{eqn10}
\begin{split}
\frac{1}{V}\int_Mud\mu(0)&=u(x_0)+\int_M\Gamma(x_0,y)\Delta u(y)d\mu(0)\\
&=u(x_0)+\int_M(\Gamma(x_0,y)+B)\Delta u(y)d\mu(0)\\
&=u(x_0)+\int_M(\Gamma(x_0,y)+B)(\frac{n}{2}-R)d\mu(0)\\
&\le u(x_0)+(\frac{n}{2}+C_0)\int_M\Gamma(x_0,y)+Bd\mu(0)\\
&\le-\ln\frac{(2\pi)^{\frac{n}{2}}}{V}+(\frac{n}{2}+C_0)BV.
\end{split}
\end{equation}
Substituting (\ref{eqn10}) into (\ref{eqn6}), we have
\begin{equation*}
\int_0^1\int_M|R+|\nabla u|^2|d\mu(t)dt\le C_2
\end{equation*}
where constant $C_2$ depends only on $K,~n,~V_0$ and $d$. Hence we finish the proof.\qed

\section{Partial $C^0$ estimate on $S^2$}\label{secPCE}
In this section, suppose $(M,\omega)$ is a complex dimension $1$ Fano manifold with $R(\omega)\ge R_0>0,~ \mbox{\Vol}(M,\omega)\ge {V_0}>0$. Consider the heat flow(see \cite{T2})
\begin{equation}\label{eqn9}
\frac{\partial f}{\partial s}=\log \left(\frac{(\omega+\partial \overline{{\partial}}f)}{\omega}\right)+f-h_{\omega},~f|_{s=0}=0.
\end{equation}
This is in fact the K\"{a}hler Ricci flow. Here $h_\omega$ is Ricci potential of $\omega$. We will
denote by $f_s$ , $\omega_s$ and $R_s$, the function $f(s,\cdot)$, the K\"{a}hler form $\omega+\partial \overline{{\partial}}f_s$ and scalar curvature $R(\omega_s)$.
\begin{remark}
In this section, the following constants $C$ depend only on $R_0$ and $V_0$. It maybe change line by line. Moreover, constants $C_p, c_s$ will depend on $p$ or $s$.
\end{remark}

Differentiating (\ref{eqn9}), we obtain
\begin{equation*}
\partial \overline{{\partial}}\left(\frac{\partial f_s}{\partial s}\right)=-\Ric(\omega_s)+\omega_s
\end{equation*}
This implies that the Ricci potential $h_{\omega_s}=-\frac{\partial f_s}{\partial s}+c_s$ where
$c_s$ is constant. Since $f_0=0$, we have $c_0=0$.
\begin{lemma}\label{lbe}
There exist constants $V_0^{'},~d$ depending only on $R_0,~V_0$ such that, for all $s>0$, holds
\begin{equation*}
R(\omega_s)\ge0,~V_0\le \mbox{\Vol}(M,\omega_s)\le V_0^{'},~\diam(M,\omega_s)\le d.
\end{equation*}
What's more, for all $s\ge R_0>0$, $R(\omega_s)$ has a uniform upper bound.
\end{lemma}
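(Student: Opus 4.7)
The plan is to dispose of the four statements in order, using that the flow (\ref{eqn9}) preserves the cohomology class and then appealing to the Perelman-type machinery of Section \ref{secPere} and the short-time estimate of Theorem \ref{main}, both of which have dimension-two analogues via Lemma \ref{Sobolev2}.

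First, for the volume, differentiating (\ref{eqn9}) shows $\partial_s\omega_s=-\Ric(\omega_s)+\omega_s$, so $[\omega_s]$ is independent of $s$ and hence $\Vol(M,\omega_s)=\Vol(M,\omega)$ is constant. This gives $\Vol(M,\omega_s)\ge V_0$ at once. For the matching upper bound, Gauss-Bonnet in real dimension two yields $\int_M R(\omega)\,d\mu=8\pi$, and combined with $R(\omega)\ge R_0>0$ this forces $\Vol(M,\omega)\le 8\pi/R_0$; thus $V_0':=8\pi/R_0$ suffices. Second, for $R(\omega_s)\ge 0$, a direct computation (as recorded in Section \ref{secThm2}) together with $u_{i\bar j}=g_{i\bar j}-R_{i\bar j}$ and the identity $|\nabla\bar\nabla u|^2=(R-1)^2$ valid in complex dimension one gives
\begin{equation*}
(\partial_s-\Delta)R\,=\,|\nabla\bar\nabla u|^2+R-1\,=\,R^2-R.
\end{equation*}
The parabolic maximum principle then produces $\tfrac{d}{ds}R_{\min}\ge R_{\min}(R_{\min}-1)$, and since $R=0$ is a stationary solution of the ODE $\dot y=y(y-1)$, the positive initial value $R_{\min}(0)\ge R_0$ cannot cross zero, so $R(\omega_s)\ge 0$ for all $s\ge 0$.

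Third, for the diameter bound, I would rerun the arguments of Section \ref{secPere} in complex dimension one, replacing Proposition \ref{Ppro} by the dimension-two substitute Lemma \ref{Sobolev2} throughout. The initial Sobolev-type constant of $(M,\omega)$ is itself controlled purely in terms of $R_0$ and $V_0$: for instance, Bonnet-Myers applied to the Gauss curvature $R/2\ge R_0/2>0$ gives $\diam(M,\omega)\le \pi\sqrt{2/R_0}$, and Bishop-Gromov then controls all remaining quantities entering the hypothesis of Lemma \ref{Sobolev2}. Running the Perelman chain with these inputs returns a uniform upper bound on $\diam(M,\omega_s)$, on the Ricci potential $u_s$, and on $\|\nabla u_s\|_\infty$, each depending only on $R_0$ and $V_0$.

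Fourth, for the uniform upper bound of $R(\omega_s)$ when $s\ge R_0$, the previous three steps show that at every reference time $s_0\ge 0$ the metric $\omega_{s_0}$ enjoys a uniform Ricci lower bound, a uniform volume, and a uniform diameter, all depending only on $R_0$ and $V_0$. Applying Theorem \ref{main} with initial metric $\omega_{s_0}$ and reading off the bound at time $\tau=\min(1,R_0)/2$ after $s_0$ yields $\sup_x R(x,s_0+\tau)\le C(R_0,V_0)$. Letting $s_0$ vary produces the uniform upper bound of $R(\omega_s)$ on the half line $s\ge R_0$.

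The principal obstacle is step three: transporting the Perelman chain of Section \ref{secPere} (the monotonicity of $\mathcal W$, the $\kappa$-noncollapsing, and the Moser iteration for the lower bound of $u$) into the dimension-two setting requires consistently replacing the $L^2$-Sobolev inequality by its logarithmic counterpart, and verifying that the initial data really is controlled by $R_0$ and $V_0$ alone and not by a prescribed initial Sobolev constant.
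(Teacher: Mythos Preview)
Your step three has a genuine gap, and not the one you flagged. The Perelman chain of Section~\ref{secPere} cannot be launched from $(M,\omega)$ under the stated hypotheses: Lemma~\ref{Plem2} (the Yau-type gradient estimate $|\nabla u|^2\le C(u+C)$ and $R\le C(u+C)$) explicitly requires upper bounds on $|R(g(0))|$ and on $|\nabla u|(0)$ at the initial time, and here you only have $R(\omega)\ge R_0>0$ with no scalar upper bound and no $C^1$ control on the initial Ricci potential. Without Lemma~\ref{Plem2} the subsequent Corollary~\ref{Pcor} and the diameter argument (Lemmas~\ref{Plem4}--\ref{Plem5}) all collapse. The Sobolev transport you worry about is not the real obstacle.

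The paper avoids this by sharpening your step two into a \emph{quantitative} short-time lower bound. From the same evolution $\partial_s R_s=\Delta_s R_s+R_s^2-R_s$ and the elementary inequality $R_s^2-R_s\ge -\tfrac{1}{4}$, the maximum principle gives $R_{\min}(s)\ge R_0-\tfrac{s}{4}$, hence $R(\omega_s)\ge\tfrac{3}{4}R_0>0$ for every $s\in[0,R_0]$. This strict positivity lets Bonnet--Myers be applied \emph{at each time} $s\in[0,R_0]$, yielding the diameter bound on that interval directly, with no appeal to Section~\ref{secPere}; the volume upper bound then follows from Bishop--Gromov (your Gauss--Bonnet argument is also fine). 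With diameter, volume, and Ricci lower bound all controlled at the initial time, Theorem~\ref{main} now supplies the missing scalar upper bound and the $C^1$ bound on $h_{\omega_{R_0}}$ at time $R_0$; only at this point, with $\omega_{R_0}$ as initial metric, does the paper invoke Theorem~\ref{Perelman} to propagate both the diameter bound and the scalar upper bound to all $s\ge R_0$. Your proposed step four (re-applying Theorem~\ref{main} at a sliding base point) would also work once the diameter is secured, but the paper's single application of Theorem~\ref{main} followed by Theorem~\ref{Perelman} is cleaner.
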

\begin{proof}
We have the evolution equation along the K\"{a}hler Ricci flow,
\begin{equation*}
\frac{\partial R_s}{\partial s}=\Delta_s R_s+R_s^2-R_s,
\end{equation*}
Since $R(\omega)\ge R_0>0$, by maximum principle, $R_s\ge 0$. On the other hand, $R^2_s-R_s\ge -\frac{1}{4}$, then
\begin{equation*}
\frac{\partial R_s}{\partial s}\ge\Delta_s R_s-\frac{1}{4}.
\end{equation*}
Applying maximum principle again, we obtain
\begin{equation*}
R(\omega_s)_{min}\ge R_0-\frac{1}{4}s
\end{equation*}
for time $0\le s\le R_0$. Then we have $R(\omega_s)\ge \frac{3}{4}R_0>0$ for time $0\le s\le R_0$. By Meyer's diameter theorem, we obtain the upper bound of diameter. By volume comparison theorem and the upper bound of diameter, we deduce the upper bound of volume.

For $s\ge R_0$, applying Theorem \ref{main}, we have an upper bound of $R(\omega_{R_0})$ and an upper bound of the $C^1$-norm for $h_{\omega_{R_0}}$. Using Perelman's theorem, i.e., Theorem \ref{Perelman}, we get the upper bound of $R(\omega_{s})$ and \diam($M,\omega_s$) for all $s\ge R_0$.
\end{proof}
Normalize $h_{\omega_s}$ by $\int_M e^{h_{\omega_s}} \omega_s=2\pi$. Then along the K\"{a}hler Ricci flow, we have the evolution equation of $h_{\omega_s}$, see \cite{ST}
\begin{equation}\label{eqn11}
\frac{\partial h_{\omega_s}}{\partial s}=\Delta_s h_{\omega_s}+h_{\omega_s}-a,
\end{equation}
where $a\,=\,\frac{1}{2\pi}\int_M h_{\omega_s}e^{h_{\omega_s}}\omega_s$ is uniformly bounded.
\begin{lemma}\label{lpe}
$\forall p\ge 1$, $\forall s\ge 0$, there exists constant $C_p$ depending only on $p,~R_0$ and $V_0$ such that
\begin{equation*}
||h_{\omega_s}||_{L^p(M,\omega_s)}\le C_p.
\end{equation*}
\end{lemma}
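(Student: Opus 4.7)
The plan is to use the Green's function representation for $h_{\omega_s}$ together with the uniform geometric bounds on $(M,\omega_s)$ provided by Lemma \ref{lbe}. Denote by $G_s$ the Green's function of $\Delta_s$, normalized by $\int_MG_s(x,y)\omega_s(y)=0$ and $\Delta_{s,y}G_s(x,y)=\delta_x-V_s^{-1}$. By Lemma \ref{lbe}, $\Ric(\omega_s)\ge 0$, $V_0\le \Vol(M,\omega_s)\le V_0'$, and $\diam(M,\omega_s)\le d$ uniformly in $s$, so $G_s$ admits a uniform pointwise upper bound $G_s\le B$ (its only diagonal singularity, in real dimension $2$, is a negative logarithmic one), and uniform $L^p$ bounds $\|G_s(\cdot,y)\|_{L^p(M,\omega_s)}\le C_p$ independent of $s$ and $y$, both consequences of Bishop-Gromov volume comparison applied to a parametrix construction.

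First, I would establish two-sided uniform bounds on $\bar h_{\omega_s}:=V_s^{-1}\int_M h_{\omega_s}\omega_s$. Jensen's inequality applied to $\int_Me^{h_{\omega_s}}\omega_s=2\pi$ gives $\bar h_{\omega_s}\le\log(2\pi/V_s)\le\log(2\pi/V_0)$. For the lower bound, by the mean value there exists $x_s\in M$ with $e^{h_{\omega_s}(x_s)}\ge 2\pi/V_s$, so $h_{\omega_s}(x_s)\ge\log(2\pi/V_0')$. Applying Green's formula at $x_s$ (with $\Delta_sh_{\omega_s}=R_s-1$ and $\int R_s\omega_s=V_s$, so $\int G_s(x_s,\cdot)(R_s-1)\omega_s=\int G_s(x_s,\cdot)R_s\omega_s$),
\begin{equation*}
h_{\omega_s}(x_s)=\bar h_{\omega_s}+\int_M G_s(x_s,y)R_s(y)\omega_s(y)\le \bar h_{\omega_s}+BV_0',
\end{equation*}
using $G_s\le B$ and $R_s\ge 0$. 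Hence $\bar h_{\omega_s}\ge\log(2\pi/V_0')-BV_0'$ uniformly.

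Next, I would control the oscillation $\|h_{\omega_s}-\bar h_{\omega_s}\|_{L^p}$ via Green's representation and Minkowski's integral inequality:
\begin{equation*}
\|h_{\omega_s}-\bar h_{\omega_s}\|_{L^p(M,\omega_s)}=\left\|\int_MG_s(\cdot,y)(R_s-1)(y)\omega_s(y)\right\|_{L^p}\le \int_M|R_s-1|(y)\,\|G_s(\cdot,y)\|_{L^p}\,\omega_s(y).
\end{equation*}
The $L^1$ bound $\|R_s-1\|_{L^1(\omega_s)}\le\int R_s\omega_s+V_s=2V_s\le 2V_0'$ is immediate from $R_s\ge 0$. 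Combined with $\|G_s(\cdot,y)\|_{L^p}\le C_p$, we obtain $\|h_{\omega_s}-\bar h_{\omega_s}\|_{L^p}\le 2V_0'C_p$, and adding the bound on $\bar h_{\omega_s}$ gives $\|h_{\omega_s}\|_{L^p}\le C_p'$ uniformly in $s$.

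The hard part I expect to be the uniform $L^p$ estimate for $G_s$: although classical for a fixed metric, its uniformity along the flow requires combining the volume comparison under $\Ric\ge 0$ with the uniform diameter and volume bounds from Lemma \ref{lbe} in a careful parametrix construction. An alternative purely parabolic approach---applying the Moser iteration of Theorem \ref{thm1} to $F=h^-+C$, which satisfies $\partial_sF\le\Delta_sF+F$ by Kato's inequality and the evolution (\ref{eqn11})---would give pointwise bounds for $s>0$, but these blow up like $s^{-2}$ as $s\to 0$ and so do not directly yield the uniform estimate including $s=0$; the Green's function approach is thus preferable.
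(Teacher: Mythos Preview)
Your proposal is correct and follows essentially the same route as the paper: Green's representation for $h_{\omega_s}$ using $\Delta_s h_{\omega_s}=R_s-1$, two-sided control of the mean $\bar h_{\omega_s}$ via Jensen and a Green's formula at a special point, and then an $L^p$ bound on the oscillation coming from the $L^1$ bound on $R_s-1$ together with a uniform $L^p$ bound on the Green's function. The only substantive difference is that the paper makes the last step explicit: it records the two-sided pointwise estimate $-C<\Gamma_s(x,y)\le C|\log d_s(x,y)|+C$ (from the uniform $\Ric\ge 0$, volume and diameter bounds of Lemma~\ref{lbe}) and then verifies $\sup_x\int_M|\Gamma_s(x,y)|^p\omega_s(y)\le C_p$ by a dyadic annulus decomposition together with Bishop--Gromov, exactly the ``hard part'' you flagged but left as a black box.
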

\begin{proof}
Since $-\partial \overline{{\partial}}h_{\omega_s}=-\Ric(\omega_s)+\omega_s$, by taking the trace, we get
\begin{equation}\label{eqn14}
-\Delta_s h_{\omega_s}=1-R_s.
\end{equation}
By Lemma \ref{lbe}, we can estimate the Green function $\Gamma_s(x,y)$ of $\Delta_s$ at all time $s$(see also \cite{BM},\cite{CL},\cite{Gr}),
\begin{equation}\label{greenb}
-C<\Gamma_s(x,y)\le C |\log(d(x,y))|+C
\end{equation}
where constant $C$ depends only on $R_0,~V_0$.

Since $\int_M e^{h_{\omega_s}}\omega_s=2\pi$, by Jensen's inequality, we have $\int_M h_{\omega_s}\omega_s\le C$. On the
other hand, by the lower bound estimate of $\Gamma_s(x,y)$, we can estimate the lower bound of $\int_M h_{\omega_s}\omega_s$ . Actually, since
$\int_M e^{h_{\omega_s}}\omega_s=2\pi$, there must be a point $x_s$ such that $h_{\omega_s}(x_s)=\log(\frac{2\pi}{V})$. Then by Green's formula
\begin{equation*}
\begin{split}
\frac{1}{V}\int_M h_{\omega_s}\omega &=h_{\omega_s}(x_s) +\int_M \Gamma_s(x,y)\Delta_s h_{\omega_s}\omega_s\\
&=h_{\omega_s}(x_s) +\int_M (\Gamma_s(x,y)+C)(R_s-1)\omega_s\\
&\ge h_{\omega_s}(x_s) -\int_M (\Gamma_s(x,y)+C)\omega_s\\
&= \log(\frac{2\pi}{V})-CV.
\end{split}
\end{equation*}
Hence, there exists a uniform constant $C$ such that the following holds
\begin{equation*}
|\frac{1}{V}\int_Mh_{\omega_s}\omega_s|\le C.
\end{equation*}
Using Green's formula again, we have
\begin{equation}\label{eqn15}
h_{\omega_s}(x)=\frac{1}{V}\int_Mh_{\omega_s}\omega_s+\int_M\Gamma_s(x,y)(-\Delta_s h_{\omega_s}(y))\omega_s(y).
\end{equation}
Moreover, $||\Delta_s h_{\omega_s}||_{L^1(M,\omega_s)}$ is uniformly bounded. By (\ref{eqn14}) and Gauss-Bonnet theorem,
\begin{equation*}
||\Delta_s h_{\omega_s}||_{L^1(M,\omega_s)}\le V_0^{'}+\int_M R_s\omega_s\le C.
\end{equation*}
Applying Young's inequality to (\ref{eqn15}), we arrive at
\begin{equation*}
||h_{\omega_s}||_{L^p(M,\omega_s)}\le C+\sup_{x\in M}\left(\int_M |\Gamma_s(x,y)|^p\omega_s(y)\right)^{\frac{1}{p}}||\Delta_s h_{\omega_s}||_{L^1(M,\omega_s)}.
\end{equation*}
Claim: $\int_M |\Gamma_s(x,y)|^p\omega_s(y)\le C_p$, for some constant $C_p$ depending only on $p,~R_0,~V_0$.\\

Actually, by inequality (\ref{greenb}), we only need to show $\int_M|\log(d(x,y))|^p\omega_s(y)$ is uniformly bounded.
\begin{equation*}
\begin{split}
\int_M|\log(d(x,y))|^p\omega_s(y)&\le\int_{B(x,1)}|\log(d(x,y))|^p\omega_s(y)+\int_{M\backslash B(x,1)}|\log(d(x,y))|^p\omega_s(y)\\
&\le \sum_{k=0}^{\infty}\int_{B(x,\frac{1}{2^k})\backslash B(x,\frac{1}{2^{k+1}})}|\log(d(x,y))|^p\omega_s(y)+V_0^{'}\log(d)^p\\
&\le \sum_{k=0}^{\infty}|\log(\frac{1}{2^{k+1}})|^p\int_{B(x,\frac{1}{2^k})}\omega_s(y)+V_0^{'}\log(d)^p\\
&\le \sum_{k=0}^{\infty}(\log({2})(k+1))^p \mbox{vol}(B(x,\frac{1}{2^k}))+V_0^{'}\log(d)^p\\
&\le \sum_{k=0}^{\infty}(\log({2})(k+1))^p \frac{c}{4^k}+V_0^{'}\log(d)^p\\
&\le C_p
\end{split}
\end{equation*}
where we have used the volume comparison, so the Claim holds. Hence the $L^p$-norm of $h_{\omega_s}$ is uniformly bounded.
\end{proof}
\begin{lemma}\label{csb}
There exists a constant $C$ depending only on $R_0,V_0$ such that
\begin{equation}
|c_s|\le C(e^s-1),~for~all~s>0
\end{equation}
where $c_s=\frac{\partial f_s}{\partial s}+h_{\omega_s}$.
\end{lemma}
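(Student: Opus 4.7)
The plan is to derive an ordinary differential equation for $c_s$ in the variable $s$ alone, using the fact that $c_s$ is spatially constant, and then integrate it with the initial condition $c_0=0$.

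First, I would differentiate the flow equation (\ref{eqn9}) in $s$. Since in complex dimension $1$ one has $\partial_s \log(\omega_s/\omega) = \Delta_s(\partial_s f_s)$, this gives the evolution
\begin{equation*}
\partial_s^2 f_s \,=\, \Delta_s(\partial_s f_s) \,+\, \partial_s f_s.
\end{equation*}
Combining this with the evolution equation (\ref{eqn11}) for $h_{\omega_s}$ and using $\partial_s f_s = c_s - h_{\omega_s}$, I would compute
\begin{equation*}
\frac{d c_s}{d s} \,=\, \partial_s^2 f_s + \partial_s h_{\omega_s} \,=\, \Delta_s(c_s - h_{\omega_s}) + (c_s - h_{\omega_s}) + \Delta_s h_{\omega_s} + h_{\omega_s} - a \,=\, \Delta_s c_s + c_s - a.
\end{equation*}
The key observation is that $c_s$ depends only on $s$, so $\Delta_s c_s = 0$, leaving the pure ODE $\tfrac{d}{ds} c_s = c_s - a(s)$.

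With the initial condition $c_0 = 0$ (noted in the excerpt), Duhamel's formula gives
\begin{equation*}
c_s \,=\, -e^s \int_0^s e^{-t}\, a(t)\, dt.
\end{equation*}
Since $a(s)$ is uniformly bounded by a constant $C = C(R_0, V_0)$ (as stated following (\ref{eqn11}), and which follows from Lemma \ref{lpe} together with the fact that $x \mapsto xe^x$ is bounded below), I would estimate
\begin{equation*}
|c_s| \,\le\, C\, e^s \int_0^s e^{-t}\, dt \,=\, C(e^s - 1),
\end{equation*}
which is the desired bound.

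The only nontrivial step is the derivation of $\partial_s c_s = c_s - a$: one must correctly identify that the spatial Laplacian terms from the two evolutions cancel, leaving a scalar ODE. Once this is in hand the conclusion is immediate, and no new analytic input beyond the uniform bound on $a$ (already asserted in the excerpt) is needed.
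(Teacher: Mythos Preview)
Your proof is correct and follows essentially the same approach as the paper: both derive the evolution $\partial_s(\partial_s f_s)=\Delta_s(\partial_s f_s)+\partial_s f_s$, add it to (\ref{eqn11}) to obtain $\partial_s c_s = c_s - a$ (using that $c_s$ is spatially constant), and then integrate with $c_0=0$ and the uniform bound on $a$. You are just a bit more explicit in writing out the Duhamel formula and the cancellation $\Delta_s c_s=0$.
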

\begin{proof}
Differentiating (\ref{eqn9}) we have
\begin{equation}\label{eqn12}
\frac{\partial}{\partial s}\left(\frac{\partial f_s}{\partial s}\right)=\Delta_s \left(\frac{\partial f_s}{\partial s}\right)+\frac{\partial f_s}{\partial s}
\end{equation}
 and combine this with (\ref{eqn11}). We have
\begin{equation*}
\frac{\partial}{\partial s}\left(\frac{\partial f_s}{\partial s}+h_{\omega_s}\right)=\Delta_s\left(\frac{\partial f_s}{\partial s}+h_{\omega_s}\right)+\left(\frac{\partial f_s}{\partial s}+h_{\omega_s}\right)-a.
\end{equation*}
That is
\begin{equation*}
\frac{\partial}{\partial s}c_s=c_s-a.
\end{equation*}
Since $c_0=0$, $a$ is bounded, we can deduce the bound for $c_s$ easily,
\begin{equation*}
|c_s|\le C(e^s-1).
\end{equation*}
\end{proof}
Now we can estimate the K\"{a}hler potential,
\begin{lemma}\label{fsb}
There exists a constant $C$ depending only on $R_0,~V_0$ such that
\begin{equation*}
|f_s(x)|\le C\sqrt{s}
\end{equation*}
for all $s\le 1.$
\end{lemma}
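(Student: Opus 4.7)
The plan is to use the identity $\frac{\partial f_s}{\partial s} = c_s - h_{\omega_s}$, which simply rearranges the definition $c_s = \frac{\partial f_s}{\partial s} + h_{\omega_s}$ from Lemma~\ref{csb}. Since $f_0 \equiv 0$, integrating in $s$ yields
\[
f_s(x) \,=\, \int_0^s c_\tau\, d\tau \,-\, \int_0^s h_{\omega_\tau}(x)\, d\tau.
\]
Lemma~\ref{csb} immediately gives $\bigl|\int_0^s c_\tau\, d\tau\bigr| \le C\int_0^s (e^\tau - 1)\, d\tau \le Cs^2 \le C\sqrt{s}$ for $s\le 1$. The whole work therefore reduces to controlling $\int_0^s |h_{\omega_\tau}(x)|\, d\tau$ pointwise in $x$.

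To obtain a pointwise time-decay bound for $h_{\omega_\tau}$, I would combine the uniform $L^p$ bound of Lemma~\ref{lpe} with the Moser smoothing of Theorem~\ref{thm1}. Using the evolution equation~(\ref{eqn11}) and the uniform bound $|a|\le C$, a short computation using $2ah\le h^2+a^2$ gives, for $F := h_{\omega_s}^2 + K$ with $K \ge \tfrac{1}{3}\sup_\tau a(\tau)^2$,
\[
\frac{\partial F}{\partial s} \,\le\, \Delta_s F \,+\, 3 F
\]
in the weak sense; $F$ is nonnegative and Lipschitz along the flow. Applying Theorem~\ref{thm1} (valid in real dimension $2$ with $n_0>2$ via Lemma~\ref{Sobolev2}) and estimating the spacetime $L^p$ integral by Lemma~\ref{lpe} together with the volume upper bound of Lemma~\ref{lbe}, we obtain for every $p\ge 1$
\[
\sup_{x\in M} |h_{\omega_\tau}(x)|^2 \,\le\, \sup_{x\in M} F(x,\tau) \,\le\, \frac{C_p}{\tau^{(n_0+2)/(2p)}}, \qquad \tau \in (0,1).
\]
Choosing $p$ so large that $\gamma := (n_0+2)/(4p) < 1/2$, we get $|h_{\omega_\tau}(x)| \le C\,\tau^{-\gamma}$ uniformly in $x$, so
\[
\int_0^s |h_{\omega_\tau}(x)|\, d\tau \,\le\, \frac{C}{1-\gamma}\, s^{1-\gamma} \,\le\, C'\sqrt{s}
\]
for $s\le 1$, since $1-\gamma > 1/2$. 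Adding the two pieces yields $|f_s(x)| \le C\sqrt{s}$.

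The main obstacle is verifying that Theorem~\ref{thm1} is applicable with constants depending only on $R_0$ and $V_0$. This requires the $L^1$-Sobolev inequality of Lemma~\ref{Sobolev2} on the initial surface with constant controlled by $R_0, V_0$, a positive lower bound on $R(\omega)$, and an upper bound on $V(\omega)$. The last follows from Gauss--Bonnet: on the $2$-sphere $\int_M R\,\omega = 8\pi$, hence $V(\omega) \le 8\pi/R_0$, and a diameter upper bound for $(M,\omega)$ comes from Myers' theorem. Once these initial quantities are controlled, all the estimates above depend only on $R_0$ and $V_0$ as required.
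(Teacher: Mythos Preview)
Your proof is correct and follows essentially the same strategy as the paper: combine the uniform $L^p$ bounds of Lemma~\ref{lpe} with the parabolic Moser iteration of Theorem~\ref{thm1} to get a pointwise bound $|\partial_s f_s|\le C s^{-\alpha}$ with $\alpha<1$, then integrate in $s$. The only cosmetic difference is that the paper applies Theorem~\ref{thm1} directly to $\partial_s f_s$ via its homogeneous evolution equation~(\ref{eqn12}) (choosing $n_0=3$, $p=5$ to get exactly $\alpha=\tfrac12$), whereas you split $\partial_s f_s=c_s-h_{\omega_s}$ and iterate on $h_{\omega_s}^2+K$ instead; both routes rely on the same two inputs and yield the same conclusion.
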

\begin{proof}
By Lemma \ref{lpe} and Lemma \ref{csb}, we have
\begin{equation*}
||\frac{\partial f_s}{\partial s}||_{L^p}\le C_p, ~for~all~s\le 1.
\end{equation*}
Combining Theorem \ref{thm1} with equation (\ref{eqn12}), we obtain
\begin{equation*}
|\frac{\partial f_s}{\partial s}|\le \frac{C_{p,n_0}}{s^{\frac{n_0+2}{2p}}},~for~all~s\le 1.
\end{equation*}
Choose $n_0=3,~p=5$. We have $|\frac{\partial f_s}{\partial s}|\le \frac{C}{\sqrt{s}}$. Then
\begin{equation*}
|f_t(x)-f_0(x)|\le \int_0^t|\frac{\partial f_s}{\partial s}|ds\le C \sqrt{t}
\end{equation*}
for all $t\le 1$. Noting that $f_0=0$, then we have  $|f_s(x)|\le C\sqrt{s}, $~for~all~$s\le 1$.
\end{proof}
Since
\begin{equation*}
\Ric(\omega_s)-\partial \overline{{\partial}}h_{\omega_s}=\omega_s,
\end{equation*}
we can choose $\omega_se^{h_{\omega_s}}$ as a Hermitian metric of anti-canonical line bundle $K_M^{-1}$ with curvature form $\omega_s$, denoting by $H_{\omega_s}$.
\begin{lemma}\label{hequ}
For all $s\le 1$,$H_{\omega}$ and $H_{\omega_s}$ are equivalent. i.e., there exists a constant $C$ depending only on $R_0,V_0$ such that
\begin{equation*}
\frac{1}{C}H_{\omega}\le H_{\omega_s}\le C H_{\omega}.
\end{equation*}
\end{lemma}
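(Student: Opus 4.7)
The plan is to exploit the fact that both $H_\omega$ and $H_{\omega_s}$ are Hermitian metrics on the same line bundle $K_M^{-1}$ whose curvature forms $\omega$ and $\omega_s = \omega + \partial\bar\partial f_s$ differ by $\partial\bar\partial f_s$. This forces $\log(H_{\omega_s}/H_\omega)$ to be a globally defined function whose $\partial\bar\partial$ equals $-\partial\bar\partial f_s$, so on the compact curve $M$ one expects $\log(H_{\omega_s}/H_\omega) = -f_s + C(s)$ for a constant $C(s)$ that depends only on $s$; all that remains is to identify $C(s)$ and show it is uniformly bounded for $s \le 1$.

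First I would unpack the shorthand ``$\omega_s e^{h_{\omega_s}}$'' into the local coordinate expression $H_{\omega_s} = g_{s,1\bar 1}\, e^{h_{\omega_s}}$, and similarly $H_\omega = g_{1\bar 1}\, e^{h_\omega}$, each of which is easily checked to have the stated curvature. Then $\log(H_{\omega_s}/H_\omega) = \log(g_{s,1\bar 1}/g_{1\bar 1}) + (h_{\omega_s} - h_\omega)$ in local trivializations. Into this I substitute two identities already available in the section: the flow equation (\ref{eqn9}) yields $\log(g_{s,1\bar 1}/g_{1\bar 1}) = \partial_s f_s - f_s + h_\omega$, and the relation $\partial_s f_s = -h_{\omega_s} + c_s$ established just before Lemma \ref{lbe} lets me eliminate $\partial_s f_s$. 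The $h_{\omega_s}$ and $h_\omega$ terms then cancel, leaving the clean identity $\log(H_{\omega_s}/H_\omega) = c_s - f_s$.

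Once that identity is in hand, the equivalence is immediate. For $s\le 1$, Lemma \ref{csb} gives $|c_s| \le C(e^s-1) \le C(e-1)$ and Lemma \ref{fsb} gives $|f_s| \le C\sqrt{s} \le C$, with $C$ depending only on $R_0$ and $V_0$; exponentiating the resulting bound $|c_s - f_s| \le C'$ gives $C'^{-1} H_\omega \le H_{\omega_s} \le C' H_\omega$. I do not expect any substantive obstacle here, as the analytic work has been done in Lemmas \ref{csb} and \ref{fsb}. The only care point is the algebraic bookkeeping in the second step: one must combine the two flow identities in tandem and correctly interpret the paper's shorthand for the Hermitian metric, so that the $h$-terms cancel and only the already-controlled quantities $c_s$ and $f_s$ remain.
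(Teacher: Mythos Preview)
Your proposal is correct and follows essentially the same route as the paper: derive the identity $H_{\omega_s}/H_\omega = e^{c_s - f_s}$ by combining the flow equation (\ref{eqn9}) with the relation $h_{\omega_s} = -\partial_s f_s + c_s$, then invoke Lemmas \ref{csb} and \ref{fsb} to bound $c_s$ and $f_s$ for $s\le 1$. Your curvature-based preamble is a helpful conceptual explanation of why such an identity must exist, but the actual computation and the inputs used are the same as the paper's.
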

\begin{proof}
By equation (\ref{eqn9}), we have
\begin{equation*}
e^{\frac{\partial f_s}{\partial s}-f_s+h_\omega}=\frac{\omega_s}{\omega}.
\end{equation*}
Thus
\begin{equation*}
e^{c_s-f_s}=\frac{\omega_s e^{-\frac{\partial f_s}{\partial s}+c_s}}{\omega e^{h_\omega}}=\frac{H_{\omega_s}}{H_\omega}.
\end{equation*}
By Lemma \ref{csb} and Lemma \ref{fsb}, we conclude
\begin{equation*}
\frac{1}{C}\le\frac{H_{\omega_s}}{H_\omega}\le C
\end{equation*}
for all $s\le 1.$
\end{proof}
Now we turn to prove Theorem \ref{thm2}.\\
\noindent \textit{Proof of Theorem \ref{thm2}.} We will argue by
contradiction. Suppose there exists a family Fano manifolds $(M^i,\omega^i)$ satisfying $R(\omega^i)\ge R_0,~\mbox{\Vol}(M^i,\omega^i)\ge V_0$, but there exist $\{x_i\in M^i\}$ such that
\begin{equation*}
\rho_{\omega^i,l}(x_i)\to 0 ,~\mbox{when}~i\to \infty.
\end{equation*}
Assume $S^i\in H^0(M^i,K^{-l}_{M^i})$ satisfying  $||S^i||^2_{H^{\otimes l}_{\omega_1^i},\omega_1^i}=1$,
\begin{equation*}
||S^i||_{H_{\omega_1^i}}^2(x_i)=
\sup_{||S||^2_{H^{\otimes l}_{\omega_1^i},\omega_1^i}=1}||S||^2_{H^{\otimes l}_{\omega_1^i}}(x_i)=\eta_{\omega_1^i}(x_i).
\end{equation*}
Here $\omega_1^i$ is the K\"{a}hler form at time $t=1$ along the K\"{a}hler Ricci flow with initial metric $\omega^i$ on Fano manifold $M^i$. Then by Lemma \ref{hequ} and Remark \ref{rhoremark},
\begin{equation*}
\begin{split}
\eta_{\omega_1^i}(x_i)&=\frac{||S^i||_{H^{\otimes l}_{\omega_1^i}}^2(x_i)}{||S^i||_{H^{\otimes l}_{\omega^i}}^2(x_i)}\cdot||S||^2_{H^{\otimes l}_{\omega^i},\omega^i}
\cdot\frac{||S^i||_{H^{\otimes l}_{\omega^i}}^2(x_i)}{||S||^2_{H^{\otimes l}_{\omega^i},\omega^i}}\\
&\le \frac{||S^i||_{H^{\otimes l}_{\omega_1^i}}^2(x_i)}{||S^i||_{H^{\otimes l}_{\omega^i}}^2(x_i)}\cdot||S||^2_{H^{\otimes l}_{\omega^i},\omega^i}\cdot \eta_{\omega^i}(x_i)\\
&\le C||S||^2_{H^{\otimes l}_{\omega^i},\omega^i}\cdot \rho_{\omega^i,l}(x_i).
\end{split}
\end{equation*}
Using Remark \ref{rhoremark} again, we obtain
\begin{equation*}\label{eqn13}
\rho_{\omega^i_1,l}(x_i)\le NC||S||^2_{H^{\otimes l}_{\omega^i},\omega^i}\cdot\rho_{\omega^i,l}(x_i).
\end{equation*}
By Lemma \ref{lbe}, we have $0\le R(\omega^i_1)\le C,~{V_0}\le\mbox{\Vol}(M^i,\omega^i_1)\le V_0^{'}$. Then by Moser's iteration we have $||S^i||^2_{H^{\otimes l}_{\omega^i_1}}(x)\le C,$ for all $x\in M^i$(see\cite{T1}). Since $H^{\otimes l}_{\omega_1^i}$ and $H^{\otimes l}_{\omega^i}$ are equivalent, so
\begin{equation*}
||S^i||^2_{H^{\otimes l}_{\omega^i}}(x)\le C
\end{equation*}
Thus
\begin{equation*}
||S||^2_{H^{\otimes l}_{\omega^i},\omega^i}\le CV_0^{'}
\end{equation*}
Hence
\begin{equation*}
\rho_{\omega^i_1,l}(x_i)\le NC\rho_{\omega^i,l}(x_i)\to 0,~\mbox{when} ~i\to \infty.
\end{equation*}
But the family $(M^i,\omega^i_1)$ have bounded curvature, bounded volume, bounded diameter. By Hamilton's compactness theorem for Ricci flow(\cite{Ha}), $\rho_{\omega^i_1,l}(x)$ have a uniform lower bound. This is a contradiction. Hence we complete the proof.\qed
\begin{remark}
Under the above estimates, we can also prove that the Bergman kernel is uniformly continuous along the K\"{a}hler Ricci flow
by showing the measure is uniformly continuous.
\end{remark}

\section{Partial $C^0$ estimate for complex dimension $\ge 2$}\label{secPCEH}
Suppose $(M,\omega)$ is a complex dimension $m=\frac{n}{2}\ge 2$ Fano manifold, with $\Ric(\omega)\ge R_0, ~\mbox{\Vol}(M,\omega)\ge {V_0}$, $\mbox{\diam}(M,\omega)\le d$. Consider the heat flow(see \cite{T2})
\begin{equation}\label{mfs}
\frac{\partial f}{\partial s}=\log \left(\frac{(\omega+\partial \overline{{\partial}}f)^m}{\omega^m}\right)+f-h_{\omega},~f|_{s=0}=0.
\end{equation}
This is in fact the K\"{a}hler Ricci flow. Here $h_\omega$ is Ricci potential of $\omega$. We will
denote by $f_s$ and $\omega_s$ the function $f(s,\cdot)$ and the K\"{a}hler form $\omega+\partial \overline{{\partial}}f_s$.
\begin{remark}
In this section, the following constants $C$ depend only on $R_0,~V_0,~d,~n$. It maybe change line by line.
\end{remark}
\begin{remark}
In order to estimate the Bergman kernel for high dimension, we only need to estimate the K\"{a}hler potential like the case of complex dimension $1$.
However, the estimates of dimension $1$ mostly depend on the property of dimension $1$ which we can't extend to high dimension. Hence we must need another approach.
\end{remark}
First, we can estimate the upper bound of $h_{\omega}$. The proof is similar to Lemma \ref{Plem1}.
\begin{lemma}\label{mhwu}
Suppose $h_{\omega}$ is the Ricci potential of $\omega$. Normalize it by $\int_M{e^{h_{\omega}}}\omega^m=(2\pi)^{{m}}$. Then there is a uniform constant $C$ depending only on $R_0,~V_0,~d$ and $m$ such that
\begin{equation}
h_\omega\le C,
\end{equation}
\end{lemma}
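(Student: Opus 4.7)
The plan is to mimic Lemma \ref{Plem1} almost verbatim, with $h_{\omega}$ playing the role of $-u$. Under our sign convention $\Ric(\omega)-\omega=\sqrt{-1}\partial\overline{\partial}h_{\omega}$, tracing gives $\Delta h_{\omega}=R-m$. Setting $f=e^{h_{\omega}}$, a direct computation yields
\begin{equation*}
\Delta f\,=\,\Delta h_{\omega}\,e^{h_{\omega}}\,+\,|\nabla h_{\omega}|^{2}\,e^{h_{\omega}}\,\ge\,(R-m)\,f,
\end{equation*}
which is precisely the elliptic inequality $-\Delta f+Rf\le mf$ appearing in the proof of Lemma \ref{Plem1}. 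The normalization $\int_{M}f\,\omega^{m}=\int_{M}e^{h_{\omega}}\omega^{m}=(2\pi)^{m}$ is also identical, so it remains only to reproduce the Moser iteration in the static setting.

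First I would harvest the auxiliary ingredients we need. Since $\Ric(\omega)\ge R_{0}>0$, tracing gives a positive lower bound $R\ge mR_{0}>0$, so the term $Rf^{p+1}$ appearing after integration by parts can be discarded or used as a good term. Together with $\Vol(M,\omega)\ge V_{0}$ and Myers' diameter upper bound $d\le \pi\sqrt{(n-1)/R_{0}}$, a standard result (as cited in the paper after \eqref{eqn4}, e.g.\ Hebey or Li's theorem) provides an $L^{2}$-Sobolev inequality on $(M,\omega)$ with constants depending only on $R_{0},V_{0},d,m$. This is the fixed-metric analog of Proposition \ref{Ppro}, and it is the only analytic input the Moser iteration requires.

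Next I would execute the iteration. Multiplying $-\Delta f+Rf\le mf$ by $f^{p}$ ($p\ge 1$) and integrating by parts gives
\begin{equation*}
\frac{4p}{(p+1)^{2}}\int_{M}|\nabla f^{\frac{p+1}{2}}|^{2}\,\omega^{m}\,+\,\int_{M}R\,f^{p+1}\,\omega^{m}\,\le\,m\int_{M}f^{p+1}\,\omega^{m},
\end{equation*}
exactly as in Lemma \ref{Plem1}. Dropping the nonnegative $R\,f^{p+1}$ term and feeding the resulting gradient estimate into the Sobolev inequality produces the usual reverse-H\"older inequality
\begin{equation*}
\|f\|_{L^{(p+1)\chi}}\,\le\,(Cp)^{1/(p+1)}\,\|f\|_{L^{p+1}},\qquad \chi=\tfrac{n}{n-2},
\end{equation*}
(with the obvious modification for $n=2$ coming from Lemma \ref{Sobolev2}). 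Iterating starting at $p+1=1$ and summing the geometric series in $1/\chi^{k}$ yields
\begin{equation*}
\sup_{M}f\,\le\,C\int_{M}f\,\omega^{m}\,=\,C(2\pi)^{m},
\end{equation*}
so $h_{\omega}=\log f\le C$, with $C$ depending only on $R_{0},V_{0},d,m$.

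I do not anticipate a serious obstacle: the algebraic structure is identical to Lemma \ref{Plem1}, the positivity of $R$ makes the scalar-curvature term harmless, and the Sobolev inequality is supplied by classical Riemannian comparison geometry from the Ricci, volume, and diameter bounds. The only mild point to be careful about is making sure all iteration constants are tracked to depend only on $(R_{0},V_{0},d,n)$, which is automatic since every ingredient has that dependence.
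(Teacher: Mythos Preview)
Your proposal is correct and follows essentially the same argument as the paper: derive $\Delta e^{h_\omega}\ge (R-m)e^{h_\omega}\ge -C\,e^{h_\omega}$ from $\Delta h_\omega=R-m$, invoke the $L^2$-Sobolev inequality guaranteed by the Ricci, volume and diameter bounds, and run Moser's iteration against the normalization $\int_M e^{h_\omega}\omega^m=(2\pi)^m$. The only cosmetic difference is that you keep the term $R f^{p+1}$ and discard it via $R\ge mR_0>0$, whereas the paper absorbs $R-m\ge -C$ directly into the subsolution inequality before iterating; either bookkeeping leads to the same conclusion.
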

\begin{proof}
Since $\Ric(\omega)-\partial \overline{{\partial}}h_\omega=\omega$, taking the trace respect to $\omega$, we obtain
\begin{equation*}
\Delta h_\omega=R-m.
\end{equation*}
Thus
\begin{equation}\label{mehw}
\begin{split}
\Delta e^{h_\omega}&=\Delta h_\omega e^{h_\omega}+|\nabla h_\omega|^2e^{h_\omega}\\
&\ge (R-m)e^{h_\omega}\\
&\ge -C e^{h_\omega}.
\end{split}
\end{equation}
Moreover, since $\Ric(\omega)\ge R_0,~ \mbox{\Vol}(M,\omega)\ge {V_0}$, $\mbox{\diam}(M,\omega)\le d$, then the $L^2$-Sobolev inequality holds,
\begin{equation*}
\left(\int_M\phi^{\frac{2m}{m-1}}\omega^m\right)^{\frac{m-1}{m}}\le C\left(\int_M|\nabla \phi|^2\omega^m+\int_M\phi^2\omega^m\right).
\end{equation*}
The Moser's iteration and inequality (\ref{mehw}) imply
\begin{equation*}
e^{h_\omega}\le C\int_M e^{h_\omega}\omega^m=C (2\pi)^m.
\end{equation*}
Hence the upper bound of $h_\omega$ follows.
\end{proof}
Now we can estimate the K\"{a}hler potential $f_s$,
\begin{lemma}\label{mfsl}
Let $f_s$ be the K\"{a}hler potential of $\omega_s$, satisfying equation (\ref{mfs}). Then for all $s\ge 0$, we have a lower bound estimate for $f_s$,
\begin{equation}
f_s\ge C(1-e^s).
\end{equation}
\end{lemma}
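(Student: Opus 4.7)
The plan is to apply the minimum principle to the evolution equation (\ref{mfs}) directly, treating the $\log$ term as a favorable sign at an interior minimum and using Lemma \ref{mhwu} to tame the $-h_\omega$ term. This will reduce the problem to a linear ODE comparison for $g(s):=\min_{x\in M}f_s(x)$.

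First I would fix a time $s_0>0$ and pick $x_{s_0}\in M$ where $f_{s_0}$ attains its minimum on $M$. At this point, since $x_{s_0}$ is an interior minimum in space, the complex Hessian satisfies $\partial\bar\partial f_{s_0}(x_{s_0})\ge 0$ as a Hermitian form, so
\begin{equation*}
\left(\omega+\partial\bar\partial f_{s_0}\right)^m(x_{s_0})\,\ge\,\omega^m(x_{s_0}),
\end{equation*}
and hence the logarithm term in (\ref{mfs}) is nonnegative at $(x_{s_0},s_0)$. Combined with the upper bound $h_\omega\le C$ from Lemma \ref{mhwu}, the evolution equation yields
\begin{equation*}
\frac{\partial f}{\partial s}(x_{s_0},s_0)\,\ge\, f_{s_0}(x_{s_0})-h_\omega(x_{s_0})\,\ge\, g(s_0)-C.
\end{equation*}

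Next, I would invoke Hamilton's trick (a standard Lipschitz differentiation of the minimum over a compact manifold) to conclude that $g$ is locally Lipschitz in $s$ and satisfies, in the almost-everywhere (or barrier/viscosity) sense,
\begin{equation*}
g'(s)\,\ge\, g(s)-C,\qquad g(0)=0,
\end{equation*}
the initial condition coming from $f_0\equiv 0$. Multiplying by the integrating factor $e^{-s}$ gives $\left(e^{-s}g(s)\right)'\ge -Ce^{-s}$, and integrating over $[0,s]$ gives $e^{-s}g(s)\ge C(e^{-s}-1)$, i.e.
\begin{equation*}
g(s)\,\ge\, C(1-e^s),
\end{equation*}
which is the desired lower bound since $f_s(x)\ge g(s)$ pointwise.

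The only subtlety is the rigorous justification of the differential inequality at points where the spatial minimum is not attained at a unique smooth curve $x_s$; this is handled either by Hamilton's trick or, equivalently, by testing against smooth barriers at a.e.\ $s$, both of which are standard on a compact manifold. Apart from this routine point, no heavy machinery is needed: Lemma \ref{mhwu} provides the only nontrivial input, and the favorable sign of the $\log$-Monge--Amp\`ere term at a spatial minimum is what lets the argument close.
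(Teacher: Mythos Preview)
Your proof is correct and follows essentially the same route as the paper: use Lemma \ref{mhwu} to bound $-h_\omega\ge -C$, observe that at a spatial minimum the log--Monge--Amp\`ere term is nonnegative, and then apply the minimum principle to reduce to the ODE $g'(s)\ge g(s)-C$ with $g(0)=0$. The paper is somewhat terser about the minimum-principle step, but your invocation of Hamilton's trick is a standard way to make that step rigorous.
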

\begin{proof}
By Lemma \ref{mhwu},
\begin{equation*}
\begin{split}
\frac{\partial f_s}{\partial s}&=\log \left(\frac{\omega_s^m}{\omega^m}\right)+f_s-h_\omega\\
&\ge \log \left(\frac{\omega_s^m}{\omega^m}\right)+f_s-C.
\end{split}
\end{equation*}
Using maximum principle, we have
\begin{equation*}
\frac{\partial f_s}{\partial s}\ge f_s-C.
\end{equation*}
Then
\begin{equation*}
\partial_s \left(f_s e^{-s}\right)\ge -C e^{-s}.
\end{equation*}
Since $f_0=0$, we deduce
\begin{equation*}
f_s\ge C(1-e^s).
\end{equation*}
\end{proof}
\begin{lemma}\label{mfsu}
Let $f_s$ be the K\"{a}hler potential of $\omega_s$, satisfying equation (\ref{mfs}). Then for all $s\ge 0$, we have an upper bound estimate for $f_s$,
\begin{equation}
f_s\le Ce^s.
\end{equation}
\end{lemma}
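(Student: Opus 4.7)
\emph{Plan.} The strategy is to first establish an $L^1$-bound $\int_M f_s\,\omega^m \le C(e^s-1)$ by an integral version of the maximum principle, and then to upgrade it to the pointwise bound $\sup_M f_s \le Ce^s$ by exploiting the $\omega$-plurisubharmonicity of $f_s$. This is dual in spirit to the proof of Lemma~\ref{mfsl}, which runs the pointwise maximum principle at a minimum of $f_s$; I cannot simply symmetrise that argument, because whereas Lemma~\ref{mhwu} provides a uniform upper bound on $h_\omega$, a matching pointwise lower bound $h_\omega\ge -C$ is not directly available from a Ricci lower bound alone.

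For Step~1, I would set $\phi(s):=\int_M f_s\,\omega^m$ and differentiate \eqref{mfs} against $\omega^m$ to obtain
\begin{equation*}
\phi'(s)\,=\,\int_M \log\!\left(\frac{\omega_s^m}{\omega^m}\right)\omega^m \,+\, \phi(s) \,-\, \int_M h_\omega\,\omega^m.
\end{equation*}
Since $[\omega_s]=[\omega]$, one has $\int_M \omega_s^m = V$, so the identity $\int_M(\omega_s^m/\omega^m)\,\omega^m/V = 1$ and Jensen applied to the concave $\log$ make the first term $\le 0$. The last term is handled by showing that $\bar h_\omega := V^{-1}\int_M h_\omega\,\omega^m$ is uniformly bounded: Chern--Weil gives $\bar R=m$, so the Green's-function identity $h_\omega(x)=\bar h_\omega - \int_M G(x,y)R(y)\,\omega^m(y)$, combined with $G\ge -B$ (available from Ric, volume and diameter bounds as in \cite{BM,CL}) and $R\ge 0$, pins $\bar h_\omega$ from both sides by testing at a point where $h_\omega=\log((2\pi)^m/V)$ (such a point exists from $\int e^{h_\omega}\omega^m=(2\pi)^m$). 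Hence $\phi'(s)\le \phi(s)+C$ and $\phi(0)=0$, so Gronwall gives $\phi(s)\le C(e^s-1)$.

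For Step~2, I would use that $\omega+\partial\bar\partial f_s\ge 0$ implies $\Delta f_s\ge -m$; so $u_s := \sup_M f_s - f_s\ge 0$ satisfies $\Delta u_s\le m$ and is a nonnegative supersolution of a uniform elliptic equation with bounded right-hand side. The Moser weak Harnack inequality (or equivalently a standard $L^1\to L^\infty$ estimate for $\omega$-plurisubharmonic functions) applied with the uniform $L^2$-Sobolev inequality on $(M,\omega)$ then yields
\begin{equation*}
\sup_M f_s \,\le\, \frac{1}{V}\int_M f_s\,\omega^m \,+\, C_0 \,=\, \frac{\phi(s)}{V}\,+\,C_0,
\end{equation*}
with $C_0=C_0(R_0,V_0,d,m)$. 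Combining with Step~1 gives $\sup_M f_s\le Ce^s$. The main obstacle is Step~2: the Moser/Sobolev constants must depend only on the standing geometric data. This reduces to Hebey--Li's uniform Sobolev inequality under the hypotheses $\Ric(\omega)\ge R_0$, $\Vol(M,\omega)\ge V_0$, $\diam(M,\omega)\le d$, but the verification that the resulting $L^1\to L^\infty$ bound for $\omega$-psh functions is uniform in this class is the delicate point and where essentially all the work goes.
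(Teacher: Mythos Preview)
Your Step~1 is essentially identical to the paper's: differentiate the average $\phi(s)=V^{-1}\int_M f_s\,\omega^m$, kill the $\log(\omega_s^m/\omega^m)$ term by Jensen, bound $\bar h_\omega$ from below via the Green's formula at a point where $h_\omega=\log((2\pi)^m/V)$, and apply Gronwall. (A minor remark: you invoke $R\ge 0$, whereas the paper only uses the scalar lower bound $R\ge -C_0$; your argument goes through with $R\ge mR_0$ after the obvious adjustment.)

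The difference is in Step~2. You propose to pass from the $L^1$-bound to the pointwise bound via a Moser weak Harnack inequality applied to $u_s=\sup_M f_s - f_s\ge 0$, and you flag this as ``the delicate point and where essentially all the work goes.'' In fact the paper shows it is not delicate at all: since $\omega_s>0$ gives $-\Delta f_s < m$, the Green representation
\[
f_s(x)\;=\;\frac{1}{V}\int_M f_s\,\omega^m \;+\;\int_M\bigl(\Gamma(x,y)+B\bigr)\bigl(-\Delta f_s(y)\bigr)\,\omega^m(y)
\;\le\;\frac{1}{V}\int_M f_s\,\omega^m \;+\; mBV
\]
immediately yields $\sup_M f_s\le \phi(s)+C_0$ with $C_0=mBV$. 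This uses exactly the same Green lower bound $\Gamma\ge -B$ that you already invoked in Step~1, so no additional machinery is needed. Your Moser approach is correct, but it is heavier than necessary and somewhat inconsistent: having used the Green function in Step~1, it is more natural (and shorter) to use it again in Step~2.
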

\begin{proof}
Since $\omega_s =\omega+\partial \overline{\partial}f_s>0$, taking trace respect to $\omega$, we have
\begin{equation*}
m+\Delta f_s>0
\end{equation*}
By the assumption of initial metric $\omega$, we can control the Green function lower bound $\Gamma (x,y)\ge -C$. Applying Green's formula, we have
\begin{equation}\label{efsu}
\begin{split}
f_s(x)&=\frac{1}{V}\int_Mf_s\omega^m+\int_M\Gamma (x,y)(-\Delta f_s)\omega^m\\
&= \frac{1}{V}\int_Mf_s\omega^m+\int_M(\Gamma (x,y)+C)(-\Delta f_s)\omega^m\\
&\le \frac{1}{V}\int_Mf_s\omega^m+m\int_M(\Gamma (x,y)+C)\omega^m\\
&\le mCV+\frac{1}{V}\int_Mf_s\omega^m.
\end{split}
\end{equation}
In order to get an upper bound of $f_s$, it suffices to estimate $\frac{1}{V}\int_Mf_s\omega^m$. Actually, by (\ref{mfs}) and Jensen's inequality,
\begin{equation}
\begin{split}
\frac{\partial}{\partial s}\left(\frac{1}{V}\int_Mf_s\omega^m\right)&=\frac{1}{V}\int_M\frac{\partial f_s}{\partial s}\omega^m\\
&=\frac{1}{V}\int_M\log \left(\frac{\omega_s^m}{\omega^m}\right)\omega^m+\frac{1}{V}\int_M f_s\omega^m-\frac{1}{V}\int_M h_\omega \omega^m\\
&\le \log \left(\int_M\frac{\omega_s^m}{\omega^m} \frac{\omega^m}{V}\right)+\frac{1}{V}\int_M f_s\omega^m-\frac{1}{V}\int_M h_\omega \omega^m\\
&\le \frac{1}{V}\int_M f_s\omega^m-\frac{1}{V}\int_M h_\omega \omega^m.
\end{split}
\end{equation}
On the other hand, we have a lower bound estimate of $\frac{1}{V}\int_M h_\omega \omega^m\ge -C$, see (\ref{eqn10}), noting that $-u=h_\omega$. Hence
\begin{equation*}
\frac{\partial}{\partial s}\left(\frac{1}{V}\int_Mf_s\omega^m\right)\le C+\frac{1}{V}\int_Mf_s\omega^m.
\end{equation*}
Since $f_0=0$, then we get the upper bound of $\frac{1}{V}\int_Mf_s\omega^m$,
\begin{equation*}
\frac{1}{V}\int_Mf_s\omega^m\le C(e^s-1).
\end{equation*}
Substituting into (\ref{efsu}), the lemma follows.
\end{proof}
\begin{remark}
Since we can estimate the $L^1$-norm of K\"{a}hler potential $f_s$, one can use Moser's iteration to get the upper bound of $f_s$. Furthermore the upper bound will go to zero when time go to zero.
\end{remark}
\begin{remark}If we have a $C^0$ bound of the Ricci potential $h_{\omega}$, by using Maximum principal theorem, one can easily deduce all the above bound(see \cite{T2}). However, we do not have the $C^0$ bound for the Ricci potential $h_{\omega}$ here.
\end{remark}

We can choose $\omega_s^me^{h_{\omega_s}}$ as a Hermitian metric of anti-canonical line bundle $K_M^{-1}$ with curvature form $\omega_s$, since
\begin{equation*}
\Ric(\omega_s)-\partial \overline{{\partial}}h_{\omega_s}=\omega_s.
\end{equation*}
Denote $\omega_s^me^{h_{\omega_s}}$ by $H_{\omega_s}$.
Noticing that the constant $c_s=h_{\omega_s}+\frac{\partial f_s}{\partial s}$ have a uniform bound in Lemma \ref{csb} and combining Lemma \ref{mfsl} with Lemma \ref{mfsu}, we have
\begin{lemma}\label{mhe}
For all $s\le 1$, $H_{\omega}$ and $H_{\omega_s}$ are equivalent. i.e.,
\begin{equation*}
\frac{1}{C}H_{\omega}\le H_{\omega_s}\le C H_{\omega}.
\end{equation*}
 where the constant $C$ depends only on $R_0,~V_0,~m$ and $d$.
\end{lemma}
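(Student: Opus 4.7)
The plan is to compute the pointwise ratio $H_{\omega_s}/H_{\omega}$ explicitly from the flow equation (\ref{mfs}) and then invoke the three preceding lemmas to bound the exponent uniformly.

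First I would unpack the definitions. Since $H_{\omega_s} = \omega_s^m e^{h_{\omega_s}}$ and $H_{\omega} = \omega^m e^{h_{\omega}}$, the ratio is
\begin{equation*}
\frac{H_{\omega_s}}{H_{\omega}} \,=\, \frac{\omega_s^m}{\omega^m}\, e^{h_{\omega_s}-h_{\omega}}.
\end{equation*}
The flow equation (\ref{mfs}) rearranges to
\begin{equation*}
\log\!\left(\frac{\omega_s^m}{\omega^m}\right) \,=\, \frac{\partial f_s}{\partial s} \,-\, f_s \,+\, h_{\omega},
\end{equation*}
so substituting gives $H_{\omega_s}/H_{\omega} = \exp\!\bigl(\frac{\partial f_s}{\partial s} - f_s + h_{\omega_s}\bigr)$. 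Recalling the definition $c_s = h_{\omega_s} + \frac{\partial f_s}{\partial s}$, this simplifies to the clean identity
\begin{equation*}
\frac{H_{\omega_s}}{H_{\omega}} \,=\, e^{\,c_s - f_s\,}.
\end{equation*}

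Next I would bound the exponent uniformly for $s\le 1$. The constant $c_s$ satisfies $|c_s|\le C(e^s-1)\le C$ by Lemma \ref{csb} (which was derived in the complex dimension one section but depends only on boundedness of $a$, and the argument, being essentially an ODE for $c_s$, carries over verbatim once the relevant $L^1$ bound on $h_{\omega_s}$ is available in the present setting). The potential $f_s$ is controlled on both sides by Lemmas \ref{mfsl} and \ref{mfsu}: $C(1-e^s) \le f_s \le C e^s$, which for $s\le 1$ yields $|f_s|\le C$. Combining these, $|c_s - f_s|\le C$ uniformly on $M\times[0,1]$, so
\begin{equation*}
\frac{1}{C}\,H_{\omega} \,\le\, H_{\omega_s} \,\le\, C\,H_{\omega},
\end{equation*}
with $C = C(R_0,V_0,m,d)$, as desired.

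The only substantive point is to check that Lemma \ref{csb} is legitimately at our disposal in the higher-dimensional setting. Its proof reduces to the ODE $\partial_s c_s = c_s - a$ with $c_0 = 0$, where $a = (2\pi)^{-m}\int_M h_{\omega_s} e^{h_{\omega_s}} \omega_s^m$. One needs a uniform bound on $a$, which in turn requires only that $h_{\omega_s} e^{h_{\omega_s}}$ is integrable with a uniform bound; since Lemma \ref{mhwu} already delivers an upper bound for $h_\omega$ and the argument is stable under the flow, no obstacle arises here. The rest of the proof is a direct substitution, so the calculation above is essentially the entire argument.
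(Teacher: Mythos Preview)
Your proof is correct and follows essentially the same route as the paper: both derive the identity $H_{\omega_s}/H_\omega = e^{c_s - f_s}$ from the flow equation and then invoke Lemmas \ref{csb}, \ref{mfsl}, \ref{mfsu} to bound the exponent for $s\le 1$. Your added remark that Lemma \ref{csb} must be checked in the higher-dimensional setting is a fair point the paper glosses over, though the precise justification for the bound on $a$ comes from the Perelman entropy argument of Section \ref{secPere} rather than from Lemma \ref{mhwu}.
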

\begin{proof}
By equation (\ref{mfs}), we have
\begin{equation*}
e^{\frac{\partial f_s}{\partial s}-f_s+h_\omega}=\frac{\omega_s^m}{\omega^m}.
\end{equation*}
Thus
\begin{equation*}
e^{c_s-f_s}=\frac{\omega_s^m e^{-\frac{\partial f_s}{\partial s}+c_s}}{\omega^m e^{h_\omega}}=\frac{H_{\omega_s}}{H_\omega}.
\end{equation*}
By Lemma \ref{csb}, Lemma \ref{mfsl}, and Lemma \ref{mfsu}, we deduce
\begin{equation*}
\frac{1}{C}\le\frac{H_{\omega_s}}{H_\omega}\le C
\end{equation*}
for all $s\le 1.$
\end{proof}
The diameter upper bound is also under control.
\begin{lemma}\label{diame}
For $\frac{1}{2}\le s\le 1$, there exits a uniform constant $D>0$ depending only on $R_0,~V_0,~d$ and $m$ such that
\begin{equation}
\mbox{\diam}(M,\omega_s)\le D.
\end{equation}
\end{lemma}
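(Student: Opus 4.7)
The plan is to reduce Lemma \ref{diame} to Perelman's estimate (Theorem \ref{Perelman}) by restarting the K\"ahler Ricci flow at $s=1/2$ and treating $\omega_{1/2}$ as the new initial metric. Theorem \ref{Perelman} then delivers a diameter bound valid for all $s \ge 1/2$, and in particular on $[1/2,1]$, provided the four quantities on which its constant depends---the volume, the $L^2$-Sobolev constant, and the sup-bounds of $|R|$ and $|\nabla u|$ at the new initial time---are all controlled by $R_0, V_0, d, m$ only.

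First, the volume along the flow is preserved: from $\partial_t d\mu_t = (m-R)\,d\mu_t = \Delta u \cdot d\mu_t$ one gets $\partial_t \Vol(M,\omega_s) = \int_M \Delta u\,d\mu_s = 0$, so $\Vol(M,\omega_{1/2}) = \Vol(M,\omega) \ge V_0$. Second, Theorem \ref{main} applies with constants depending only on $R_0, V_0, d, m$, and evaluating at $t = 1/2$ yields
\[
\sup_M |R(\cdot,1/2)| \le C, \qquad \sup_M |\nabla u(\cdot,1/2)|^2 \le C,
\]
with $C = C(R_0, V_0, d, m)$. Third, the $L^2$-Sobolev constant of $(M,\omega)$ is itself controlled by $R_0, V_0, d$ via standard comparison-geometric results (as invoked at the end of Section \ref{secThm2}), and Lemma \ref{Sobolev1} then propagates a Sobolev-type inequality to time $1/2$ with an extra scalar-curvature term. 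Combining this with the bound on $R(\cdot,1/2)$ just obtained converts it into an honest $L^2$-Sobolev inequality for $\omega_{1/2}$ whose constant still depends only on $R_0, V_0, d, m$.

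With all four inputs in hand, Theorem \ref{Perelman} applied to the K\"ahler Ricci flow restarted at $s = 1/2$ yields the desired uniform diameter bound $\diam(M,\omega_s) \le D$ for every $s \in [1/2,1]$ with $D = D(R_0, V_0, d, m)$. The main obstacle I expect is the bookkeeping in the third step: the Sobolev inequality furnished by Lemma \ref{Sobolev1} is the natural Perelman-type one carrying a scalar-curvature correction, so one must carefully track how its constants convert into those of an unweighted Sobolev inequality at time $1/2$ using the scalar-curvature estimate from Theorem \ref{main}, verifying that no latent dependence on $\omega$ beyond $(R_0, V_0, d, m)$ slips in. Once this is settled, the conclusion follows by a direct invocation of Theorem \ref{Perelman}.
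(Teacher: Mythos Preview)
Your approach is correct, but the paper takes a shorter and more elementary route. Both arguments share the same key step: combine the Sobolev inequality of Lemma~\ref{Sobolev1} (which carries the scalar-curvature correction $R_s+C_0$) with the bound $|R_s|\le C$ from Theorem~\ref{main} on $[\tfrac{1}{2},1]$ to obtain an \emph{unweighted} $L^2$-Sobolev inequality for $\omega_s$ with constants depending only on $R_0,V_0,d,m$. After this point the two diverge. The paper simply observes that such a Sobolev inequality forces volume non-collapsing, $\Vol(B_r(x))\ge c\,r^{2m}$ for $r\le 1$ (citing Lemma~2.2 of \cite{H}); since the total volume is preserved along the flow, an elementary packing argument immediately bounds the diameter. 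You instead feed the Sobolev constant at $s=\tfrac{1}{2}$, together with the bounds on $|R|$ and $|\nabla u|$ at $s=\tfrac{1}{2}$ from Theorem~\ref{main}, into the full Perelman machinery of Theorem~\ref{Perelman}. This works, but it invokes the entire apparatus of Section~\ref{secPere}---the $\mu$-functional monotonicity, the Yau-type gradient estimate of Lemma~\ref{Plem2}, and the annulus argument of Lemmas~\ref{Plem4}--\ref{Plem5}---where the paper needs only the elementary implication ``Sobolev $\Rightarrow$ non-collapsing $\Rightarrow$ diameter bound.'' Your route has the virtue of reusing an already-proved black box; the paper's has the virtue of being self-contained in three lines once the unweighted Sobolev inequality is in hand.
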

\begin{proof}
Since along the K\"{a}hler Ricci flow, the following $L^2$-Sobolev inequality holds(see Ye\cite{Y2} or Zhang\cite{Z}),
\begin{equation*}
\left(\int_{M}\phi(x)^{\frac{2m}{m-1}}\omega^m_s\right)^{\frac{m-1}{m}}\le A\left(\int_M \left[4|\nabla \phi(x)|^2+(R_s+C_0)\phi^2(x)\right]\omega^m_s\right)
\end{equation*}
for all $0\le s\le 1$ and $\phi \in W^{1,2}(M,\omega_s)$.
For $\frac{1}{2}\le s\le 1$, by Theorem \ref{main}, we have $|R_s|\le C$. Hence
\begin{equation*}
\left(\int_{M}\phi(x)^{\frac{2m}{m-1}}\omega^m_s\right)^{\frac{m-1}{m}}\le C\left(\int_M \left[4|\nabla \phi(x)|^2+\phi^2(x)\right]\omega^m_s\right).
\end{equation*}
Since the $L^2$-Sobolev inequality implies the non-collapsing of volume (see Lemma 2.2 of \cite{H}) and
the volume is preserved along the K\"{a}hler Ricci flow, thus there must be a uniform upper bound for diameter.
\end{proof}
\begin{theorem}\label{Beq}
For all $\frac{1}{2}\le s\le 1$, all $l\ge 1$, the Bergman kernels $\rho_{\omega,l}$ and $\rho_{\omega_s,l}$ are equivalent. i.e., there exists a constant $C_l$ depending only on $R_0,~V_0,~d,~l$ and $m$, such that
\begin{equation}
\frac{1}{C_l}\rho_{\omega,l}\le \rho_{\omega_s,l}\le C_l\rho_{\omega,l}.
\end{equation}
\end{theorem}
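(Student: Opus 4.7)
The plan is to mirror the Hermitian-metric-plus-Moser argument that underlies the contradiction step at the end of the proof of Theorem~\ref{thm2}, but to run it as a direct two-sided estimate. By Remark~\ref{rhoremark}, the Bergman kernel $\rho$ is equivalent to the extremal function $\eta$ up to a factor of $N=\dim H^0(M,K_M^{-l})$, so it suffices to prove a pointwise two-sided bound $\eta_{\omega_s}(x)\le C_l\,\rho_{\omega,l}(x)$ and its converse, with $C_l$ depending only on $R_0,V_0,d,m,l$. The three ingredients that make this work are: the equivalence of Hermitian metrics $H_\omega\sim H_{\omega_s}$ for $s\le 1$ (Lemma~\ref{mhe}); the uniformly bounded geometry of $\omega_s$ for $s\in[\tfrac12,1]$ (volume preserved along the flow, scalar curvature bound from Theorem~\ref{main} at $s=\tfrac12$ fed into Theorem~\ref{Perelman}, and diameter bound from Lemma~\ref{diame}); and the bounded geometry of the initial metric $\omega$ (automatic from $\Ric(\omega)\ge R_0>0$ via Myers and Bishop, plus $\Vol(M,\omega)\ge V_0$ and $\diam(M,\omega)\le d$).

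Fix $x\in M$ and $s\in[\tfrac12,1]$. For the direction $\rho_{\omega_s,l}(x)\le C_l\rho_{\omega,l}(x)$, I would choose $S\in H^0(M,K_M^{-l})$ realising $\eta_{\omega_s}(x)$, so that $\|S\|_{H_{\omega_s}^{\otimes l},\omega_s}^2=1$ and $\|S\|_{H_{\omega_s}^{\otimes l}}^2(x)=\eta_{\omega_s}(x)$. The pointwise identity $\|S\|_{H_{\omega_s}^{\otimes l}}^2=(H_{\omega_s}/H_\omega)^l\|S\|_{H_\omega^{\otimes l}}^2$ together with Lemma~\ref{mhe} yields $\eta_{\omega_s}(x)\le C^l\|S\|_{H_\omega^{\otimes l}}^2(x)\le C^l\eta_\omega(x)\,\|S\|_{H_\omega^{\otimes l},\omega}^2\le C^l\rho_{\omega,l}(x)\,\|S\|_{H_\omega^{\otimes l},\omega}^2$, so the only remaining task is to bound the $L^2$-norm $\|S\|_{H_\omega^{\otimes l},\omega}^2$ by a constant of the allowed type.

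That bound comes from Moser iteration. Because $S$ is holomorphic and the curvature of $H_{\omega_s}^{\otimes l}$ equals $l\omega_s$, the nonnegative function $u=\|S\|_{H_{\omega_s}^{\otimes l}}^2$ satisfies $-\Delta_{\omega_s}u\le lm\,u$, and the $L^2$-Sobolev inequality along the K\"ahler Ricci flow (Lemma~\ref{Sobolev1}) combined with the uniform geometric bounds on $\omega_s$ noted above drives a standard Moser iteration to give $\sup_M u\le C_l\int_M u\,\omega_s^m=C_l$. Lemma~\ref{mhe} then transfers this to $\sup_M\|S\|_{H_\omega^{\otimes l}}^2\le C_l$, and integrating against $\omega^m$ (whose total volume is bounded above by Bishop--Gromov from $\Ric(\omega)\ge R_0>0$) produces $\|S\|_{H_\omega^{\otimes l},\omega}^2\le C_l$. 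Substituting back finishes this direction; the reverse inequality is obtained by the same argument with the roles of $\omega$ and $\omega_s$ swapped, using instead the initial Sobolev inequality on $(M,\omega)$ (available from $\Ric\ge R_0$, $\Vol\ge V_0$, $\diam\le d$).

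The main conceptual obstacle to monitor is that Lemma~\ref{mhe} compares the Hermitian \emph{metrics} $H_\omega,H_{\omega_s}$ on $K_M^{-1}$, which package together the volume forms and the Ricci-potential weights; it does \emph{not} imply that the volume forms $\omega^m$ and $\omega_s^m$ are individually comparable, since we lack two-sided pointwise bounds on $h_{\omega_s}-h_\omega$. The role of Moser's iteration in the plan is precisely to sidestep this difficulty: it converts a pointwise sup-bound on $\|S\|^2$ into an $L^2$-bound against whichever measure is convenient, after which the finite total volumes of $(M,\omega)$ and $(M,\omega_s)$ (both pinched between $V_0$ and a Bishop upper bound) close the loop.
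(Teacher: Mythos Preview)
Your proposal is correct and follows essentially the same route as the paper's proof: pick an extremizer $S$ for $\eta_{\omega_s}(x)$, use Lemma~\ref{mhe} to compare pointwise Hermitian norms, reduce to bounding $\|S\|^2_{H_\omega^{\otimes l},\omega}$, and obtain that bound via Moser iteration on $\|S\|^2_{H_{\omega_s}^{\otimes l}}$ (using the flow Sobolev inequality plus the scalar curvature bound from Theorem~\ref{main}) followed by a transfer through Lemma~\ref{mhe} and integration against the bounded volume of $\omega$. Your closing remark that Lemma~\ref{mhe} only compares the full Hermitian metrics and not the volume forms, so that Moser iteration is what circumvents the lack of a pointwise comparison between $\omega^m$ and $\omega_s^m$, is exactly the point.
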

\begin{proof}
Assume $S_s\in H^0(M,K^{-l}_{M})$, $x\in M$, satisfying $||S_s||^2_{H^{\otimes l}_{\omega_s},\omega_s}=1$,
\begin{equation*}
||S_s||_{H^{\otimes l}_{\omega_s}}^2(x)=
\sup_{||S||^2_{H^{\otimes l}_{\omega_s},\omega_s}=1}||S||^2_{H^{\otimes l}_{\omega_s}}(x)=\eta_{\omega_s}(x).
\end{equation*}
Then, by Lemma \ref{mhe} and Remark \ref{rhoremark}
\begin{equation*}
\begin{split}
\eta_{\omega_s}(x)&=\frac{||S_s||_{H^{\otimes l}_{\omega_s}}^2(x)}{||S_s||_{H^{\otimes l}_{\omega}}^2(x)}\cdot||S_s||^2_{H^{\otimes l}_{\omega},\omega}
\cdot\frac{||S_s||_{H^{\otimes l}_{\omega}}^2(x)}{||S_s||^2_{H^{\otimes l}_{\omega},\omega}}\\
&\le \frac{||S_s||_{H^{\otimes l}_{\omega_s}}^2(x)}{||S_s||_{H^{\otimes l}_{\omega}}^2(x)}\cdot||S_s||^2_{H^{\otimes l}_{\omega},\omega}\cdot \eta_{\omega}(x)\\
&\le C||S_s||^2_{H^{\otimes l}_{\omega},\omega}\cdot \rho_{\omega,l}(x).
\end{split}
\end{equation*}
Using Remark \ref{rhoremark} again, we obtain
\begin{equation*}
\rho_{\omega_s,l}(x)\le NC||S_s||^2_{H^{\otimes l}_{\omega},\omega}\cdot\rho_{\omega,l}(x).
\end{equation*}
Due to Zhang \cite{Z} or Ye\cite{Y2}, and the upper bound estimate of scalar curvature in Theorem \ref{main}, we have the $L^2$-Sobolev inequality holds, for all $\frac{1}{2}\le s \le1,$
\begin{equation*}
\left(\int_{M}\phi(x)^{\frac{2m}{m-1}}\omega^m_s\right)^{\frac{m-1}{m}}\le C\left(\int_M \left[4|\nabla \phi(x)|^2+\phi^2(x)\right]\omega^m_s\right), ~\forall \phi \in W^{1,2}(M,\omega_s).
\end{equation*}
Moreover, for section $S_s$, we have equation (see Tian \cite{T1})
\begin{equation*}
\Delta_s ||S_i||^2_{H^{\otimes l}_{\omega_s}}=||\nabla S_s||^2_{H^{\otimes l}_{\omega_s}}-ml||S_i||^2_{H^{\otimes l}_{\omega_s}}\ge -ml||S_i||^2_{H^{\otimes l}_{\omega_s}}.
\end{equation*}
Applying Moser's iteration, for all $y\in M$, we deduce
\begin{equation*}
||S_s||^2_{H^{\otimes l}_{\omega_s}}(y)\le C ||S_s||^2_{H^{\otimes l}_{\omega_s},\omega_s}\,=\,C.
\end{equation*}
By the equivalence of $H^{\otimes l}_{\omega_s}$ and $H^{\otimes l}_{\omega}$, we have
\begin{equation*}
||S_s||^2_{H^{\otimes l}_{\omega}}(y)\le C.
\end{equation*}
This implies
\begin{equation*}
||S_s||^2_{H^{\otimes l}_{\omega},\omega}\le CV.
\end{equation*}
Hence
\begin{equation*}
\rho_{\omega_s,l}(x)\le NC\rho_{\omega,l}(x).
\end{equation*}
The proof of the other part is similar.
\end{proof}
In order to prove Theorem \ref{thm23}, we only need to prove the following:
\begin{theorem}\label{bergman}
For any family of Fano manifolds $\{(M,\omega^i)\}$ with complex dimension $m=2,~3$ and $\Ric(\omega^i)\ge R_0,~\mbox{\Vol}(M,\omega^i)\ge V_0, ~\diam(M,\omega^i)\le d$, there exists a subsequence $\{(M,\omega^{i_k})\}$ and sequence $l_k\to \infty$, such that for all $l=l_k$
\begin{equation*}
\inf_{i_k}\inf_{x\in M}\rho_{\omega^{i_k},l}(x)>0.
\end{equation*}
\end{theorem}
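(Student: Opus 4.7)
The plan is to argue by contradiction. If Theorem \ref{bergman} fails, then for any prospective sequence $l_k \to \infty$ one can find a subsequence $(M,\omega^i)$ and points $x_i \in M$ with $\rho_{\omega^i,l}(x_i) \to 0$ along some $l=l_k$. Theorem \ref{Beq} gives a constant $C_l$ depending only on $R_0,V_0,d,m,l$ such that $\rho_{\omega^i,l}(x) \le C_l\,\rho_{\omega_1^i,l}(x)$, where $\omega_1^i$ denotes the evolved metric at time $s=1$ along the K\"ahler Ricci flow starting from $\omega^i$. Thus it suffices to produce a subsequence $i_k \to \infty$ and a sequence $l_k \to \infty$ for which $\rho_{\omega_1^{i_k},l_k}(x_{i_k}) \ge c_{l_k}>0$.

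\textbf{Compactness at $s=1$.} I would next observe that the evolved metrics $\omega_1^i$ enjoy uniform bounds from Theorem \ref{main} (scalar curvature and $|\nabla h_{\omega_1^i}|^2$), Theorem \ref{Perelman} ($C^1$-bound of the Ricci potential), and Lemma \ref{diame} (diameter), with constants depending only on $R_0,V_0,d,m$. The volume is preserved along the flow, and the $L^2$-Sobolev inequality (Lemma \ref{Sobolev1}) remains non-degenerate. For complex dimensions $m=2,3$ this is precisely the regime in which Tian--Zhang's recent theory \cite{TZha1} applies: after passing to a subsequence, $(M,\omega_1^{i_k})$ converges in the Gromov--Hausdorff topology to a compact normal K\"ahler variety $(X_\infty,\omega_\infty)$ with singular set of real Hausdorff codimension at least $4$, the convergence is smooth on the regular part, and $K_M^{-1}$ converges to a continuous Hermitian line bundle $(K_{X_\infty}^{-1},H_\infty)$ whose curvature is $\omega_\infty$.

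\textbf{Peak sections and the $\bar\partial$-argument.} Let $x_\infty \in X_\infty$ be a Gromov--Hausdorff limit of (a subsequence of) the base points $x_{i_k}$. For each large $l$ I would construct a peak section of $K_{X_\infty}^{-l}$ concentrated near $x_\infty$ at scale $O(l^{-1/2})$: on the regular part a Gaussian model works by the standard H\"ormander estimate; near the singular set, Tian--Zhang's $\epsilon$-regularity together with the codimension-$4$ bound permits a cutoff losing only a small $L^2$ quantity. Transplanting such a section via the smooth convergence to $(M,\omega_1^{i_k})$ and correcting with H\"ormander's $\bar\partial$-estimate on $M$ (whose Bochner constants are under control because $l\,\omega_1^{i_k}-\Ric(\omega_1^{i_k})$ is uniformly positive for large $l$ by the scalar curvature bound at $s=1$), I would obtain a holomorphic section $S_{i_k} \in H^0(M,K_M^{-l})$ with $\|S_{i_k}\|_{H_{\omega_1^{i_k}}^{\otimes l},\omega_1^{i_k}}$ bounded and $\|S_{i_k}\|_{H_{\omega_1^{i_k}}^{\otimes l}}^2(x_{i_k})\ge c>0$. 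Together with Remark \ref{rhoremark}, this contradicts $\rho_{\omega_1^{i_k},l}(x_{i_k}) \to 0$.

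\textbf{Main obstacle.} The most delicate step is the peak section construction when $x_\infty$ lies on or near the singular locus of $X_\infty$: one has to combine the Hausdorff codimension estimate with a quantitative $\epsilon$-regularity to exhibit a sufficiently large regular neighborhood at scale $l^{-1/2}$ in which the Gaussian model produces a uniformly positive value after the H\"ormander correction. This is the exact juncture where the restriction to complex dimension $\le 3$ enters, via the Tian--Zhang regularity theory, and where the $\bar\partial$-estimate along a Ricci-flow-evolved sequence has to be made uniform in $i$ and $l$ simultaneously.
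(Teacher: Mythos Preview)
Your outline is correct and follows essentially the same route as the paper: reduce to the time-$1$ slice via Theorem \ref{Beq}, apply the Tian--Zhang regularity package (the $L^4$ Ricci bound along the flow, Gromov--Hausdorff compactness with codimension-$4$ singular set, and smooth convergence on the regular part via pseudolocality and Shi's estimates), and then run the peak-section/H\"ormander $\bar\partial$ argument of \cite{T3,T5,TZha1}. One technical correction: the uniform H\"ormander constant at $s=1$ does \emph{not} follow from the scalar curvature bound alone, since a scalar bound gives no pointwise control on $\Ric$; the paper instead uses the $C^1$ bound on the Ricci potential $u$ from Theorem \ref{main} to absorb the bad $\nabla\bar\nabla u$ term in the Bochner--Kodaira identity by integration by parts (this is the remark at the very end of the paper's proof).
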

\begin{proof}
Let $(M,\omega_s^i)$ be the manifolds at time $t=s$ along the K\"{a}hler Ricci flow with initial metric $\omega^i$. By the above theorem \ref{Beq} (the equivalence of Bergman kernels), if we can show a uniform lower bound for the Bergman kernel $\rho_{\omega_1^i,l}$ of $(M,\omega_1^i)$ with a sequence of $l\to \infty$, then the Theorem follows.  We need the following lemmas developed in Tian and Zhang's paper \cite{TZha1}.

First of all, the $L^4$-estimate of Ricci curvature is the key step to prove the theorem.
\begin{lemma}(\cite{TZha1})\label{L4estm} Let $(M,\omega)$ be a Fano manifold with complex dimension $m$, $\Ric(\omega)\ge R_0,~\mbox{\Vol}(M,\omega)\ge V_0>0, ~\diam(M,\omega)\le d$. Then along the K\"{a}hler Ricci flow
 \begin{equation}
\partial_t g_{i\bar{j}}\,=\,g_{i\bar{j}}\,-\,R_{i\bar{j}}=u_{i\bar{j}},~~\omega(0)=\omega,
\end{equation}
there exists $C=C(R_0,V_0,d,m)$ such that
\begin{equation}
\int_{M}|\Ric (\omega_t)|^4\omega^m_t\le C, ~~\forall t\in [\frac{1}{2},1].
\end{equation}
\end{lemma}
\begin{proof}
The proof is similar to \cite{TZha1} by noticing our estimates in Theorem \ref{main},
\begin{equation}\label{potest}
||u(t)||_{C^0}+||\nabla u(t)||_{C^0}+||\Delta u(t)||_{C^0}\le C,~~\forall t\in [\frac{1}{2},1]
\end{equation}
where $C=C(R_0,V_0,d,m)$. Combining (\ref{potest}) with the Chern-Weil theory, we have the following $L^2$-estimate
\begin{equation}
\int_M(|\nabla \nabla u|^2+|\nabla \overline{\nabla}u|^2+|Rm|^2)\omega^m_t\le C, ~\forall t\in [\frac{1}{2},1].
\end{equation}
Then by Bochner formula and integration by parts, we have $L^4$-estimate
\begin{equation}
\int_M|\nabla \overline{\nabla}u|^4\omega^m_t\le C\int_M(|\nabla\nabla \overline{\nabla}u|^2+|\overline{\nabla}\nabla\nabla u|^2)\omega^m_t,
\end{equation}
and
\begin{equation}
\int_M|\nabla {\nabla}u|^4\omega^m_t\le C\int_M(|\nabla\nabla {\nabla}u|^2+|\nabla\nabla \overline{\nabla}u|^2+|\overline{\nabla}\nabla\nabla u|^2)\omega^m_t
\end{equation}
for all $t\in [\frac{1}{2},1]$. In order to prove the $L^4$-bound of Ricci curvature, it suffices to estimate the $L^2$-bound of the third derivatives of $u$. Actually, by integration by parts and (\ref{potest}), we have
\begin{equation}\label{eqn16}
\begin{split}
\int_M(|\nabla\nabla {\nabla}u|^2&+|\nabla\nabla \overline{\nabla}u|^2+|\overline{\nabla}\nabla\nabla u|^2)\omega^m_t\\
&\le C\int_{M}(|\nabla \Delta u|^2+|Rm|^2+|\nabla \nabla u|^2)\omega^m_t, ~~\forall t\in [\frac{1}{2},1].
\end{split}
\end{equation}
In \cite{TZha1}, by using the evolution equation of $|\nabla \Delta u|^2$ and (\ref{eqn16}) the authors can estimate the upper bound of $\int_{M}|\nabla \Delta u|^2\omega^m_t$. Thus we finish the proof where the constants $C$ above depending only on $R_0,~V_0,~d$ and $m$. One can find more details in \cite{TZha1}.
\end{proof}
By using the methods developed by Cheeger-Colding \cite{ChCo,ChCo97,ChCo00} and Cheeger-Colding-Tian \cite{ChCoTi02} and Petersen-Wei \cite{PeWe,PeWe1}, the authors in \cite{TZha1} can prove
\begin{lemma}(\cite{TZha1})
Let $\{(M_i,\omega^i)\}$ be a family of Fano manifolds  with complex dimension $m$ and
\begin{equation}
\int_{M_i}|\Ric|^p\le \Lambda.
\end{equation}
We further assume the non-collapsing, $\Vol(B_r(x))\ge \kappa r^{2m}$, for all $x\in M_i,~ r\le 1$, where $p>{m}$ and $\kappa,~\Gamma$ are uniform positive constants. Then there exists a subsequence such that $(M_i,\omega^i)$ is convergent in the pointed Gromov-Hausdorff topology
\begin{equation}
(M_i,\omega^i)\xrightarrow[]{d_{GH}} (M_{\infty},d),
\end{equation}
and the followings hold,\\
(i) $M_{\infty}=\mathcal {S}\cup \mathcal{R} $ such that the singular set $\mathcal{S}$ is a closed set of codimension $\ge 4$
and  $\mathcal{R}$ is convex in $M_{\infty}$;\\
(ii) There exists a $C^{\alpha},~ \forall \alpha <2-\frac{m}{p}$, metric $g_{\infty}$ on $\mathcal{R}$ which induces $d$; \\
(iii) $\omega^i$ converges to $g_\infty$ in the $C^{\alpha}$ topology on $\mathcal{R}.$\qed
\end{lemma}
Combining the $L^4$-bound estimate of Ricci curvature along the K\"{a}hler Ricci flow (Lemma \ref{L4estm}) and applying Perelman's pseudolocality theorem \cite{Pe} of Ricci flow and Shi's higher derivative estimate to curvature \cite{Shi}, with the same argument in section 3.3 of \cite{TZha1} we can show that the convergence is smooth on the regular set along the K\"{a}hler Ricci flow:
\begin{lemma}(\cite{TZha1})
With the same assumptions as Theorem \ref{bergman}, denote by $\omega_1^i$, the $1$ time slice of the K\"ahler Ricci flow starting from $\omega^i$. Then up to a subsequence we have
\begin{equation}(M,\omega^i_1)\xrightarrow[]{d_{GH}} (M_{\infty},d),
 \end{equation}
 and the limit $M_\infty$ is smooth outside a closed subset $\mathcal{S}$ of real codimension $\ge 4$ and $d$ is induced by a smooth K\"{a}hler metric $g_\infty$ on $M_{\infty}\backslash \mathcal{S}$. Moreover, $\omega_1^i$ converge to $g_\infty$ in $C^\infty$-topology outside $\mathcal{S}.$\qed
\end{lemma}
Now by noticing the estimate of Ricci potential,
\begin{equation}
||u(t)||_{C^0}+||\nabla u(t)||_{C^0}+||\Delta u(t)||_{C^0}\le C,~~\forall t\in [\frac{1}{2},1]
\end{equation}
and the smooth convergence of $(M,\omega^i_1)$, we can finish the proof of Theorem \ref{bergman} with the similar arguments as in \cite{T3,T5}, see also the proof of Theorem 5.1 in \cite{TZha1}. We remark that the
$C^1$ estimate to $u(t)$ will be used in the iteration arguments to cancel the bad
terms containing $\nabla \overline{\nabla} u(t)$.
\end{proof}

Since the nonnegative holomorphic bisectional curvature is preserved along the K\"{a}hler Ricci flow(see \cite{Mo}), then combining with the upper bound estimate of scalar curvature (Theorem \ref{main}), the diameter estimate (Lemma \ref{diame}) and Hamilton's compactness theorem(\cite{Ha}), we have
\begin{theorem}
Let $(M,\omega)$ be a Fano manifold of complex dimension $m\ge 1$, with nonnegative holomorphic bisectional curvature, volume lower bound $V_0>0$ and diameter upper bound $d$. Then for all $l\in \mathbb{N}_{+}$, the Bergman kernel $\rho_{\omega,l}$ has a uniform positive lower bound
\begin{equation}
\rho_{\omega,l}>c_l>0
\end{equation}
where the constant $c_{l}$ depends only on $m,~V_0,~d$ and $l$.\qed
\end{theorem}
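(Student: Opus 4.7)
The plan is to run the K\"ahler-Ricci flow starting from $\omega$ and transfer a Bergman kernel lower bound from the time-$1$ slice back to the initial metric. By Mok's invariance result, nonnegative holomorphic bisectional curvature is preserved along the K\"ahler-Ricci flow, so every $\omega_t$ in the flow satisfies $\Ric(\omega_t) \ge 0$, and via Berger's inequality the full curvature tensor is controlled by the scalar curvature. In particular the hypotheses of Theorem \ref{main} apply with Ricci lower bound $R_0 = 0$, which yields a scalar curvature bound $|R(\omega_1)| \le C(m, V_0, d)$. Combined with the diameter bound from Lemma \ref{diame} and the preservation of volume along the flow, the time-$1$ metric $\omega_1$ has uniformly bounded sectional curvature, uniformly bounded diameter, and $\Vol(M, \omega_1) \ge V_0$.

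I would then argue by contradiction, following the pattern used at the end of the proof of Theorem \ref{thm2}. Suppose a sequence $(M^i, \omega^i)$ satisfies the hypotheses but $\rho_{\omega^i, l}(x_i) \to 0$ for some fixed $l$ and points $x_i \in M^i$. Running the K\"ahler-Ricci flow for unit time produces a sequence $(M^i, \omega_1^i, x_i)$ with the uniform geometric bounds just noted, so by Hamilton's compactness theorem for Ricci flow a subsequence converges smoothly to a smooth compact K\"ahler manifold $(M_\infty, \omega_1^\infty, x_\infty)$, still with $[\omega_1^\infty] = 2\pi c_1(M_\infty)$ since the K\"ahler class is preserved by the flow. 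On this smooth limit $\rho_{\omega_1^\infty, l}(x_\infty) > 0$ by standard Bergman-kernel positivity, and $C^\infty$ convergence of the metrics and hermitian structures on $K_M^{-l}$ gives $\rho_{\omega_1^i, l}(x_i) \ge c > 0$ for all large $i$.

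Applying Theorem \ref{Beq} on the equivalence of Bergman kernels along the K\"ahler-Ricci flow then yields $\rho_{\omega^i, l}(x_i) \ge c/C_l > 0$ for large $i$, contradicting the assumption and producing the desired uniform lower bound depending only on $m, V_0, d, l$. The main technical point is verifying that the constant $C_l$ in Theorem \ref{Beq} is uniform across the family; since it depends on $R_0, V_0, d, l, m$ and the nonnegative bisectional hypothesis forces $R_0 = 0$ uniformly, this uniformity is automatic. A secondary check, of essentially the same flavor, is that the smooth convergence is strong enough to transfer pointwise positivity of the Bergman kernel, which follows from the fact that the space of $L^2$ sections has fixed finite dimension $N = \dim H^0(M, K_M^{-l})$ determined by the topology.
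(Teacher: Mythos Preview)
Your proposal is correct and follows essentially the same approach as the paper: Mok's preservation of nonnegative holomorphic bisectional curvature along the flow, the scalar curvature bound from Theorem~\ref{main} (which, together with nonnegativity of the bisectional curvature, controls the full curvature tensor), the diameter bound of Lemma~\ref{diame}, Hamilton's compactness theorem, and then Theorem~\ref{Beq} to transfer the Bergman kernel lower bound from $\omega_1$ back to $\omega$. You have simply spelled out the details that the paper compresses into a single sentence before the \qed.
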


\section*{Acknowledgments}
The author would like to thank his advisor Gang Tian for suggesting this problem and constant encouragement and several useful comments on an earlier version of this paper. I also like to thank Professor Zhenlei Zhang for helpful conversations about his joint paper with Tian. I also like to thank Feng Wang who
taught me so much about the partial $C^0$-estimate and for many helpful conversations. I also like to thank Huabin Ge for many helpful suggestions
on this topic and so many useful conversations.

\bibliographystyle{amsplain}

\end{document}